\newtheorem{theorem}{Theorem}[section]
\newtheorem{lemma}[theorem]{Lemma}
\newtheorem{remark}[theorem]{Remark}
\numberwithin{equation}{section}
\newcommand{\be}{\begin{equation}}
\newcommand{\ee}{\end{equation}}
\newcommand\bes{\begin{eqnarray}} \newcommand\ees{\end{eqnarray}}
\newcommand{\bess}{\begin{eqnarray*}}
\newcommand{\eess}{\end{eqnarray*}}
\newcommand{\bbbb}{\left\{\begin{aligned}}
\newcommand{\nnnn}{\end{aligned}\right.}
\newcommand{\bea}{\begin{align*}}
\newcommand{\eea}{\end{align*}}
\newcommand\ep{\varepsilon}
\newcommand\kk{\left}
\newcommand\rr{\right}
\newcommand\dd{\displaystyle}
\newcommand\dx{{\rm d}x}
\newcommand\dy{{\rm d}y}
\newcommand\yy{\infty}
\newcommand\sk{\smallskip}
\newcommand\rrr{\color{red}}
\begin{document}\thispagestyle{empty}
\setlength{\baselineskip}{16pt}

\begin{center}
 {\LARGE\bf Free boundary problems of a mutualist model with nonlocal diffusions\footnote{This work was supported by NSFC Grants
11771110, 11971128}}\\[4mm]
  {\Large Lei Li, \ \  Mingxin Wang\footnote{Corresponding author. {\sl E-mail}: mxwang@hit.edu.cn}}\\[1.5mm]
{School of Mathematics, Harbin Institute of Technology, Harbin 150001, PR China}
\end{center}

\date{\today}

\begin{abstract}
A mutualist model with nonlocal diffusions and a free boundary is first considered. We prove that this problem has a unique solution defined for $t\ge0$, and its dynamics are governed by a spreading-vanishing dichotomy. Some criteria for spreading and vanishing are also given. Of particular importance is that we find that the solution of this problem has quite rich longtime behaviors, which vary with the conditions satisfied by kernel functions and are much different from those of the counterpart with local diffusion and free boundary. At last, we extend these results to the model with nonlocal diffusions and double free boundaries.

\textbf{Keywords}: Nonlocal diffusion; free boundary; mutualist model; longtime behaviors

\textbf{AMS Subject Classification (2000)}: 35K57, 35R09,
35R20, 35R35, 92D25

\end{abstract}

\section{Introduction}
We investigate the following mutualist model with nonlocal diffusions and a free boundary
\bes\left\{\begin{aligned}\label{1.1}
&u_t=d_1\int_{0}^{\yy}J_1(x-y)u(t,y)\dy-d_1j_1(x)u+f_1(u,v), & &t>0,~x\in\overline{\mathbb{R}}^+,\\
&v_t=d_2\int_{0}^{h(t)}J_2(x-y)v(t,y)\dy-d_2j_2(x)v+f_2(u,v), & &t>0,~x\in[0,h(t)),\\
&v(t,x)=0,& &t>0, ~ x\in[h(t),\yy),\\
&h'(t)=\mu\int_{0}^{h(t)}\int_{h(t)}^{\infty}
J_2(x-y)v(t,x)\dy\dx,& &t>0,\\
&h(0)=h_0>0;\;\;u(0,x)=u_0(x),\;\; x\in\overline{\mathbb{R}}^+;\;\;v(0,x)=v_0(x),&&x\in[0,h_0]
 \end{aligned}\right.
 \ees
with
 \[f_1(u,v)=r_1u\kk(a-u-\frac{u}{1+bv}\rr),\;\;\;f_2(u,v)=r_2v\kk(1-v-\frac{v}{1+cu}\rr),\]
where all parameters are positive, $j_i(x)=\int_{0}^{\yy}J_i(x-y)\dy$ for $i=1,2$, $0<u_0\in C(\overline{\mathbb{R}}^+)\cap L^{\yy}(\overline{\mathbb{R}}^+)$, $v_0\in C([0,h_0])$ and $v_0(h_0)=0<v_0(x)$ in $[0,h_0)$. The kernel function $J_i$ for $i=1,2$ satisfy
 \begin{enumerate}[leftmargin=4em]
\item[{\bf(J)}]$J\in C(\mathbb{R})\cap L^{\yy}(\mathbb{R})$, $J\ge 0$, $J(0)>0,~\dd\int_{\mathbb{R}}J(x)\dx=1$, \ $J$\; is even.
 \end{enumerate}

Such mutualist model with random diffusions and the Stefan type boundary condition has been proposed and studied in \cite{LL,ZW}. The existence and uniqueness of global solution, longtime behaviors as well as criteria for spreading and vanishing have been established by a series of classical analysis. Like this, the application of the Stefan type problem in ecology has recently attracted much attention from many researchers over the past decades since the pioneering work of \cite{DL2010}. For example, one can refer to \cite{DHW,DG} for problems in high dimensional space, \cite{Wjfa16,DDL1,DDL2} for the model in time or space periodic setting, and \cite{GW,WZjdde14} for two species model.

Our interest in this paper is to study the dynamics of nonlocal diffusion problem \eqref{1.1}.  As is known to us, one of the advantages of nonlocal diffusion over the random diffusion is that it can be used to describe the movements of organisms between not only adjacent spatial locations but nonadjacent locations, which is expected to bring about some diverse dynamics; see, e.g. \cite{BCV,KLS}. To check the difference between dynamics of local and nonlocal diffusion models with free boundary, the authors of \cite{CDLL} and \cite{CQW} proposed the following nonlocal diffusion model with free boundaries
\bes\label{1.2}\left\{\begin{aligned}
&u_t=d\int_{g(t)}^{h(t)}J(x-y)u(t,y)\dy-du+f(u), & &t>0,~g(t)<x<h(t),\\
&u(t,x)=0,& &t>0, ~ x\notin(g(t),h(t)),\\
&h'(t)=\mu\int_{g(t)}^{h(t)}\int_{h(t)}^{\infty}
J(x-y)u(t,x)\dy\dx,& &t>0,\\
&g'(t)=-\mu\int_{g(t)}^{h(t)}\int_{-\infty}^{g(t)}
J(x-y)u(t,x)\dy\dx,& &t>0,\\
&h(0)=-g(0)=h_0>0,\;\; u(0,x)=u_0(x),& &|x|\le h_0,
 \end{aligned}\right.
 \ees
where $J$ satisfies {\bf(J)}. The reaction term $f(u)$ in \cite{CDLL} is of the Fisher-KPP type, while that in \cite{CQW} is identical to zero, which results in very different dynamics between these two models.

Since the above two works, lots of related researches have recently emerged. Du et.al \cite{DLZ} considered the spreading speed of free boundaries in \cite{CDLL} when spreading happens. They found an important condition on $J$, namely,
 \begin{enumerate}[leftmargin=4em]
\item[{\bf(J1)}]$\dd\int_{0}^{\yy}xJ(x)\dx<\yy$,
 \end{enumerate}
 so that there is a unique finite spreading speed for the free boundaries if and only if {\bf(J1)} holds for $J$. More precisely, they obtained that when $\lim_{t\to\yy}-g(t)=\lim_{t\to\yy}h(t)=\yy$,
  \begin{align*}
\lim_{t\to\yy}\frac{-g(t)}{t}=\lim_{t\to\yy}\frac{h(t)}{t}=\left\{\begin{aligned}
&C_0& &{\rm if~{\bf(J1)}~holds~for~}J,\\
&\yy& &{\rm if~{\bf(J1)}~is~not~satisfied},
\end{aligned}\right.
  \end{align*}
  where $C_0$ is uniquely given by the semi-wave problem
  \bes\label{1.3}\left\{\begin{array}{lll}
 d\dd\int_{-\yy}^{0}J(x-y)\phi(y)\dy-d\phi+c\phi'+f(\phi)=0, \quad -\yy<x<0,\\[1mm]
\phi(-\yy)=U^*,\ \ \phi(0)=0, \ \ c=\mu\dd\int_{-\yy}^{0}\int_{0}^{\yy}J(x-y)\phi(x)\dy\dx,
 \end{array}\right.
 \ees
with $U^*$ denoting the unique positive zero of $f(u)$.
 Moreover, Du and Ni \cite{DN21} not only extended the above results to monostable cooperative system, but also gave some accurate estimates on free boundary. In particular, if $J\approx|x|^{-\gamma}$ for $|x|\gg1$ and some $\gamma\in(1,2]$, that is, $C_1|x|^{-\gamma}\le J(x)\le C_2|x|^{-\gamma}$ for some $C_1,C_2>0$ and $|x|\gg1$, then $-g(t),h(t)\approx t^{1/(\gamma-1)}$ for $\gamma\in(1,2)$ and $-g(t),h(t)\approx t\ln t$ for $\gamma=2$. The high dimension and radial symmetry version was also considered in \cite{DN213}, where new difficulties arising from the kernel function $J(|x|)$ have been solved by using some delicate techniques. Among other things, there are other works about nonlocal diffusion problem with free boundaries; please see e.g. \cite{DWZ,DN20,DN200,WW2,ZZLD,LSW} and references therein.

Inspired by the deduction of model \eqref{1.2}, very recently Li and Wang \cite{LW21} put forward the following nonlocal diffusion model with a free boundary
  \bes\label{1.4}
\left\{\begin{aligned}
&u_t=d\dd\int_{0}^{h(t)}J(x-y)u(t,y)\dy-d\dd\left(\int_{0}^{\yy}J(x-y)\dy\right) u+f(u), && t>0,~0\le x<h(t),\\
&u(t,h(t))=0, && t>0,\\
&h'(t)=\mu\dd\int_{0}^{h(t)}\int_{h(t)}^{\infty}
J(x-y)u(t,x)\dy\dx, && t>0,\\
&h(0)=h_0,\;\; u(0,x)=u_0(x), &&x\in[0,h_0].
\end{aligned}\right.
 \ees
They proved that this problem has the similar dynamics with those in \cite{CDLL,DLZ}. Later on, Li and Wang \cite{LW211} obtained the following sharp estimates on solution of \eqref{1.4} when spreading happens
 \bess\left\{\begin{aligned}
&\lim_{t\to\yy}\max_{x\in[0,\,ct]}\big|u(t,x)-u^*\big|=0 ~ {\rm for ~ any ~ } c\in(0,c_0), ~ {\rm  ~ if ~ J ~ satisfies ~ {\bf(J1)}},\\
&\lim_{t\to\yy}\max_{x\in[0,\,ct]}\big|u(t,x)-u^*\big|=0 ~ {\rm for ~ any ~ } c>0, ~ {\rm  ~ if ~ J ~ violates ~ {\bf(J1)}},\\
&\lim_{t\to\yy}\max_{x\in[0,\,s(t)]}|u(t,x)- u^*|=0 ~ {\rm for ~ any } ~ 0\le s(t)=t^{\frac{1}{\gamma-1}}o(1) ~ ~ {\rm  ~ if ~}J\approx|x|^{-\gamma} ~ {\rm for ~ } \gamma\in(1,2),\\
&\lim_{t\to\yy}\max_{x\in[0,\,s(t)]}|u(t,x)- u^*|=0 ~ {\rm for ~ any } ~ 0\le s(t)=(t\ln t)o(1) ~ ~ {\rm  ~ if ~}J\approx|x|^{-2}.
\end{aligned}\right.\eess

Motivated by the above works, our goal here is to check whether the above results can be extended to problem \eqref{1.1} and its variation with double free boundaries. This paper is arranged as follows. Section 2 is devoted to longtime behaviors of \eqref{1.1}. In Section 3, criteria for spreading and vanishing are given. Section 4 deals with dynamics of the variation of \eqref{1.1} where $u$ is distributed through the whole space $\mathbb{R}$ and $v$ diffuses with spreading fronts $\{x=g(t)\}$ and $\{x=h(t)\}$.
\section{Longtime behaviors of \eqref{1.1}}
\setcounter{equation}{0} {\setlength\arraycolsep{2pt}
First of all, the following well-posedness of \eqref{1.1} can be obtained by the similar arguments in proof of \cite[Theorem 2.1]{CDLL} and \cite[Theorem 2.2]{CLWZ}, and thus its proof is omitted here.
\begin{theorem}\label{t2.1} Problem \eqref{1.1} has a unique global solution $(u,v,h)$, and $u,u_t\in C(\overline{\mathbb{R}}^+\times\overline{\mathbb{R}}^+)$, $v,v_t\in C(\overline{\mathbb{R}}^+\times [0,h(t)])$ and $h\in C^1([0,\yy))$. Moreover, the following estimates hold:
  \[0\le u\le K_1:=\max\{\|u_0\|_{L^{\yy}(\overline{\mathbb{R}}^+)}, \ a\}, \ \ \ 0\le v\le K_2:=\max\{\|v_0\|_{C([0,h_0])}, \ 1\}.\]
 \end{theorem}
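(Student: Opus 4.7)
The plan is to obtain local existence and uniqueness via a contraction mapping on the integral reformulation of \qq{1.1}, then to derive the stated a priori bounds by comparison with constant super-solutions, and finally to extend the local solution globally in time, following closely the template of \cite[Thm.~2.1]{CDLL} and \cite[Thm.~2.2]{CLWZ}. Moving $d_1 j_1(x) u$ and $d_2 j_2(x) v$ to the left-hand sides, the $u$- and $v$-equations become scalar linear ODEs in $t$ (with $x$ a parameter) whose right-hand sides are nonlocal integrals plus $f_i$. Variation of constants then rewrites the system as the coupled integral equations
\begin{align*}
u(t,x) &= e^{-d_1 j_1(x)t}u_0(x) + \int_0^t e^{-d_1 j_1(x)(t-s)}\Big[d_1\!\int_0^{\yy} J_1(x-y)u(s,y)\,dy + f_1(u,v)(s,x)\Big] ds,\\
v(t,x) &= e^{-d_2 j_2(x)t}\tilde v_0(x) + \int_0^t e^{-d_2 j_2(x)(t-s)}\Big[d_2\!\int_0^{h(s)} J_2(x-y)v(s,y)\,dy + f_2(u,v)(s,x)\Big] ds,
\end{align*}
with $\tilde v_0$ the extension of $v_0$ by zero, together with the free-boundary equation in integral form
\begin{align*}
h(t) = h_0 + \mu\int_0^t\int_0^{h(s)}\!\int_{h(s)}^{\yy} J_2(x-y) v(s,x)\,dy\,dx\,ds.
\end{align*}

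For a small $T>0$ I would fix the complete metric space consisting of triples $(u,v,h)$ with $u,v$ continuous and nonnegative, $u\le 2K_1$, $v\le 2K_2$, and $h\in C^1([0,T])$ with $h(0)=h_0$ and $|h'|$ uniformly bounded, and verify that the map sending $(u,v,h)$ to the right-hand sides above is a contraction in the $L^{\yy}$ metric. The Lipschitz estimates on $f_1, f_2$ over the bounded rectangle $[0,2K_1]\times[0,2K_2]$, the boundedness of the $J_i$, and a uniform a priori control $h'(t)\le \mu K_2 h(t)$ close the contraction for $T$ small, giving unique local existence.

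For the a priori bounds, the constants $\bar u\equiv K_1$ and $\bar v\equiv K_2$ are super-solutions: the linear nonlocal part vanishes on constants since $d_1\!\int_0^{\yy} J_1(x-y)K_1\,dy - d_1 j_1(x)K_1 = 0$, while $K_1\ge a$ gives $f_1(K_1,v)=r_1 K_1\big(a - K_1 - K_1/(1+bv)\big)\le 0$ for every $v\ge 0$, and $K_2\ge 1$ gives $f_2(u,K_2)\le 0$ for every $u\ge 0$. A comparison principle for nonlocal diffusion with free boundary, already developed in the cited references, then yields $u\le K_1$ and $v\le K_2$. Nonnegativity $u,v\ge 0$ is obtained similarly with $0$ as a sub-solution, after writing $f_1 = u\,g_1(u,v)$ and $f_2 = v\,g_2(u,v)$ with $g_i$ bounded on the relevant range and absorbing $g_i$ into an effective decay coefficient on the left.

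Finally, $v\le K_2$ together with $\int_{h(t)}^{\yy} J_2(x-y)\,dy\le 1$ gives $h'(t)\le \mu K_2 h(t)$, so $h$ grows at most exponentially and cannot blow up in finite time; the standard continuation argument then extends the solution to $[0,\yy)$, and the regularity $u,u_t,v,v_t\in C$, $h\in C^1$ can be read off the integral formulas. I expect the main technical obstacle to be the contraction step: the simultaneous dependence of both the integration domain $[0,h(s)]$ in the $v$-equation and the flux $h'$ on the unknown $h$ introduces couplings that force one to choose the metric on the space of triples very carefully so that the Lipschitz constants in $h$ remain controllable. This delicate bookkeeping is precisely what is carried out in \cite{CDLL,CLWZ}, which is why the proof can justifiably be omitted.
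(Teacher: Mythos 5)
Your overall strategy---integral reformulation via variation of constants, a contraction on a ball of bounded nonnegative triples, a priori bounds by constant super-solutions, and global extension via the differential inequality $h'(t)\le\mu K_2\,h(t)$---is exactly what the paper intends by deferring to \cite{CDLL,CLWZ}, and most of the individual steps are sound. In particular, the verifications that the nonlocal operator annihilates constants and that $f_1(K_1,v)\le 0$ for $K_1\ge a$, $f_2(u,K_2)\le 0$ for $K_2\ge 1$, are correct, as is the continuation argument once the a priori bounds are in hand.

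There is, however, a genuine gap in the integral identity you write for $v$. For a spatial point $x$ with $h_0<x<h(T)$, the $v$-equation in \eqref{1.1} is imposed only after the entry time $t_x:=\inf\{t:\,h(t)>x\}$; for $t<t_x$ the free-boundary condition forces $v(t,x)=0$, and continuity forces $v(t_x,x)=0$. Your formula integrates from $s=0$, and for $0\le s<t_x$ the nonlocal term $d_2\int_0^{h(s)}J_2(x-y)v(s,y)\,\dy$ is strictly positive even though $v(s,x)=0$. Hence any fixed point of the map you define would have $v(t_x,x)>0$; evaluating at $x=h(t)$ shows in the same way that the constraint $v(t,x)=0$ on $[h(t),\yy)$ would fail. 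The correct reformulation must carry the $h$-dependent lower limit $t_x$, equivalently insert the indicator $\mathbf{1}_{\{x<h(s)\}}$ into the integrand: for $x>h_0$ one needs
\[
v(t,x)=\int_{t_x}^{t}e^{-d_2 j_2(x)(t-s)}\Big[d_2\int_0^{h(s)}J_2(x-y)v(s,y)\,\dy+f_2(u,v)(s,x)\Big]{\rm d}s,\qquad t\ge t_x,
\]
and $v(t,x)\equiv0$ for $t<t_x$. This $h$-dependence in the \emph{domain of time integration} is exactly the delicate bookkeeping that \cite{CDLL,CLWZ} carry out; once it is in place the Lipschitz-in-$h$ estimates remain controllable and the contraction closes. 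As written, though, the fixed point of your map is not a solution of \eqref{1.1}.
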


By the equation of $h'$, $h(t)$ is increasing in $t>0$. Thus $h_{\yy}:=\lim_{t\to\yy}h(t)\in(0,\yy]$.  The case $h_{\yy}<\yy$ is called by vanishing case, while the other case $h_{\yy}=\yy$ is described as spreading case. The upcoming study of longtime behaviors of solution to \eqref{1.1} will be divided into these two cases.

 \subsection{Vanishing case: $h_{\yy}<\yy$}

Let $a_0$ and $l$ be positive constants. It is well-known that eigenvalue problem
 \bess
(\mathcal{L}_{(0,\,l)}+a_0) \phi:=d_2\int_{0}^{l}J_2(x-y)\phi(y)\dy-d_2j_2(x)\phi+a_0\phi=\lambda\phi ~ ~ \mbox{in}\;\;[0,l]
\eess
has a unique principal eigenvalue, denoted by $\lambda_p(\mathcal{L}_{(0,\,l)}+a_0)$, whose corresponding eigenfunction is positive in $[0,l]$. Please see \cite[Lemma 2.3]{LW21} for some properties of $\lambda_p(\mathcal{L}_{(0,\,l)}+a_0)$.

 Moreover, it follows from \cite[Proposition 1.3]{DLZ} that if $J_1$ satisfies the condition
 \begin{enumerate}[leftmargin=4em]
\item[{\bf(J2)}]$\dd\int_{0}^{\yy}J(x)e^{\lambda x}\dx<\yy$ for some $\lambda>0$,
 \end{enumerate}
 then one can find a unique $c_*>0$, usually called by the minimal wave speed, so that problem
 \bes\label{2.0}\left\{\begin{array}{lll}
 d_1\dd\int_{-\yy}^{\yy}J_1(x-y)\phi(y)\dy-d_1\phi+c\phi'+r_1\phi(a-2\phi)=0, \quad x\in\mathbb{R},\\[1mm]
\phi(-\yy)={a}/{2},\ \ \phi(\yy)=0, \ \ \phi'(x)\le0
 \end{array}\right.
 \ees
has a unique solution $\phi_c$ if and only if $c\ge c_*$. Below is the main result of this subsection.
\begin{theorem}\label{t2.2} If $h_{\yy}<\yy$, then $\lambda_p(\mathcal{L}_{(0,\,h_{\yy})}+r_2)\le 0$, $\lim_{t\to\yy}\|v(t,\cdot)\|_{C([0,h(t)])}=0$ and
 \bes\label{2.1}\left\{\begin{aligned}
&\lim_{t\to\yy}\max_{x\in[0,\,ct]}\kk|u(t,x)-a/2\rr|=0 ~ {\rm for ~ any ~ } c\in(0,c_*), ~ {\rm  ~ if ~ J_1 ~ satisfies ~ {\bf(J2)}},\\
&\lim_{t\to\yy}\max_{x\in[0,\,ct]}\kk|u(t,x)-a/2\rr|=0 ~ {\rm for ~ any ~ } c>0, ~ {\rm  ~ if ~ J_1 ~ violates ~ {\bf(J2)}},\\
&\lim_{t\to\yy}\max_{x\in[0,\,s(t)]}\kk|u(t,x)-a/2\rr|=0 ~ {\rm for ~ any } ~ 0\le s(t)=t^{\frac{1}{\gamma-1}}o(1) ~ ~ {\rm  ~ if ~}J_1\approx|x|^{-\gamma} ~ {\rm for ~ } \gamma\in(1,2),\\
&\lim_{t\to\yy}\max_{x\in[0,\,s(t)]}\kk|u(t,x)-a/2\rr|=0 ~ {\rm for ~ any } ~ 0\le s(t)=(t\ln t)o(1) ~ ~ {\rm  ~ if ~}J_1\approx|x|^{-2}.
\end{aligned}\right.\ees
 \end{theorem}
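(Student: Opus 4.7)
The plan is to establish the three assertions in sequence. The central idea is that in the vanishing regime $v$ must die out; once $v\to 0$ uniformly, the equation for $u$ asymptotically approaches the scalar logistic nonlocal-diffusion equation with carrying capacity $a/2$, and the four spreading statements in \eqref{2.1} then follow from the corresponding single-species theory of \cite{LW211}.

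For $\lambda_p(\mathcal{L}_{(0,h_\infty)}+r_2)\le 0$ I argue by contradiction. Assume the eigenvalue is strictly positive; by monotonicity of $\lambda_p$ in the domain length (\cite[Lemma 2.3]{LW21}) pick $t_0$ large so that $\lambda_p(\mathcal{L}_{(0,h(t_0))}+r_2)>0$, with positive principal eigenfunction $\phi$. Since $f_2(u,v)\ge r_2 v-2r_2 v^2$, a sufficiently small multiple $\delta\phi$ serves as a stationary lower solution of $v$'s equation on the slightly shrunken domain where $v(t_0,\cdot)\ge\delta\phi$; the comparison principle then yields a positive lower bound on $v$ over a compact subset of $(0,h(t_0))$ for all $t\ge t_0$. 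Inserting this into the $h'$-equation forces $h'(t)\ge c_1>0$, contradicting $h_\infty<\infty$. Given $\lambda_p\le 0$, the decay $v\to 0$ comes from extending $v$ by zero to $[0,h_\infty]$, using $f_2(u,v)\le r_2 v-r_2 v^2$, and comparing with the scalar logistic nonlocal problem on $[0,h_\infty]$; the non-positivity of the principal eigenvalue excludes any non-trivial non-negative bounded steady state, so the scalar solution decays to $0$ (the vanishing conclusion of \cite[Theorem 1.2]{LW21}), and hence $\|v(t,\cdot)\|_\infty\to 0$.

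Once $v\to 0$ uniformly, the estimates on $u$ follow from a sandwich. Fix $\epsilon>0$ and choose $T_\epsilon$ so that $\|v(t,\cdot)\|_\infty<\epsilon$ for $t\ge T_\epsilon$. The inequality $1/(1+bv)\ge 1-b\epsilon$ gives
\[r_1 u(a-2u)\le f_1(u,v)\le r_1 u(a+bK_1\epsilon-2u)\quad\text{for }t\ge T_\epsilon,\ x\ge 0.\]
The upper bound is handled by comparing $u$ with the spatially constant super-solution $y(t)$ solving $y'=r_1 y(a+bK_1\epsilon-2y)$; this is legitimate because $\int_0^\infty J_1(x-y)\,dy=j_1(x)$, so constants lie in the kernel of the nonlocal operator, and one obtains $\limsup_{t\to\infty}\sup_{x\ge 0}u(t,x)\le(a+bK_1\epsilon)/2\to a/2$ as $\epsilon\to 0$. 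For the lower bound I compare $u$ with a free-boundary sub-solution $(\underline u,\underline h)$ of the scalar problem \eqref{1.4} with kernel $J_1$, coefficient $\mu$ and nonlinearity $f(u)=r_1 u(a-2u)$, initialized at $t=T_\epsilon$ with $\underline h(T_\epsilon)=h_0'$ chosen large enough that spreading occurs, and with initial datum a small bump dominated by $u(T_\epsilon,\cdot)$ (which is strictly positive on compacts by the nonlocal strong maximum principle and the fact that $u_0>0$). The extra tail $\int_{\underline h(t)}^\infty J_1(x-y)u(t,y)\,dy\ge 0$ only reinforces the super-solution property, so $u(t,x)\ge\underline u(t,x)$ on $[0,\underline h(t)]$ by the standard comparison principle. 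Applying the four sharp estimates of \cite{LW211} to $\underline u$ then reproduces the four cases of \eqref{2.1}; combining with the upper bound and sending $\epsilon\to 0$ closes the proof.

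The main obstacle is the comparison step in the lower bound, since the equation for $u$ is posed on the fixed domain $[0,\infty)$ whereas $\underline u$ lives on $[0,\underline h(t)]$ and the two nonlocal operators integrate over different domains; the observation that resolves this is that restricting $u$'s integral to $[0,\underline h(t)]$ discards a non-negative term, so $u$ stays a super-solution of $\underline u$'s equation on $[0,\underline h(t)]$. A secondary subtlety is the critical case $\lambda_p(\mathcal{L}_{(0,h_\infty)}+r_2)=0$ in the decay of $v$: linear decay is unavailable, and one must rely on the quadratic loss term $-r_2 v^2$ in $f_2$ together with the absence of a nontrivial non-negative steady state to push $v$ to zero.
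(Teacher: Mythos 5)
Your outline is close to the paper's but has a genuine gap in the lower bound for $u$, and it is exactly at the point you seem most confident about. You propose comparing $u$ against the free-boundary subsolution $(\underline u,\underline h)$ of \eqref{1.4} with the \emph{fixed} coefficient $\mu$. The sharp estimates of \cite{LW211} for that free-boundary problem hold on $[0,ct]$ only for $c$ less than the \emph{free-boundary spreading speed} $c_0^\mu$, which depends on $\mu$ and in the case where {\bf(J1)} holds is strictly smaller than the traveling-wave speed $c_*$ appearing in \eqref{2.1}. So with coefficient frozen at $\mu$ you would only obtain the convergence on $[0,ct]$ for $c<c_0^\mu<c_*$ when {\bf(J2)} holds, and likewise fall short of ``all $c>0$'' when {\bf(J2)} fails but {\bf(J1)} holds. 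The key observation, which the paper uses and your argument omits, is that since the $u$-equation in \eqref{1.1} lives on all of $\overline{\mathbb R}^+$ with no free boundary, one is free to compare against a subsolution $(\underline u,l)$ of \eqref{1.4} whose coefficient $\kappa$ is \emph{arbitrary} (it need not be $\mu$). Then one invokes \cite[Theorems 5.1 and 5.2]{DLZ}: under {\bf(J2)} one has $\lim_{\kappa\to\infty}c_0^\kappa=c_*$, and under {\bf(J1)}$\setminus${\bf(J2)} one has $\lim_{\kappa\to\infty}c_0^\kappa=\infty$. Choosing $\kappa$ large enough for the given $c$ closes the gap. Without this limiting step your argument proves a strictly weaker version of \eqref{2.1}.

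The remaining parts are essentially the paper's proof with cosmetic variations. For $\lambda_p(\mathcal{L}_{(0,h_\infty)}+r_2)\le 0$ you use a stationary subsolution $\delta\phi$ on a slightly shrunken interval, while the paper lets the initial-value problem converge to a positive steady state via \cite[Lemma 2.6]{LW21}; both routes deliver a uniform positive lower bound on $v$ over a compact subinterval and then contradict $h_\infty<\infty$ through the $h'$-equation, so this is a matter of taste. Your decay argument for $v$, your upper bound on $u$ via the spatially constant ODE supersolution (valid because constants are annihilated by the operator $d_1\int_0^\infty J_1(x-\cdot)\,dy - d_1 j_1(x)$), and your remark that restricting $u$'s nonlocal integral to $[0,\underline h(t)]$ discards a nonnegative term and hence preserves the supersolution property, all match the paper's reasoning.
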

 \begin{proof}{\bf Step 1}: {\it The proof of $\lambda_p(\mathcal{L}_{(0,\,h_{\yy})}+r_2)\le 0$}. Assume indirectly that $\lambda_p(\mathcal{L}_{(0,\,h_{\yy})}+r_2)> 0$. It follows from {\bf(J)} that there are $\ep_0$ and $\sigma_0>0$ such that $J(x)>\sigma_0$ for $|x|\le \ep_0$. Then, by continuity, there exist $\ep\in(0,\ep_0/2)$ and $T>0$ such that $\lambda_p(\mathcal{L}_{(0,\,h_{\yy}-\ep)}+r_2)>0$ and $h(t)>h_{\yy}-\ep$ for $t\ge T$. Let $w$ be a solution of
\bess\left\{\begin{aligned}
&w_t= d_2\int_{0}^{h_{\yy}-\ep}J_2(x-y)w(t,y)\dy-dj_2(x)w+r_2w(1-2w), && t>T,\ 0\le x\le h_{\yy}-\ep,\\
&w(T,x)=v(T,x)  && 0\le x\le h_{\yy}-\ep.
 \end{aligned}\right.\eess
From a simple comparison argument, $v\ge w$ for $t\ge T$ and $x\in [0,h_{\yy}-\ep]$.
In view of $\lambda_p(\mathcal{L}_{(0,\,h_{\yy}-\ep)}+r_2)>0$ and \cite[Lemma 2.6]{LW21}, we have that $w$ converges to a positive steady state $W(x)$ uniformly in $[0,h_{\yy}-\ep]$ as $t\to\yy$. Thus there exists $T_1>T$ such that $v(t,x)\ge W(x)/2$ for $t\ge T_1$ and $x\in [0,h_{\yy}-\ep]$. Furthermore, we see that for $t>T_1$,
  \bess
h'(t)&=&\mu\dd\int_{0}^{h(t)}\int_{h(t)}^{\infty}
J_2(x-y)v(t,x)\dy\dx\ge\mu\dd\int_{h_{\yy}-\frac{\ep_0}{2}}^{h_{\yy}-\ep}\int_{h_{\yy}}^{h_{\yy}+\frac{\ep_0}{2}}
J_2(x-y)v(t,x)\dy\dx\\
&\ge&\mu\dd\int_{h_{\yy}-\frac{\ep_0}{2}}^{h_{\yy}-\ep}\int_{h_{\yy}}^{h_{\yy}+\frac{\ep_0}{2}}
\sigma_0 \frac{W(x)}{2}\dy\dx>0,
\eess
which contradicts to $h_{\yy}<\yy$. Consequently, $\lambda_p(\mathcal{L}_{(0,\,h_{\yy})}+r_2)\le0$.

{\bf Step 2}: {\it The proof of $\lim_{t\to\yy}\|v(t,\cdot)\|_{C([0,h(t)])}=0$}. Consider the following problem
\bess\left\{\begin{aligned}
&V_t= d_2\int_{0}^{h_{\yy}}J_2(x-y)V(t,y)\dy-dj_2(x)V+r_2V(1-V), && t>0,\ 0\le x\le h_{\yy},\\
&V(0,x)=K_2  && 0\le x\le h_{\yy}.
 \end{aligned}\right.\eess
The comparison principle indicates that $v\le V$ for $t\ge0$ and $0\le x\le h(t)$. By $\lambda_p(\mathcal{L}_{(0,\,h_{\yy})}+r_2)\le0$, we see from \cite[Lemma 2.6]{LW21} that $\lim_{t\to\yy}V(t,x)=0$ in $C([0,h_{\yy}])$. Thus, we complete this step.

 {\bf Step 3}: {\it The proof of \eqref{2.1}}. Clearly, for any small $\ep>0$ there is $T>0$ such that $v\le \ep$ for $t\ge T$ and $x\ge 0$. Thus $u$ satisfies
 \bess\left\{\begin{aligned}
&u_t\le d_1\int_{0}^{\yy}J_1(x-y)u(t,y)\dy-d_1j_1(x)u+f_1(u,\ep), && t>T,\ x\in \overline{\mathbb{R}}^+.\\
&u(T,x)\le K_1, && x\in \overline{\mathbb{R}}^+.
 \end{aligned}\right.\eess
A simple comparison principle arrives at $\limsup_{t\to\yy}u(t,x)\le a(1+b\ep)/(2+b\ep)$ uniformly in $\overline{\mathbb{R}}^+$. Thanks to the arbitrariness of $\ep$, we have
 \bes\label{2.2}\limsup_{t\to\yy}u(t,x)\le{a}/{2}\;\; ~ {\rm uniformly ~ in} ~ \overline{\mathbb{R}}^+.
 \ees
 Let $(\underline{u},l)$ be the unique solution of the following problem
 \bess
\left\{\begin{aligned}
&\underline u_t=d_1\dd\int_{0}^{l(t)}J_1(x-y)\underline u(t,y)\dy-d_1j_1(x)\underline u+r_1\underline u(a-2\underline u), && t>0,~0\le x<l(t),\\
&\underline u(t,l(t))=0, && t>0,\\
&l'(t)=\kappa\dd\int_{0}^{l(t)}\int_{l(t)}^{\infty}
J_1(x-y)\underline u(t,x)\dy\dx, && t>0,\\
&l(0)=l_0>0,\;\; \underline u(0,x)=\tilde{u}_0(x)\le u_0(x), &&x\in[0,l_0],
\end{aligned}\right.
 \eess
with $\kappa>0$, $\tilde{u}_0\in C([0,l_0])$ and $\tilde{u}_0(x)>0=\tilde{u}_0(l_0)$ in $[0,l_0)$. A comparison argument yields $u\ge \underline{u}$ in $[0,\yy)\times[0,\yy)$. By \cite[Theorem 4.3]{LW21} and \cite[Theroems 2.2 and 4.1]{LW211}, for large $\kappa>0$,
 \bes\label{2.3}\left\{\begin{aligned}
&\lim_{t\to\yy}\max_{x\in[0,\,ct]}\kk|\underline u(t,x)-a/2\rr|=0 ~ {\rm for ~ } c\in(0,c^{\kappa}_0), ~ ~ {\rm if ~ {\bf(J1)} ~ is ~ satisfied },\\
&\lim_{t\to\yy}\max_{x\in[0,\,ct]}\kk|\underline u(t,x)-a/2\rr|=0 ~ {\rm for ~ } c>0, ~ ~ {\rm if ~ {\bf(J1)} ~ is ~ violated},\\
&\lim_{t\to\yy}\max_{x\in[0,\,s(t)]}\kk|\underline u(t,x)-a/2\rr|=0 ~ {\rm for } ~  s(t)=t^{\frac{1}{\gamma-1}}o(1), ~ ~ {\rm if ~}J_1\approx|x|^{-\gamma} ~ {\rm with ~ } \gamma\in(1,2),\\
&\lim_{t\to\yy}\max_{x\in[0,\,s(t)]}\kk|\underline u(t,x)-a/2\rr|=0 ~ {\rm for } ~ s(t)=(t\ln t)o(1), ~ ~ {\rm if ~}J_1\approx|x|^{-2},
\end{aligned}\right.\ees
where $c^{\kappa}_0$ is uniquely given by \eqref{1.3} but with $(d,J,f,\mu)$ replaced by $(d_1,J_1,r_1\phi(a-2\phi),\kappa)$.

If {\bf(J2)} holds true for $J_1$, by \cite[Theorem 5.1]{DLZ}, we know $\lim_{\kappa\to\yy}c^{\kappa}_0=c_*$. Thus for any $c<c_*$ one can take $\kappa$ large enough such that $c<c^{\kappa}_0$. Together with comparison principle and the first result in \eqref{2.3}, we immediately derive
\bes\label{2.4}
\liminf_{t\to\yy}u(t,x)\ge {a}/{2} ~ ~ {\rm uniformly ~ in ~}[0,ct].\ees

If $J_1$ does not satisfies {\bf(J2)} but meets {\bf(J1)}, we learn from \cite[Theorem 5.2]{DLZ} that $\lim_{\kappa\to\yy}c^{\kappa}_0=\yy$. For any $c>0$, we choose $\kappa\gg 1$ such that $c<c^{\kappa}_0$. Similarly, \eqref{2.4} holds. If $J_1$ violates {\bf(J1)}, we can easily derive \eqref{2.4} by a comparison argument and the second result in \eqref{2.3}. If $J_1\approx|x|^{-\gamma}$ for $\gamma\in(1,2]$, by the last two conclusions in \eqref{2.3} we have $\liminf_{t\to\yy}u(t,x)\ge {a}/{2}$ uniformly in $[0,s(t)]$.
Then due to \eqref{2.2}, we get our desired conclusions. This finishes the proof.
 \end{proof}

\subsection{Spreading case: $h_{\yy}=\yy$}

In this subsection, we discuss the longtime behaviors of solution components $u$ and $v$ of \eqref{1.1} when spreading occurs. The following lemma is crucial to our discussion.
\begin{lemma}\label{l2.1} Assume that $s(t)$ is continuous in $[0,\yy)$ and strictly increasing to $\yy$, $s(0)=s_0>0$ and $P(x)$ satisfies {\bf(J)}. Let $d,\alpha,\beta$ be positive constants, and  $w$ be a unique solution of
\bess
\left\{\begin{aligned}
&w_t=d\dd\int_{0}^{s(t)}P(x-y)w(t,y)\dy-dp(x) w+w(\alpha-\beta w), && t>0,~0\le x<s(t),\\
&w(t,s(t))=0, && t>0,\\
&w(0,x)=w_0(x), &&0\le x\le s_0,
\end{aligned}\right.
 \eess
where $w_0\in C([0,s_0])$, $w_0(x)>0=w_0(s_0)$ in $[0,s_0)$ and $p(x)=\int_{0}^{\yy}P(x-y)\dy$. Then the followings hold true:

\sk{\rm(1)}\, $\lim_{t\to\yy}w(t,x)=\alpha/\beta$ locally uniformly in $\overline{\mathbb{R}}^+$.

\sk{\rm(2)}\, Suppose that $P$ satisfies {\bf(J1)} and $\liminf_{t\to\yy}s(t)/t:=\underline{s}\in(0,\yy]$.  Then
\bess\left\{\begin{aligned}
&\lim_{t\to\yy}\max_{x\in[0,\,ct]}|w(t,x)-{\alpha}/{\beta}|=0 ~ {\rm for ~ any ~ } c\in(0,\min\{\underline{s},C_*\}), ~ ~ {\rm if ~} P ~ {\rm  satisfies ~ {\bf(J2)} },\\
&\lim_{t\to\yy}\max_{x\in[0,\,ct]}|w(t,x)-{\alpha}/{\beta}|=0 ~ {\rm for ~ any ~ } c\in(0,\underline{s}), ~ ~ {\rm if ~} P ~ {\rm  violates ~ {\bf(J2)} },\end{aligned}\right.
\eess
where $C_*$ is the minimal speed of the traveling wave problem \eqref{2.0} with $(d_1,J,r_1\phi(a-2\phi))$ replaced by $(d,P,\phi(\alpha-\beta \phi))$.

\sk{\rm(3)}\, Suppose that $P$ does not satisfy {\bf(J1)} and $\liminf_{t\to\yy}s(t)/t=\underline{s}\in(0,\yy]$. Then
    \[\lim_{t\to\yy}\max_{x\in[0,\,ct]}|w(t,x)-{\alpha}/{\beta}|=0 ~ {\rm for ~ any ~ } c\in(0,\underline{s}).\]

\sk{\rm(4)}\, Suppose that there exist $C_1,C_2>0$,  $\gamma\in(1,2)$ and $\lambda\in(1,1/(\gamma-1)]$ such that $P(x)\ge C_1|x|^{-\gamma}$ for $|x|\gg1$ and $s(t)\ge C_2t^{\lambda}$ for $t\gg1$. Then
 \[\lim_{t\to\yy}\max_{x\in[0,\,t^{\theta}]}|w(t,x)-{\alpha}/{\beta}|=0 ~ {\rm for ~ any ~ } \theta\in(1,\lambda).\]

  \sk{\rm(5)}\, Suppose that there exist $C_1,C_2>0$ and $0<\eta\le1$ such that $P(x)\ge C_1|x|^{-2}$ for $|x|\gg1$ and $s(t)\ge C_2t(\ln t)^{\eta}$ for $t\gg1$.  Then
 \[\lim_{t\to\yy}\max_{x\in[0,\,t[\ln (t+1)]^{\omega}]}|w(t,x)-{\alpha}/{\beta}|=0 ~ {\rm for ~ any ~ } \omega\in(0,\eta).\]
\end{lemma}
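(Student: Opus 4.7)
The plan is to first establish Part (1) by a two-sided comparison (upper bound via the logistic ODE; lower bound via principal-eigenvalue theory on expanding bounded intervals), and then to reduce Parts (2)--(5) to the spreading profile results from \cite{DLZ, DN21, LW21, LW211} via sub-solutions constructed from auxiliary free boundary problems.

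For the upper bound in Part (1), the spatially constant function $\bar w(t)$ solving $\bar w'=\bar w(\alpha-\beta\bar w)$ with $\bar w(0)=\max\{\|w_0\|_\yy,\alpha/\beta\}$ is an upper solution, because $d\int_0^{s(t)}P(x-y)\dy\le dp(x)$ makes the nonlocal contribution nonpositive on a constant. Since $\bar w(t)\to\alpha/\beta$, this yields $\limsup_{t\to\yy}w(t,x)\le\alpha/\beta$ uniformly in $x$. For the lower bound, fix $L>0$ large enough that $\lambda_p(\mathcal L^{P}_{(0,L)}+\alpha)>0$ (possible by the monotonicity properties of $\lambda_p$ and the fact that $\alpha>0$). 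Since $s(t)\to\yy$, there is $T_L$ with $s(t)\ge L$ for $t\ge T_L$; then $w$ dominates the solution $W_L$ of the fixed-boundary problem on $[0,L]$ started at $T_L$ with any positive initial datum below $w(T_L,\cdot)$. By \cite[Lemma 2.6]{LW21}, $W_L\to W_L^*$ uniformly on $[0,L]$, where $W_L^*$ is the unique positive steady state; moreover $W_L^*\to\alpha/\beta$ locally uniformly as $L\to\yy$. Combined with the upper bound, this gives Part (1).

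For Parts (2)--(5) the common device is to choose an auxiliary free boundary problem of type \eqref{1.4} with kernel $Q\le P$, mobility $\kappa>0$, started at a late time $T$ on a small initial patch $[0,l_0]$ with datum $\underline w(T,\cdot)\le w(T,\cdot)$ (permitted by Part (1)), and tuned so that $l(t)\le s(t)$ persists. Standard comparison then gives $\underline w(t,x)\le w(t,x)$, and the sharp spreading profile of $\underline w$ from \cite[Theorems 2.2, 4.1]{LW211} transfers to $w$. Specifically: in Part (2) take $Q=P$ and pick $\kappa$ so $c_0^\kappa\in(c,\min\{\underline s,C_*\})$, using $c_0^\kappa\to C_*$ as $\kappa\to\yy$ from \cite[Theorem 5.1]{DLZ}; in Part (3), where $c_0^\kappa\to\yy$, replace $P$ by a truncated kernel $P_M(x)=P(x)\mathbf 1_{\{|x|\le M\}}$ (suitably normalized) which satisfies \textbf{(J1)}--\textbf{(J2)} and has minimal wave speed $C_*^M\to\yy$ as $M\to\yy$, and choose $(M,\kappa)$ so the resulting spreading speed lies in $(c,\underline s)$; in Parts (4) and (5) take $Q=P$ and invoke the precise asymptotics $l(t)\asymp t^{1/(\gamma-1)}$ (respectively $l(t)\asymp t\ln t$) from \cite[Theorem 4.1]{LW211}, the hypotheses $s(t)\ge C_2 t^{\lambda}$ or $s(t)\ge C_2 t(\ln t)^{\eta}$ securing $l(t)\le s(t)$ after adjusting $\kappa$, and the associated profile estimates delivering the stated convergence on $[0,t^\theta]$ and $[0,t(\ln(t+1))^\omega]$.

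The main obstacle is enforcing $l(t)\le s(t)$ uniformly in $t$, not merely asymptotically: the free-boundary comparison requires that the auxiliary boundary never catches the prescribed one, while one simultaneously wants $c_0^\kappa$ or $l(t)/s(t)$ as large as possible to maximize the reach of the spreading profile. The resolution is the kernel truncation (essential in Part (3), where the untruncated free boundary spreads super-linearly against a merely linear $s$) and the precise matching of polynomial/$t\ln t$ growth rates in Parts (4)--(5) via the estimates of \cite{LW211, DN21}; once $l\le s$ is secured, the transfer of convergence through comparison is straightforward.
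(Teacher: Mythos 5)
Your sketch of Part (1) is reasonable (the paper simply cites the analogous argument in \cite[Theorem 3.9]{CDLL}), and your plan for Parts (2)--(3) is morally in the same spirit as the paper, except that the paper does not pass through an auxiliary free boundary problem: it constructs a moving sub-solution \emph{directly} from a semi-wave profile, namely $\underline w(t,x)=(1-\ep)\phi_{c^\zeta_0}(x-c_1t-2K)$, with $c_1<\min\{\underline s,c^\zeta_0\}$, and handles non-compactly-supported kernels by truncating $P$ (this is exactly your truncation idea, and it is what drives Part (3) since the truncated speeds blow up). Your route for (2)--(3) could be made to work but is strictly more work: you must also verify spreading for the auxiliary Stefan problem and rule out the transient where its boundary $l(t)$ overtakes $s(t)$, both of which the explicit traveling sub-solution sidesteps for free.

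However, your approach to Parts (4) and (5) has a genuine gap. With $Q=P$ the auxiliary free boundary $l(t)$ of \eqref{1.4} spreads, by \cite{DN21,LW211}, at rate \emph{at least} $t^{1/(\gamma-1)}$ (respectively $t\ln t$), because the hypothesis $P(x)\ge C_1|x|^{-\gamma}$ is only a lower bound on the tail. The hypothesis on the prescribed boundary is only $s(t)\ge C_2 t^{\lambda}$ with $\lambda\le 1/(\gamma-1)$ (respectively $s(t)\ge C_2 t(\ln t)^\eta$ with $\eta\le 1$). If, say, $\lambda<1/(\gamma-1)$ and $s(t)$ is \emph{exactly} $C_2 t^\lambda$, then $l(t)\gg s(t)$ for large $t$, and no choice of $\kappa$, starting time, or initial patch cures this, since $\kappa$ affects only the multiplicative constant in $l(t)$, not its polynomial order. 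Switching to a truncated kernel to slow $l$ down is no better: a compactly supported kernel yields $l(t)\asymp t$, and the profile estimates from \cite{LW211} then cover only $[0,ct]$ with $c<c_0^\kappa$, which falls short of the target set $[0,t^\theta]$ with $\theta>1$.

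The paper avoids this trap by abandoning auxiliary free boundary problems in (4)--(5) altogether. It uses an \emph{explicit, tunable} sub-solution: for (4) it sets $l(t)=(K_1t+K_2)^{\theta_1}$ and $\underline w(t,x)=K_\ep\min\{1,2(l(t)-x)/l(t)\}$ with $\theta<\theta_1<\lambda$, then verifies the differential inequalities pointwise, choosing $K_1$ small so that the support growth $\underline w_t$ is dominated by the nonlocal term $\gtrsim l(t)^{1-\gamma}$. Crucially $\theta_1$ sits \emph{strictly between} $\theta$ and $\lambda$: this intermediate exponent secures both $l(t)\le s(t)$ and $t^\theta/l(t)\to 0$, and it is precisely the degree of freedom that is unavailable when $l$ is the free boundary of a Stefan problem whose spreading rate is dictated by the tail of $P$. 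Part (5) is analogous with $l(t)=K_1(t+K_2)[\ln(t+K_2)]^{\omega_1}$ and $\omega<\omega_1<\eta$. Without an explicit profile of this sort, I do not see how to bridge the gap between the hypothesis on $s(t)$ and the speed of an auxiliary nonlocal free boundary.
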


\begin{proof} \sk{\rm(1)}\, This proof is similar to that of \cite[Theorem 3.9]{CDLL}, and the details are thus omitted here.

\sk{\rm(2)}\, Firstly, a comparison argument shows that $\limsup_{t\to\yy}w(t,x)\le\alpha/\beta$ uniformly in $\overline{\mathbb{R}}^+$. It is thus enough to show that
 \bes
\liminf_{t\to\yy}w(t,x)\ge\alpha/\beta\;\;\;\text{ uniformly in}\;\;[0,ct].
 \label{x.1}\ees

{\bf Step 1}:\, {\it Proof of the assertion with {\bf(J2)} satisfied by $P$}.

We first assume that $P$ has a compact support, say ${\rm supp} P\subset [-K,K]$ for some $K>0$. By the assumption and conclusion (1), for small $\ep>0$ and $c<c_1<\min\{\underline{s},C_*\}$, there is $T>0$ such that $s(t)>c_1t+2K$ and $w(t,x)\ge (1-\ep)\alpha/{\beta}$ for $t\ge T$ and $x\in[0,2K]$. Then $w$ satisfies
\bess
\left\{\begin{aligned}
&w_t\ge d\dd\int_{0}^{c_1t+2K}P(x-y)w(t,y)\dy-dp(x)w+w(\alpha-\beta w), && t>T,~0\le x\le c_1t+2K,\\
&w(t,c_1t+2K)>0, && t>T,\\
&w(T,x)>0, &&0\le x\le c_1T+2K.
\end{aligned}\right.
 \eess
 Define $\tilde{w}(t,x)=w(t+T,x)$ for $t>0$ and $0\le x\le c_1t+2K$. Then
\bess
\left\{\begin{aligned}
&\tilde w_t\ge d\dd\int_{0}^{c_1t+2K}P(x-y)\tilde w(t,y)\dy-dp(x)\tilde w+\tilde w(\alpha-\beta \tilde w), && t>0,~0\le x\le c_1t+2K,\\
&\tilde w(t,c_1t+2K)>0, && t>0,\\
&\tilde w(0,x)>0, &&0\le x\le 2K.
\end{aligned}\right.
 \eess
Since $P$ satisfies {\bf(J2)}, by \cite[Theorems 1.2 and 5.1]{DLZ}, the semi-wave problem \eqref{1.3} with $(P,\phi(\alpha-\beta \phi),\zeta)$ in place of $(J,f,\mu)$ has a unique solution pair $(c^{\zeta}_0,\phi_{c^{\zeta}_0})$ for any $\zeta>0$ and $\lim_{\zeta\to\yy}c^{\zeta}_0=C_*$. Thus $c_1<\min\{\underline{s},c^{\zeta}_0\}$ by choosing $\zeta$ large sufficiently. For such $\zeta$, we set
 \[\underline{w}=(1-\ep)\phi_{c^{\zeta}_0}(x-c_1t-2K),\;\;t>0,\; 0\le x\le c_1t+2K.\]
We shall check that the following inequalities hold true:
 \bes\label{2.5}
\left\{\begin{aligned}
&\underline w_t\le d\dd\int_{0}^{c_1t+2K}P(x-y)\underline w(t,y)\dy-dp(x)\underline w+\underline w(\alpha-\beta \underline w), && t>0,~K\le x\le c_1t+2K,\\
&\underline w(t,c_1t+2K)=0, && t>0,\\
&\underline w(t,x)\le \tilde{w}(t,x), && t>0, ~ 0\le x\le K,\\
&\underline w(0,x)\le \tilde{w}(0,x), &&0\le x\le 2K.
\end{aligned}\right.
 \ees
Once it is done, then $\underline w(t,x)\le \tilde{w}(t,x)=w(t+T,x)$ for $t>0$ and $0\le x \le c_1t+2K$ by the comparison principle. On the other hand,
  \[\max_{x\in[0,ct]}\big|\underline{w}(t,x)-(1-\ep)\alpha/{\beta}\big|
  =(1-\ep)\left(\alpha/{\beta}-\phi_{c^{\zeta}_0}(ct-c_1t-2K)\right)\to0 ~ {\rm as } ~t\to\yy.\]
Thus, $\liminf_{t\to\yy}w(t,x)\ge (1-\ep){\alpha}/{\beta}$ uniformly in $[0,ct]$. The arbitrariness of $\ep$ implies \eqref{x.1}.

Now we verify \eqref{2.5}. Obviously, $\underline w(t,c_1t+2K)=0$ and $\underline{w}(t,x)\le(1-\ep)\alpha/{\beta}\le w(t+T,x)=\tilde{w}(t,x)$ for $t\ge0$ and $0\le x\le K$, and $\underline w(0,x)\le \tilde{w}(0,x)$ for $0\le x\le 2K$. It thus remains to verify the first inequality of \eqref{2.5}. As $c_1<c^{\zeta}_0$, it follows that, for $t>0$ and $K\le x\le c_1t+2K$,
\bess
\underline{w}_t&=&-(1-\ep)c_1{\phi_{c^{\zeta}_0}}'(x-c_1t-2K)\le-(1-\ep)c^{\zeta}_0{\phi_{c^{\zeta}_0}}'(x-c_1t-2K)\\
&=&(1-\ep)\left(d\dd\int_{-\yy}^{ c_1t+2K}\!\!P(x-y)\phi_{c^{\zeta}_0}(y-c_1t-2K)\dy-d\phi_{c^{\zeta}_0}(x-c_1t-2K)+\phi_{c^{\zeta}_0}(\alpha-\beta \phi_{c^{\zeta}_0})\right)\\
&\le& d\dd\int_{0}^{c_1t+2K}P(x-y)\underline w(t,y)\dy-dp(x)\underline w+ \underline w(\alpha-\beta \underline w).
\eess

Now we turn to the case where $P$ does not have a compact support. Define
\[\xi(x)=1 ~\;\text{for}\; |x|\le1, ~ ~ ~ \xi(x)=2-|x| ~\; \text{for}\; 1\le|x|\le2, ~ ~ \xi(x)=0 ~\;\text{for}\; |x|\ge2,\]
and $P_n(x)=\xi(\frac{x}{n})P(x)$. Clearly, $P_n$ are supported compactly and nondecreasing in $n$, $P_n(x)\le P(x)$, $P_n(x)\to P(x)$ in  $L^1(\mathbb{R})$ and locally uniformly in $\mathbb{R}$ as $n\to\yy$. Direct calculations show that
\[p_n(x):=\int_{-x}^{\yy}P_n(y)\dy\to p(x)=\int_{-x}^{\yy}P(y)\dy ~ ~ {\rm uniformly~in~}\mathbb{R}{\rm ~as~} n\to\yy.\]
Hence, for any $\delta\in(0,\alpha)$, there is $N_{\delta}>0$ such that $d(p_n(x)-p(x))+\delta>0$ for $x\ge0$ and $n\ge N_{\delta}$. Take $\hat{d}_n=d\|P_n\|_{L^1(\mathbb{R})}, \hat{P}_n=P_n\|P_n\|^{-1}_{L^1(\mathbb{R})}$ and $\hat{p}_n=p_n\|P_n\|^{-1}_{L^1(\mathbb{R})}$. Let $w_n$ be the unique solution of
\bess
\left\{\begin{aligned}
&w_{nt}=\hat{d}_n\dd\int_{0}^{s(t)}\hat P_n(x-y)w_n(t,y)\dy-\hat {d}_n\hat{p}_n(x) w_n+w_n(\alpha-\delta-\beta w_n), && t>0,~0\le x<s(t),\\
&w_n(t,s(t))=0, && t>0,\\
&w_n(0,x)=w_0(x), &&0\le x\le s_0.
\end{aligned}\right.
 \eess
A comparison argument shows that $w(t,x)\ge w_n(t,x)$ in $\overline{\mathbb{R}}^+\times [0,s(t)]$ for any $n\ge N_{\delta}$. Clearly, the following semi-wave problem
\bes\label{2.6}\left\{\begin{array}{lll}
 \hat{d}_n\dd\int_{-\yy}^{0}\hat P_n(x-y)\phi(y)\dy-\hat d_n\phi+c\phi'+\phi(\alpha-\delta-\beta \phi)=0, \quad -\yy<x<0,\\[1mm]
\phi(-\yy)=(\alpha-\delta)/{\beta},\ \ \phi(0)=0, \ \ c=\zeta\dd\int_{-\yy}^{0}\int_{0}^{\yy}\hat P_n(x-y)\phi(x)\dy\dx
 \end{array}\right.
 \ees
has a unique solution pair $(c^{\zeta}_{n,\delta},\phi^{\zeta}_{n,\delta})$. From Step 2 in the proof of \cite[Theorem 3.15]{LW21} we know $\lim_{n\to\yy}c^{\zeta}_{n,\delta}=c^{\zeta}_{0,\delta}$ which is determined uniquely by problem \eqref{2.6} with $(\hat{d}_n,\hat P_n)$ replaced by $(d,P)$.
Moreover, since $\lim_{\delta\to0}c^{\zeta}_{0,\delta}=c^{\zeta}_0$ and $\lim_{\zeta\to\yy}c^{\zeta}_0=C_*$, for any $c<c_1<\min\{\underline{s},C_*\}$, there are large $\zeta>0$ and small $\delta_0<\alpha$ such that $c<c_1<\min\{\underline{s},c^{\zeta}_{0,\delta}\}$ for any $\delta\in(0,\delta_0)$. For such $\zeta$ and $\delta$, we can find some $N_1>N_{\delta}$ such that  $c_1<\min\{\underline{s},c^{\zeta}_{N_1,\delta}\}$. Similarly to the above we can deduce $\liminf_{t\to\yy}w_{N_1}(t,x)\ge (\alpha-\delta)/{\beta}$ uniformly in $[0,ct]$. The arbitrariness of $\delta$ implies \eqref{x.1}.

{\bf Step 2}:\, {\it Proof of the result with {\bf(J2)} violated by $P$}.

Similarly, we are going to construct an adequate lower solution by using the solution of a semi-wave problem. Define $w_n$ and $c^{\zeta}_{n,\delta}$ as above. Noting $\lim_{n\to\yy}c^{\zeta}_{n,\delta}=c^{\zeta}_{0,\delta}$ and $\lim_{\zeta\to\yy}c^{\zeta}_{0,\delta}=\yy$, for any $c<c_1<\underline{s}$, there are large $N_1$ and $\zeta$ such that $c<c_1<c^{\zeta}_{N_1,\delta}$. Hence we can define a lower solution $\underline{w}_{N_1}=(1-\ep)\phi^{\zeta}_{N_1,\delta}(x-c_1t-2N_1)$. Then, arguing as in the above analysis, one can easily derive the desired result.

\sk{\rm(3)}\,  It clearly suffices to show \eqref{x.1}. We first deal with the case $\liminf_{t\to\yy}s(t)/t<\yy$. Since $P$ does not satisfies {\bf(J1)}, similarly to the proof of \cite[Proposition 5.1]{DN213} we have $\lim_{n\to\yy}c^{\zeta}_{n,\delta}=\yy$ for any $\delta\in(0,\alpha)$. Set $N_1\gg 1$ such that $\underline{s}<c^{\zeta}_{N_1,\delta}$. As in the proof of conclusion (2), we obtain $\liminf_{t\to\yy}w_{N_1}(t,x)\ge (\alpha-\delta)/{\beta}$ uniformly in $[0,ct]$ for all $c\in(0,\underline{s})$. As before, \eqref{x.1} holds.

 Now we handle the case $\lim_{t\to\yy}s(t)/t=\yy$. Obviously, for any $c_1>0$ there exists $T>0$ such that $s(t)>c_1t+s_0$ for $t\ge T$. Define $\tilde{w}(t,x)=w(t+T,x)$ with $t\ge0$ and $0\le x\le c_1t+s_0$. Then,
 \bess
\left\{\begin{aligned}
&\tilde w_{t}\ge d\dd\int_{0}^{c_1t+s_0}P(x-y)\tilde w(t,y)\dy-d p(x)\tilde w+\tilde w(\alpha-\beta \tilde w), && t>0,~0\le x\le c_1t+s_0,\\
&\tilde w(t,c_1t+s_0)>0, && t>0,\\
&\tilde w(0,x)>0, &&0\le x\le s_0.
\end{aligned}\right.
 \eess
 Let $\underline{w}$ be a solution of
 \bess
\left\{\begin{aligned}
&\underline w_{t}=d\dd\int_{0}^{c_1t+s_0} P(x-y)\underline w(t,y)\dy-dp(x)\underline w+\underline w(\alpha-\beta \underline w), && t>0,~0\le x\le c_1t+s_0,\\
&\underline w(t,c_1t+s_0)=0, && t>0,\\
&\underline w(0,x)=\underline{w}_0(x)\le\tilde w(0,x), &&0\le x\le s_0.
\end{aligned}\right.
 \eess
By the above conclusion with $\underline{s}\in(0,\yy)$ and a comparison argument, we get  \eqref{x.1}.

\sk{\rm(4)}\, Take $\theta_1\in(\theta,\lambda)$. We shall verify that there exist constants  $\ep,K_1,K_2, T_0>0$ such that functions defined by $l(t)=(K_1t+K_2)^{\theta_1}$ and $\underline{w}(t,x)=K_{\ep}\min\{1,2(l(t)-x)/{l(t)}\}$ with $K_{\ep}=\alpha/\beta-\sqrt\ep$ satisfy, for any $T>T_0$,
   \bes\label{2.7}
\left\{\begin{aligned}
&\underline w_t\le d\dd\int_{0}^{l(t)}P(x-y)\underline w(t,y)\dy-dp(x)\underline w+ \underline w(\alpha-\beta\underline{w}), && t>0,~x\in[0,l(t))\setminus\left\{{l(t)}/{2}\right\},\\
&\underline w(t,l(t))\le0,&& t>0,\\
&\underline w(0,x)\le w(T,x),&&x\in[0,K_2^{\theta_1}].
\end{aligned}\right.
 \ees
Once \eqref{2.7} is obtained, as $\lambda>\theta_1$ and $s(t)\ge Ct^{\lambda}$ for $t\gg1$, we can find $T_1>T_0$ such that $s(t)>l(t)$ for $t\ge T_1$. Then the function  $\tilde{w}(t,x)=w(t+T_1,x)$ satisfies
\bess
\left\{\begin{aligned}
&\tilde w_t\ge d\dd\int_{0}^{l(t)}P(x-y)\tilde w(t,y)\dy-dp(x)\tilde w+\tilde w(\alpha-\beta \tilde w), && t>0,~0\le x\le l(t),\\
&\tilde w(t,l(t))>0, && t>0,\\
&\tilde w(0,x)=w(T_1,x)\ge\underline{w}(0,x), &&0\le x\le K_2^{\theta_1}.
\end{aligned}\right.
 \eess
The comparison principle indicates $\tilde{w}(t,x)\ge\underline{w}(t,x)$ for $t\ge0$ and $0\le x\le l(t)$. Since
 \bess
\max_{x\in[0,\,t^{\theta}]}|\underline w(t,x)- {\alpha}/{\beta}+\sqrt{\ep}|=({\alpha}/{\beta}-\sqrt{\ep})\left(1-\min\left\{1,\,2(l(t)-t^{\theta})/{l(t)}\right\}\right)\to0 ~ ~ {\rm as} ~ t\to\yy,
\eess
it follows that $\liminf_{t\to\yy}w(t,x)\ge{\alpha}/{\beta}-\sqrt{\ep}$ uniformly in $[0,t^{\theta}]$.
Thanks to the arbitrariness of $\ep$, we easily derive the desired result.

Thus it remains to show that \eqref{2.7} holds. Although the following analysis as well as that of conclusion (5) are similar to those of \cite[Theorem 4.1]{LW211}, we give the details for the convenience of readers. Beginning with proving the first inequality of \eqref{2.7}, we easily see $\underline{w}(t,x)\ge K_{\ep}(1-{x}/{l(t)})$ for $t\ge0$ and $x\in[0,l(t)]$. For $x\in [{l(t)}/{4},l(t)]$, we have that, when $K_2$ is large,
\bess
\int_{0}^{l(t)}P(x-y)\underline w(t,y)\dy&=&\int_{-x}^{l(t)-x}P(y)\underline w(t,x+y)\dy\\
 &\ge&\int_{-{l(t)}/{4}}^{-{l(t)}/{8}}P(y)\underline w(t,x+y)\dy\\
 &\ge& K_{\ep}\int_{-{l(t)}/{4}}^{-{l(t)}/{8}}\frac{C_1}{|y|^{\gamma}}
 \left(1-\frac{x+y}{l(t)}\right)\dy\\
 &\ge&\frac{K_{\ep}}{l(t)}\int_{-{l(t)}/{4}}^{-{l(t)}/{8}}\frac{C_1}{|y|^{\gamma}}(-y)\dy\ge\tilde C_1K_{\ep}l^{1-\gamma}(t),
  \eess
where $\tilde{C_1}$ depends only on $P$. For $x\in [0,{l(t)}/{4}]$, we have
\bess
\int_{0}^{l(t)}P(x-y)\underline w(t,y)\dy&\ge&\int_{0}^{{l(t)}/{2}}P(x-y)\underline w(t,y)\dy\\
&=&K_{\ep}\int_{0}^{{l(t)}/{2}}P(x-y)\dy\\
&\ge& K_{\ep}(p(x)-\ep)+K_{\ep}\left(\ep-\int_{{l(t)}/{4}}^{\yy}P(y)\dy\right)\\
&\ge&  K_{\ep}(p(x)-\ep)= (p(x)-\ep)\underline{w}(t,x)
\eess
provided that $K_2\gg1$. Moreover, for $x\in [0,{l(t)}/{4}]$,
\bes\label{2.8}
\underline w(\alpha-\beta\underline{w})\ge \frac{\alpha}{2}\min\{\underline w,\,\frac{\alpha}{\beta}-\underline w\}\ge \frac{\alpha}{2}\min\{K_{\ep},\,\frac{\alpha}{\beta}-\underline w\}\ge \frac{\alpha}{2}\sqrt{\ep} ~ \;{\rm ~ if ~ } \ep\le \frac{\alpha^2}{4\beta^2}.
\ees
From \cite[Proposition 4.3]{LW211}, by sending $L_2=l(t)$ and $L_1=l(t)/2$ with $K_2\gg1$, we obtain
\[\int_{0}^{l(t)}P(x-y)\underline w(t,y)\dy\ge(1-\ep^2)\underline{w}(t,x) ~ ~ ~ {\rm for} ~ t>0, ~ x\in[{l(t)}/{4},l(t)].\]
Additionally, it is easy to see
\bes\label{2.9}
\underline w(\alpha-\beta\underline{w})\ge \frac{\alpha}{2}\min\{\underline w,\,\frac{\alpha}{\beta}-\underline w\}\ge \frac{\alpha}{2}\ep\underline{w} ~ ~ {\rm if } ~ \ep\le \min\{1,\left(\frac{\beta}{\alpha}\right)^2\}.
\ees
Therefore, we can obtain that for $x\in [0,{l(t)}/{4}]$,
\bess
d\dd\int_{0}^{l(t)}P(x-y)\underline w(t,y)\dy-dp(x)\underline w+ \underline w(\alpha-\beta\underline{w})\ge-d\ep\underline{w}+\frac{\alpha}{2}\sqrt{\ep}
\ge-d\ep \frac{\alpha}{\beta}+\frac{\alpha}{2}\sqrt{\ep}\ge0\eess
if $\ep\le(\frac{\beta}{2d})^2$,
and for $x\in [{l(t)}/{4},l(t)]$,
\bess
&&d\dd\int_{0}^{l(t)}P(x-y)\underline w(t,y)\dy-dp(x)\underline w+\underline w(\alpha-\beta\underline{w})\\
&&\ge d\dd\int_{0}^{l(t)}P(x-y)\underline w(t,y)\dy-\left(d-\frac{\alpha}{2}\ep\right)\underline w(t,x)\\
&&=\left[\left(\min\left\{\frac{\alpha\ep}{4},d\right\}+\left\{d-\frac{\alpha\ep}{4}\right\}^+\right)\int_{0}^{l(t)}P(x-y)\underline w(t,y)\dy\right]-\left(d-\frac{\alpha\ep}{2}\right)\underline w(t,x)\\
&&\ge\min\left\{\frac{\alpha\ep}{4},d\right\}\tilde C_1 K_{\ep}l^{1-\gamma}(t)+\left\{d-\frac{\alpha\ep}{4}\right\}^+(1-\ep^2)\underline{w}(t,x)-\left(d-\frac{\alpha\ep}{2}\right)\underline w(t,x)\\
 &&\ge\min\left\{\frac{\alpha\ep}{4},d\right\}\tilde C_1 K_{\ep}l^{1-\gamma}(t)\eess
if $\ep\le\frac{\alpha}{4d}$. On the other hand, we have $\underline{w}_t(t,x)=0$ for $t>0$ and $x\in[0,{l(t)}/{2}]$. For $t>0$ and $x\in({l(t)}/{2},l(t))$, noticing $\theta_1<{1}/(\gamma-1)$ and $K_2\gg1$, we obtain
\[\underline{w}_t(t,x)=2K_{\ep}\frac{xl'(t)}{l^2(t)}\le\frac{2\theta_1 K_1K_{\ep}}{K_1t+K_2}\le\min\left\{\frac{\alpha\ep}{4},d\right\}\frac{\tilde C_1 K_{\ep}}{(K_1t+K_2)^{\theta_1(\gamma-1)}} ~ ~ {\rm if } ~ K_1\le \frac{\tilde C_1\min\left\{\frac{\alpha\ep}{4},d\right\}}{2\theta_1}.\]
The first inequality of \eqref{2.7} holds. The second inequality is obvious. Now we focus on the last one. For $\ep$ and $K_2$ as chosen above, by conclusion (1), there is $T_0>0$ such that $\underline{w}(0,x)\le K_{\ep}=\alpha/\beta-\sqrt{\ep}\le w(T,x)$ for $T\ge T_0$ and $0\le x\le K_2^{\theta_1}$.
Thus conclusion (4) follows.

\sk{\rm(5)}\, For $0<\omega<\omega_1<\eta$ and small $\ep>0$, we define $l(t)=K_1(t+K_2)\left[\ln(t+K_2)\right]^{\omega_1} $ and $\underline{w}(t,x)=K_{\ep}\min\big\{1,(l(t)-x)/{(t+K_2)^{1/2}}\big\}$ with $K_{\ep}={\alpha}/{\beta}-\sqrt{\ep}$ and $K_1,K_2>0$. We prove that there are $K_1,K_2$ and $T_0>0$ such that
  \bes\label{2.10}
\left\{\begin{aligned}
&\underline w_t\le d\dd\int_{0}^{l(t)}P(x-y)\underline w(t,y)\dy-dp(x)\underline w+ \underline w(\alpha-\beta\underline{w}), \\
&\hspace{35mm} t>0,~x\in[0,l(t))\setminus\big\{l(t)-(t+K_2)^{{1}/{2}}\big\},\\
&\underline w(t,l(t))=0,\hspace{12mm} t>0,\\
&\underline w(0,x)\le w(T,x),\hspace{4mm} T>T_0, ~ x\in[0,K_1K_2\left(\ln K_2\right)^{\omega_1}].
\end{aligned}\right.
\ees
Once \eqref{2.10} is obtained, by the assumption on $s(t)$ we can find a $T_1>T_0$ such that $s(t)>l(t)$ for $t\ge T_1$. We again define $\tilde{w}(t,x)=w(t+T_1,x)$, and then $\tilde{w}$ satisfies
\bess
\left\{\begin{aligned}
&\tilde w_t\ge d\dd\int_{0}^{l(t)}P(x-y)\tilde w(t,y)\dy-dp(x)\tilde w+\tilde w(\alpha-\beta \tilde w), && t>0,~0\le x\le l(t),\\
&\tilde w(t,l(t))>0, && t>0,\\
&\tilde w(0,x)=w(T_1,x)\ge\underline{w}(0,x), &&0\le x\le K_1K_2\left(\ln K_2\right)^{\omega_1}.
\end{aligned}\right.
 \eess
 By comparison principle, $\tilde{w}(t,x)\ge\underline{w}(t,x)$ for $t\ge0$ and $0\le x\le l(t)$. Moreover, as $t\to\yy$,
 \bess
\max_{x\in[0,\,t[\ln (t+1)]^{\omega}]}|\underline w(t,x)- {\alpha}/{\beta}+\sqrt{\ep}|=({\alpha}/{\beta}-\sqrt{\ep})\left(1-\min\left\{1,\,2\left(l(t)-t[\ln (t+1)]^{\omega}\right)/{l(t)}\right\}\right)\to0,
\eess
which, as well as the arbitrariness of $\ep$, implies $\liminf_{t\to\yy}w(t,x)\ge{\alpha}/{\beta}$  uniformly in $[0,t[\ln (t+1)]^{\omega}]$.

Now we are ready to prove the first inequality of \eqref{2.10}. Clearly, $l(t)(t+K_2)^{-1/2}\to\yy$ uniformly in $t\ge0$ as $K_2\to0$, and $\underline{w}(t,x)\ge K_{\ep}\frac{l(t)-x}{2(t+K_2)^{1/2}}$ for $x\in[l(t)-2(t+K_2)^{1/2},l(t)]$. Thus when $K_2$ is large enough, we see that, for $x\in[l(t)-(t+K_2)^{1/2},l(t)]$,
\bess
\int_{0}^{l(t)}P(x-y)\underline w(t,y)\dy&=&\int_{-x}^{l(t)-x}P(y)\underline w(t,x+y)\dy\\
 &\ge&\frac{K_{\ep}}{2}\int_{-(t+K_2)^{1/2}}^{-(t+K_2)^{1/4}}P(y)\frac{l(t)-x-y}{(t+K_2)^{1/2}}\dy\\
&\ge&\frac{K_{\ep}}{2}\int_{-(t+K_2)^{1/2}}^{-(t+K_2)^{1/4}}P(y)\frac{-y}{(t+K_2)^{1/2}}\dy\\
 &\ge&\frac{K_{\ep}C_1}{2}\int_{-(t+K_2)^{1/2}}^{-(t+K_2)^{1/4}}\frac{(-y)^{-1}}{(t+K_2)^{1/2}}\dy=\frac{K_{\ep}C_1\ln(t+K_2)}{8(t+K_2)^{1/2}}.
\eess
Similarly, we also can obtain that, for $x\in[\left(l(t)-(t+K_2)^{1/2}\right)/{2},l(t)-(t+K_2)^{1/2}]$,
\bess
\int_{0}^{l(t)}P(x-y)\underline w(t,y)\dy&=&\int_{-x}^{l(t)-x}P(y)\underline w(t,x+y)\dy\\
 &\ge& K_{\ep}\int_{-(t+K_2)^{1/2}}^{-(t+K_2)^{1/4}}P(y)\dy\\
 &\ge&K_{\ep}C_1\int_{-(t+K_2)^{1/2}}^{-(t+K_2)^{1/4}}(-y)^{-2}\dy\ge\frac{K_{\ep}C_1\ln(t+K_2)}{4(t+K_2)^{1/2}}.
\eess
Moreover, it can be seen from \cite[Proposition 4.3]{LW211} with $L_2=l(t)$ and $L_1=(t+K_2)^{1/2}$ that
\[\int_{0}^{l(t)}P(x-y)\underline w(t,y)\dy\ge(1-\ep^2)\underline w(t,x) ~ ~ ~ {\rm for} ~ t>0, ~ x\in\kk[\left(l(t)-(t+K_2)^{1/2}\right)/{2},l(t)\rr].\]
By similar arguments in the proof of conclusion (4) we have that, for $x\in[0,\left(l(t)-(t+K_2)^{1/2}\right)/{2}]$,
\[\int_{0}^{l(t)}P(x-y)\underline w(t,y)\dy\ge(p(x)-\ep)\underline{w}(t,x),\]
which, together with \eqref{2.8}, leads to
\[d\dd\int_{0}^{l(t)}P(x-y)\underline w(t,y)\dy-dp(x)\underline w+ \underline w(\alpha-\beta\underline{w})\ge-d\ep \frac{\alpha}{\beta}+\frac{\alpha}{2}\sqrt{\ep}\ge0 ~ ~ {\rm if } ~ \ep\le\left(\frac{\beta}{2d}\right)^2.\]
For $x\in[\left(l(t)-(t+K_2)^{1/2}\right)/{2},l(t)]$, we have
\bess
&&d\dd\int_{0}^{l(t)}P(x-y)\underline w(t,y)\dy-dp(x)\underline w+ \underline w(\alpha-\beta\underline{w})\\
&&\ge d\dd\int_{0}^{l(t)}P(x-y)\underline w(t,y)\dy-\left(d-\frac{\alpha\ep}{2}\right)\underline w(t,x)\\
&&=\left[\left(\min\left\{\frac{\alpha\ep}{4},d\right\}+\left\{d-\frac{\alpha\ep}{4}\right\}^+\right)\int_{0}^{l(t)}P(x-y)\underline w(t,y)\dy\right]-\left(d-\frac{\alpha\ep}{2}\right)\underline w(t,x)\\
&&\ge\min\left\{\frac{\alpha\ep}{4},d\right\}\frac{K_{\ep}C_1\ln(t+K_2)}{8(t+K_2)^{1/2}}+\left\{d-\frac{\alpha\ep}{4}\right\}^+(1-\ep^2)\underline{w}(t,x)-\left(d-\frac{\alpha\ep}{2}\right)\underline w(t,x)\\
&&\ge\min\left\{\frac{\alpha\ep}{4},d\right\}\frac{K_{\ep}C_1\ln(t+K_2)}{8(t+K_2)^{1/2}}\; ~ ~ {\rm if } ~ \ep\le\frac{\alpha}{4d}.
\eess
On the other hand, we have $\underline{w}_t(t,x)=0$ for $t>0$ and $x\in[0,l(t)-(t+K_2)^{1/2})$, and for $x\in(l(t)-(t+K_2)^{1/2},l(t)]$,
\[\underline{w}_t(t,x)\le\frac{\ln(t+K_2)}{(t+K_2)^{1/2}}(K_1K_{\ep}+K_1K_{\ep}\omega_1)\le\min\left\{\frac{\alpha\ep}{4},d\right\}\frac{K_{\ep}C_1\ln(t+K_2)}{8(t+K_2)^{1/2}}.\]
 Thus we obtain the first inequality of \eqref{2.10} if $K_1\le \frac{C_1\min\left\{\frac{\alpha\ep}{4},d\right\}}{8(1+\omega_1)}$. Moreover, for $K_1$ and $K_2$ as above, by conclusion (1) we can choose $T_0>0$ such that $\underline{w}(0,x)\le K_{\ep}=\alpha/\beta-\sqrt{\ep}\le w(T,x)$ with $T\ge T_0$ and $0\le x\le K_1K_2\left(\ln K_2\right)^{\omega_1}$. Therefore, all assertions are proved.
\end{proof}
 Let $(u^*,v^*)$ be the unique positive root of $f_1(u,v)=0$ and $ f_2(u,v)=0$.
By \cite[Theorem 1.2]{DLZ}, the semi-wave problem \eqref{1.3}, where $(d,J,f)$ is replaced with $(d_2,J_2,r_2\phi(1-2\phi))$,
 has a unique solution pair $(c_0,\phi_0)$ with $c_0>0$ and $\phi_0$ nonincreasing in $(-\yy,0]$ if and only if $J_2$ satisfies {\bf(J1)}.
 \begin{theorem} \label{t2.3}If $h_{\yy}=\yy$, then we have the following conclusions:

 \sk{\rm(1)}\, $\lim_{t\to\yy}u(t,x)=u^*$ and $\lim_{t\to\yy}v(t,x)=v^*$ locally uniformly in $\overline{\mathbb{R}}^+$.

 \sk{\rm(2)}\, If $J_1$ satisfies {\bf(J2)} and $J_2$ satisfies {\bf(J1)}, then
 \[\lim_{t\to\yy}\max_{x\in[0,\,ct]}\left\{|u(t,x)-u^*|+|v(t,x)-v^*|\right\}=0 ~ {\rm for ~ any ~ } c\in(0,\min\{c_*,c_0\}).\]

 \sk{\rm(3)}\, If $J_1$ violates {\bf(J2)} and $J_2$ satisfies {\bf(J1)}, then
 \[\lim_{t\to\yy}\max_{x\in[0,\,ct]}\left\{|u(t,x)-u^*|+|v(t,x)-v^*|\right\}=0 ~ {\rm for ~ any ~ } c\in(0,c_0).\]

  \sk{\rm(4)}\, If $J_1$ meets {\bf(J2)} and $J_2$ violates {\bf(J1)}, then
 \[\lim_{t\to\yy}\max_{x\in[0,\,ct]}\left\{|u(t,x)-u^*|+|v(t,x)-v^*|\right\}=0 ~ {\rm for ~ any ~ } c\in(0,c_*).\]

  \sk{\rm(5)}\, If $J_1$ violates {\bf(J2)} and {\bf(J1)} does not hold for $J_2$, then
 \[\lim_{t\to\yy}\max_{x\in[0,\,ct]}\left\{|u(t,x)-u^*|+|v(t,x)-v^*|\right\}=0 ~ {\rm for ~ any ~ } c>0.\]

 \sk{\rm(6)}\, If there is $C>0$ such that $\min\{J_1(x),J_2(x)\}\ge C|x|^{-\gamma}$ for $|x|\gg1$ and $\gamma\in(1,2)$. Then
 \[\lim_{t\to\yy}\max_{x\in[0,\,t^\theta]}\left\{|u(t,x)-u^*|+|v(t,x)-v^*|\right\}=0 ~ {\rm for ~ any ~ } 1<\theta<1/(\gamma-1).\]

 \sk{\rm(7)}\, If there is $C>0$ such that $\min\{J_1(x),J_2(x)\}\ge C|x|^{-2}$ for $|x|\gg1$. Then
 \[\lim_{t\to\yy}\max_{x\in[0,\,t[\ln (t+1)]^{\omega}]}\left\{|u(t,x)-u^*|+|v(t,x)-v^*|\right\}=0 ~ {\rm for ~ any ~ } 0<\omega<1.\]
 \end{theorem}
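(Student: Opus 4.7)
The plan is to combine the scalar estimates of Lemma~\ref{l2.1} with a bootstrap iteration exploiting the cooperative structure of \eqref{1.1}: $f_1$ is nondecreasing in $v$ and $f_2$ is nondecreasing in $u$. The argument splits into three stages.

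\emph{Stage 1 (Growth rate of $h(t)$).} Since $v/(1+cu)\le v$ for $u\ge 0$, one has $f_2(u,v)\ge r_2v(1-2v)$, so $(v,h)$ is a super-solution of the scalar nonlocal free-boundary problem \eqref{1.4} with $(d,J,f)=(d_2,J_2,r_2v(1-2v))$. Comparison together with the spreading-speed results of \cite{DLZ,LW21,LW211,DN21} yields $\liminf_{t\to\yy}h(t)/t\ge c_0$ when \textbf{(J1)} holds for $J_2$, $\liminf_{t\to\yy}h(t)/t=\yy$ when \textbf{(J1)} is violated, $h(t)\ge Ct^{1/(\gamma-1)}$ when $J_2\approx|x|^{-\gamma}$ with $\gamma\in(1,2)$, and $h(t)\ge Ct\ln t$ when $J_2\approx|x|^{-2}$. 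In every case this is exactly the hypothesis required by parts (2)--(5) of Lemma~\ref{l2.1}.

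\emph{Stage 2 (Initial lower bounds).} Using $f_1(u,v)\ge r_1u(a-2u)$ and the free-boundary sub-solution for $u$ constructed in Step~3 of the proof of Theorem~\ref{t2.2}, one obtains $\liminf_{t\to\yy} u(t,x)\ge a/2$ uniformly on the moving window determined by $J_1$ (of size $c_*t$ when \textbf{(J2)} holds for $J_1$, arbitrary $ct$ otherwise, and the polynomial variants as in \eqref{2.1}). Simultaneously, from $f_2(u,v)\ge r_2v(1-2v)$ together with Lemma~\ref{l2.1} applied with $\alpha=\beta=1$ and the growth rate of $h(t)$ from Stage~1, one obtains $\liminf_{t\to\yy} v(t,x)\ge 1/2$ uniformly on the window determined by $J_2$. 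Intersecting the two windows gives exactly the window sizes stated in (2)--(7).

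\emph{Stage 3 (Bootstrap to $(u^*,v^*)$).} Define $\underline u_0=a/2$, $\underline v_0=1/2$, and recursively
\[\underline v_{n+1}=\frac{1+c\underline u_n}{2+c\underline u_n},\qquad \underline u_{n+1}=\frac{a(1+b\underline v_{n+1})}{2+b\underline v_{n+1}}.\]
These sequences increase, are bounded above by $(a,1)$, and converge to the unique positive root $(u^*,v^*)$ of $f_1=f_2=0$. Inductively, once $\liminf u\ge\underline u_n-\ep$ on a window, the reaction in the $v$-equation is bounded below by $r_2v(1-v-v/(1+c(\underline u_n-\ep)))$; Lemma~\ref{l2.1} then yields $\liminf v\ge\underline v_{n+1}-\ep$ on a slightly smaller window, and feeding this back and reusing the sub-solution from Step~3 of Theorem~\ref{t2.2} gives $\liminf u\ge\underline u_{n+1}-\ep$. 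A symmetric downward iteration started from $(K_1,K_2)$ produces matching upper bounds, so $(u,v)\to(u^*,v^*)$ uniformly on the final window. Conclusion (1) is the localised version in which one invokes Lemma~\ref{l2.1}(1) throughout instead of the moving-window parts.

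The chief difficulty is organising the bootstrap so that each invocation of Lemma~\ref{l2.1} is legal: the window on which the previous step's lower bound is uniformly valid must strictly contain the window on which we apply the comparison. We handle this by working on a nested sequence of windows (say $[0,c_n t]$ with $c_n\nearrow c$, or $[0,t^{\theta_n}]$ with $\theta_n\nearrow\theta$, and analogously for the $\ln$-correction case in (7)); since only finitely many bootstrap iterations are needed to approach any prescribed error $\ep$ to $(u^*,v^*)$, the shrinkage at each step is harmless.
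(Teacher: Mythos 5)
Your plan coincides almost exactly with the paper's proof: use $f_2(u,v)\ge r_2 v(1-2v)$ to transfer the scalar free-boundary growth estimates to $h(t)$, exploit the cooperative structure of $(f_1,f_2)$ to build the increasing iterates
\[
\underline u_1=\frac a2,\qquad \underline v_n=\frac{1+c\underline u_n}{2+c\underline u_n},\qquad \underline u_{n+1}=\frac{a(1+b\underline v_n)}{2+b\underline v_n}\ \longrightarrow\ (u^*,v^*),
\]
and at each step invoke Lemma~\ref{l2.1} on a moving window that shrinks slightly, choosing the window speeds inside $(c,\min\{c_*,c_0\})$ (or the corresponding polynomial/log ranges for (6)--(7)) so that all steps remain valid on $[0,ct]$. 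For the upper bound the paper compares once with the kinetic ODE system $(U',V')=(f_1(U,V),f_2(U,V))$, $(U(0),V(0))=(K_1,K_2)$, and uses phase-plane analysis to get $(U,V)\to(u^*,v^*)$; this is a bit lighter than your ``symmetric downward iteration'' but delivers the same conclusion. For parts (4)--(5), where $J_2$ violates (J1), the paper first shows $\lim_{t\to\yy}h(t)/t=\yy$ by comparing with the auxiliary scalar free-boundary problem \eqref{2.14} started from $(v(T,\cdot),h(T))$ with $T$ large enough to ensure spreading; your Stage~1 amounts to the same thing.

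One technical imprecision is worth flagging. For the $u$-update inside the bootstrap you propose ``reusing the sub-solution from Step~3 of the proof of Theorem~\ref{t2.2}.'' That construction is a free-boundary sub-solution with the fixed crude reaction $r_1 u(a-2u)$, so it can only yield $\liminf u\ge a/2=\underline u_1$ and not $\underline u_{n+1}$ for $n\ge1$. If you try to upgrade its reaction to $f_1(\cdot,\underline v_n-\ep)$, you must also arrange that its own expanding front $l(t)$ stays inside the (strictly smaller) window on which $v\ge\underline v_n-\ep$ has already been established, which is not automatic once $\kappa$ is taken large. The paper avoids this entirely: at every bootstrap stage, for both $u$ and $v$, it restricts to a truncated domain of the form $[0,c_nt+h_0]$ on which the improved reaction bound holds, observes that the restricted solution is a super-solution of the Lemma~\ref{l2.1} problem there, and reads off the lower bound from Lemma~\ref{l2.1}(2)--(5). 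You already use this mechanism for $v$; using it for $u$ as well closes the gap and makes the argument uniform across all steps and across cases (2)--(7).
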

\begin{proof}
\sk{\rm(1)}\, This conclusion can be proved by arguing as in the proof of \cite[Theorem 4.11]{LSW} with some obvious modifications, and thus the details are omitted.

\sk{\rm(2)}\, Let $(U,V)$ be the unique solution of problem
  \[U_t=f_1(U,V),\ \ V_t=f_2(U,V); \ \ \ U(0)=K_1,\ \ V(0)=K_2.\]
The comparison principle implies $u(t,x)\le U(t)$ and $v(t,x)\le V(t)$ for $t\ge0$ and $x\ge0$. It follows from a simple phase-plane analysis that $\lim_{t\to\yy}U(t)=u^*$ and $\lim_{t\to\yy}V(t)=v^*$. Thus
\bes\label{2.11}\limsup_{t\to\yy}u(t,x)\le u^* ~ {\rm and} ~ \limsup_{t\to\yy}v(t,x)\le v^* ~ {\rm uniformly ~ in ~ }\overline{\mathbb{R}}^+.\ees

Since $r_1u(a-u-\frac{u}{1+bv})\ge r_1u(a-2u)$, we can derive from comparison principle and \cite[Theorems 4.3 and 4.4]{LW21}
that $\liminf_{t\to\yy}h(t)/t\ge c_0$. For any $c\in(0,\min\{c_*,c_0\})$, we choose a strictly decreasing sequence $\{c_n\}$ with $c_n\in(c,\min\{c_*,c_0\})$. Clearly, $u$ satisfies
\bess\left\{\begin{aligned}
&u_t\ge d_1\int_{0}^{h(t)}J_1(x-y)u(t,y)\dy-d_1j_1(x)u+r_1u(a-2u), & &t>0,~0\le x<h(t),\\
&u(t,h(t))>0,& &t>0,\\
&u(0,x)=u_0(x)>0,& &0\le x\le h_0.
 \end{aligned}\right.
 \eess
By a comparison consideration and the conclusion (2) of Lemma \ref{l2.1}, we obtain $\liminf_{t\to\yy}u(t,x)\ge a/2:=\underline{u}_1$ uniformly in $[0,c_1t]$.
For small $\ep>0$, there is $T>0$ such that $h(t)>c_2t+h_0$ and $u(t,x)\ge \underline{u}_1-\ep$ for $t\ge T$ and $x\in[0,c_2t+h_0]$. Hence $v$ satisfies
\bess\left\{\begin{aligned}
&v_t\ge d_2\int_{0}^{c_2t+h_0}\!\!J_2(x-y)v(t,y)\dy-d_2j_2(x)v+f_2(\underline{u}_1-\ep,\,v), & &t>T, ~ x\in[0, c_2t+h_0],\\
&v(t,c_2t+h_0)>0,& &t>T,\\
&v(T,x)>0,& &x\in[0,c_2T+h_0].
 \end{aligned}\right.
 \eess
Define $\tilde{v}(t,x)=v(t+T,x)$ with $t\ge0$ and $0\le x\le c_2(t+T)+h_0$. Then
  \bess\left\{\begin{aligned}
&\tilde v_t\ge d_2\int_{0}^{c_2t+h_0}\!\!J_2(x-y)\tilde v(t,y)\dy-d_2j_2(x)\tilde v
+f_2(\underline{u}_1-\ep,\,\tilde v), & &t>0, ~ x\in[0, c_2t+h_0],\\
&\tilde v(t,c_2t+h_0)>0,& &t>0,\\
&\tilde v(0,x)>0,& &x\in[0,h_0].\\
 \end{aligned}\right.
 \eess
Again by the conclusion (2) of Lemma \ref{l2.1} and comparison argument we get
$\liminf_{t\to\yy}v(t,x)\ge \frac{1+c(\underline{u}_1-\ep)}{2+c(\underline{u}_1-\ep)}$ uniformly  in  $[0,c_3t]$. The arbitrariness of $\ep$ implies $\liminf_{t\to\yy}v(t,x)\ge \frac{1+c\underline{u}_1}{2+c\underline{u}_1}:=\underline{v}_1$ uniformly in $[0,c_3t]$.
 For small $\ep>0$, there is $T>0$ such that $v(t,x)\ge \underline{v}_1-\ep$ in $[T,\yy)\times[0,c_4t+h_0]$.
 Then $u$ satisfies
 \bess\left\{\begin{aligned}
&u_t\ge d_1\int_{0}^{c_4t+h_0}\!\!J_1(x-y)u(t,y)\dy-d_1j_1(x)u+f_1(u,\,\underline{v}_1-\ep), & &t>T, ~ x\in[0, c_4t+h_0],\\
&u(t,c_4t+h_0)>0,& &t>T,\\
&u(T,x)>0,& &x\in[0,c_4T+h_0].
 \end{aligned}\right.
 \eess
Similar to the above we can deduce $\liminf_{t\to\yy}u(t,x)\ge \frac{a(1+b\underline{v}_1)}{2+b\underline{v}_1}:=\underline{u}_2$ uniformly in $[0,c_5t]$.
Repeating the above arguments we can find two sequences $\{\underline{u}_n\}$ and $\{\underline{v}_n\}$ such that $\liminf_{t\to\yy}u(t,x)\ge \underline{u}_n$, $\liminf_{t\to\yy}v(t,x)\ge \underline{v}_n$ uniformly in $[0,ct]$, and
$\{\underline{u}_n\}$, $\{\underline{v}_n\}$ satisfy
  \[\underline{u}_1=\frac{a}{2}, ~\; \underline{v}_n=\frac{1+c\underline{u}_n}{2+c\underline{u}_n}, ~ \; \underline{u}_{n+1}=\frac{a(1+b\underline{v}_n)}{2+b\underline{v}_n},\;\;\;n=1,2,\cdots.\]
  Moreover, it is easy to show that $\underline{u}_n$ and $\underline{v}_n$ are both increasing in $n$ and $\underline{u}_n\le a$ and $\underline{v}_n\le1$. Thus we can define $\underline{u}_\yy=\lim_{n\to\yy}\underline{u}_n$ and $\underline{v}_\yy=\lim_{n\to\yy}\underline{v}_n$ with
  \[\underline{v}_\yy=\frac{1+c\underline{u}_\yy}{2+c\underline{u}_\yy}, ~ \;\; \underline{u}_{\yy}=\frac{a(1+b\underline{v}_\yy)}{2+b\underline{v}_\yy}.\]
  By the uniqueness of positive root, we have $\underline{u}_\yy=u^*$ and $\underline{v}_\yy=v^*$. So assertion (2) is proved.

   \sk{\rm(3)}\, Similarly to the above analysis we can make use of conclusions (2) and (3) of Lemma \ref{l2.1} to construct the same sequences $\{\underline{u}_n\}$ and $\{\underline{v}_n\}$, so that for any $c\in(0,c_0)$, $\liminf_{t\to\yy}u(t,x)\ge \underline{u}_n$ and $\liminf_{t\to\yy}v(t,x)\ge \underline{v}_n$ uniformly in $[0,ct]$.
   By \eqref{2.11} and the above discussion on $\{\underline{u}_n\}$ and $\{\underline{v}_n\}$, we complete the proof of conclusion (3).

   \sk{\rm(4)}\ and \sk{\rm(5)}\, Consider the following problem
    \bes\label{2.14}
\left\{\begin{aligned}
&w_t=d_2\dd\int_{0}^{r(t)}J_2(x-y)w(t,y)\dy-d_2j_2(x) w+r_2w(1-2w), && t>0,~0\le x<r(t),\\
&w(t,r(t))=0, && t>0,\\
&r'(t)=\mu\dd\int_{0}^{r(t)}\!\!\int_{r(t)}^{\infty}
J_2(x-y)w(t,x)\dy\dx, && t>0,\\
&r(0)=h(T),\;\; w(0,x)=v(T,x), &&0\le x\le r(0).
\end{aligned}\right.
 \ees
By \cite[Theorem 4.3]{LW21}, spreading happens for $(w,r)$ if $T$ is large enough. Note that $J_2$ violates {\bf(J1)}. Using \cite[Theorem 4.4]{LW21} we know $\lim_{t\to\yy}r(t)/t=\yy$. Moreover, by $f_2(u,v)\ge r_2v(1-2v)$ and the comparison principle, one has $h(t+T)\ge r(t)$ for $t\ge0$, and thus $\lim_{t\to\yy}h(t)/t=\yy$. Then by following the similar lines as above we can finish the proof, and the details are omitted here.

\sk{\rm(6)}\, For any $\theta\in(1,1/(\gamma-1))$, we choose a strictly decreasing sequence $\theta_n\in(\theta,1/(\gamma-1))$. Then $u$ satisfies
 \bess\left\{\begin{aligned}
&u_t\ge d_1\int_{0}^{t^{\theta_1}+h_0}J_1(x-y)u(t,y)\dy-d_1j_1(x)u+r_1u(a-2u), & &t>0,~0\le x<t^{\theta_1}+h_0,\\
&u(t,t^{\theta_1}+h_0)>0,& &t>0,\\
&u(0,x)>0,& &0\le x\le h_0.\\
 \end{aligned}\right.
 \eess
 By the conclusion (4) in Lemma \ref{l2.1} and a comparison argument, we have $\liminf_{t\to\yy}u(t,x)\ge \underline{u}_1$ uniformly in $[0,t^{\theta_2}]$. Moreover, it follows from the assumption on $J_2$ and \cite[Lemma 4.4]{LW21} that $r(t)\ge C t^{\frac{1}{\gamma-1}}$ for $t\gg1$ and some $C>0$. Due to $h(t+T)\ge r(t)$, there is $\tilde C>0$ such that $h(t)\ge \tilde C t^{\frac{1}{\gamma-1}}$ for $t\gg1$. So for any small $\ep>0$, there exists $T_1>0$ such that $h(t)>t^{\theta_3}+h_0$ and $u(t,x)\ge \underline{u}_1-\ep$ for $t\ge T_1$ and $0\le x\le t^{\theta_3}+h_0$. Then $\hat v(t,x)=v(t+T_1,x)$ satisfies
 \bess\left\{\begin{aligned}
&\hat v_t\ge d_2\int_{0}^{t^{\theta_3}+h_0}\!\!J_2(x-y)\hat v(t,y)\dy-d_2j_2(x)\hat v
+f_2(\underline{u}_1-\ep,\,\hat v), & &\!t>0, ~ x\in[0, t^{\theta_3}+h_0],\\
&\hat v(t,t^{\theta_3}+h_0)>0,& &t>0,\\
&\hat v(0,x)>0,& &x\in[0,h_0].\\
 \end{aligned}\right.
 \eess
Using conclusion (4) of Lemma \ref{l2.1}, the comparison principle and arbitrariness of $\ep$, we obtain $\liminf_{t\to\yy}v(t,x)\ge \underline{v}_1$ uniformly in $[0,t^{\theta_4}]$. Repeating the above iteration process, we similarly can derive that $\liminf_{t\to\yy}u(t,x)\ge u^*$ and $\liminf_{t\to\yy}v(t,x)\ge v^*$ uniformly in $[0,t^\theta]$. In view of \eqref{2.11}, we prove this conclusion.

\sk{\rm(7)}\, This result can be proved by following the analogous arguments in the above discussion, and the details are omitted here.
\end{proof}

\section{Criteria for spreading and vanishing}
In this section, we investigate the criteria for spreading and vanishing. The following comparison principle will be used to prove our results.
In this section, let $(u,v,h)$ be a solution of \eqref{1.1}.
\begin{lemma}[Comparison principle]\label{l3.1} Assume that $\bar{u},\bar{u}_t\in C([0,T]\times\overline{\mathbb{R}}^+)$, $\bar{u}\in L^{\yy}([0,T]\times\mathbb{R}^+)$, $\bar{v},\bar{v}_t\in C([0,T]\times[0,\bar{h}(t)])$ and $\bar{h}\in C^1([0,T])$. If $(\bar{u},\bar{v},\bar{h})$ satisfies
\bes\left\{\begin{aligned}\label{3.1}
&\bar u_t\ge d_1\int_{0}^{\yy}\!J_1(x-y)\bar u(t,y)\dy-d_1j_1(x)\bar u+f_1(\bar u, \bar v), & &t\in(0,T],~x\in\overline{\mathbb{R}}^+,\\
&\bar v_t\ge d_2\int_{0}^{\bar h(t)}\!J_2(x-y)\bar v(t,y)\dy-d_2j_2(x)\bar v+f_2(\bar u, \bar v), & &t\in(0,T],~x\in[0,\bar h(t)),\\
&\bar v(t,x)\ge0,& &t\in(0,T], ~x\in[\bar h(t),\yy) \\
&\bar h'(t)\ge\mu\int_{0}^{\bar h(t)}\int_{\bar h(t)}^{\infty}
J_2(x-y)\bar v(t,x)\dy\dx,& &t\in(0,T],\\
&\bar u(0,x)\ge u_0(x), ~ x\in\overline{\mathbb{R}}^+; ~ ~ \bar h(0)\ge h_0>0, ~ \bar v(0,x)\ge v_0(x)& &x\in[0,\bar h(0)],
 \end{aligned}\right.
 \ees
 then we have
 \[\bar{h}(t)\ge h(t), ~ t\in[0,T], ~ ~ ~ \bar{u}(t,x)\ge u(t,x) ~ ~ \bar{v}(t,x)\ge v(t,x), ~ (t,x)\in[0,T]\times\mathbb{R}^+.\]
\end{lemma}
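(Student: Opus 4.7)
The plan is to combine a nonlocal infimum argument for the coupled difference $(U,V):=(\bar u-u,\bar v-v)$ --- which inherits a cooperative structure from the mutualist nature of $(f_1,f_2)$ --- with a contradiction argument propagating the free boundary ordering $\bar h\ge h$. By continuous dependence of \eqref{1.1} on the initial data, I first reduce to strict initial inequalities $\bar h(0)>h_0$, $\bar u(0,x)>u_0(x)$, $\bar v(0,x)>v_0(x)$, and recover the stated non-strict version by letting the perturbation tend to zero. Setting
\[T^{*}:=\sup\{t\in[0,T]: \bar h(s)>h(s) \text{ for all }s\in[0,t]\},\]
continuity gives $T^{*}>0$, and the task reduces to showing $T^{*}=T$.

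On $[0,T^{*}]$ I extend $v$ by zero past $h(t)$ so that $\int_0^{h(t)}$ and $\int_0^{\bar h(t)}$ agree when applied to $v$. The mean value theorem gives $f_i(\bar u,\bar v)-f_i(u,v)=a_{i1}(t,x)U+a_{i2}(t,x)V$ with bounded coefficients, and the mutualist conditions $\partial_v f_1\ge 0$, $\partial_u f_2\ge 0$ force the crucial sign $a_{12},a_{21}\ge 0$. Subtracting \eqref{1.1} from \eqref{3.1} then yields a linear cooperative system of nonlocal integro-differential inequalities for $(U,V)$ with $U(0,\cdot)>0$, $V(0,\cdot)>0$, and $V(t,x)=\bar v(t,x)\ge 0$ on $[h(t),\bar h(t)]$. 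Setting $\tilde U=e^{-Kt}U$, $\tilde V=e^{-Kt}V$ for $K$ larger than bounds on $|a_{ii}|+a_{ij}$, I suppose for contradiction that $m:=\inf(\tilde U,\tilde V)<0$ is attained, say by $\tilde V$ at some $(t_0,x_0)$ with $t_0>0$. Using $\int_0^{\bar h(t_0)}J_2(x_0-y)\tilde V(t_0,y)\,\dy\ge m\int_0^{\bar h(t_0)}J_2(x_0-y)\,\dy\ge m\,j_2(x_0)$ (the second because $m<0$ and $\int_0^{\bar h}J_2\le j_2$), together with $a_{21}\tilde U(t_0,x_0)\ge a_{21}m$, a direct computation yields $\tilde V_t(t_0,x_0)\ge m(a_{22}+a_{21}-K)>0$, contradicting $\tilde V_t(t_0,x_0)\le 0$ at the infimum. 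Hence $U,V\ge 0$ on $[0,T^{*}]$.

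If $T^{*}<T$, then $\bar h(T^{*})=h(T^{*})$ so $(\bar h-h)'(T^{*})\le 0$. Combining $\bar v\ge v$ and $\bar h\ge h$ on $[0,T^{*}]$ with the integral form of the free boundary condition in \eqref{3.1} gives
\[\bar h'(T^{*})\ge\mu\int_0^{h(T^{*})}\!\!\int_{h(T^{*})}^{\yy}J_2(x-y)\,\bar v(T^{*},x)\,\dy\,\dx\ge h'(T^{*}),\]
and the strict initial perturbation, propagated to $V$ via a Duhamel-type representation (using $J_2(0)>0$ from \textbf{(J)} to spread positivity), upgrades this to the strict inequality $\bar h'(T^{*})>h'(T^{*})$, contradicting $(\bar h-h)'(T^{*})\le 0$. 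Thus $T^{*}=T$, and letting the perturbation tend to zero completes the proof.

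The principal obstacle will be the final step: transferring the strict initial positivity of $V$ all the way to the free boundary at time $T^{*}$ to secure the strict derivative inequality. In the nonlocal setting the classical strong maximum principle is unavailable, so I would lean on $J_2(0)>0$ together with a careful Duhamel estimate to guarantee that $V$ remains strictly positive on a neighborhood in $[0,h(T^{*})]$ where the integrand of the free boundary formula is nontrivial.
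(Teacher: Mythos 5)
Your overall architecture (perturb to strict initial inequalities, linearize via the mean value theorem into a cooperative nonlocal system, run an exponential-shift infimum argument, then derive a contradiction at the first touching time of the free boundaries) matches the paper's proof in spirit, but there are two genuine gaps.

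First, and most importantly, the paper does \emph{not} need to propagate strict positivity of $V=\bar v-v$ up to the free boundary, which you correctly identify as your ``principal obstacle'' and leave unresolved. The paper's perturbation is applied to the \emph{lower} solution: it replaces $(u_0,v_0,h_0,\mu)$ by $((1-\ep)u_0,v_0^\ep,(1-\ep)h_0,(1-\ep)\mu)$ and lets $(u^\ep,v^\ep,h^\ep)$ solve \eqref{1.1} with this shrunken data and the \emph{shrunken coefficient} $\mu^\ep=(1-\ep)\mu$. At a hypothetical first touching time $t_0$ (where $\bar h(t_0)=h^\ep(t_0)$), the free-boundary comparison then reads
\[
0\ge \bar h'(t_0)-(h^\ep)'(t_0)\ge \mu\int\!\!\int J_2\,\bar v - \mu^\ep\int\!\!\int J_2\,v^\ep
=(\mu-\mu^\ep)\int\!\!\int J_2\,\bar v + \mu^\ep\int\!\!\int J_2(\bar v-v^\ep),
\]
and the first term is already strictly positive because $\mu>\mu^\ep$ and $\bar v\ge v^\ep>0$ on $(0,h^\ep(t_0))$ (strict positivity of $v^\ep$ being a property of genuine solutions of \eqref{1.1}, not something that has to be transported from $t=0$). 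Thus the strict inequality needed for the contradiction comes from the $\mu$-perturbation, not from any strong-maximum-principle or Duhamel argument on $V$. In your version, where $\mu$ is left unperturbed, the inequality collapses to $\mu\int\!\!\int J_2\,V\ge 0$ and you genuinely do need $V>0$ somewhere in the effective integration region near $x=h(T^*)$; the sketch via $J_2(0)>0$ and ``a careful Duhamel estimate'' would require showing that the initial strict gap $V(0,\cdot)>0$ on $[0,\bar h(0))$ reaches out to $[h^\ep_0, h(T^*))$, and the $V$-inequality $V_t\ge -d_2 j_2 V + a_{22}V$ by itself only preserves positivity pointwise, it does not create it at new locations. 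As written, this step fails to close.

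Second, before the mean value theorem can yield \emph{bounded} coefficients $a_{ij}$, you must know that $1+b\bar v$ stays bounded away from zero, since $\partial_v f_1=r_1 u\,b u/(1+bv)^2$ blows up as $1+bv\to 0$, and $\bar v$ is only given as a supersolution and could a priori dip below $-1/b$. The paper handles this as a preliminary step: it shows $\bar u,\bar v\ge 0$ by a continuation argument (if $1+b\bar v$ first vanishes at $(t_*,x_*)$, the maximum principle applied on $[0,t_*)$ forces $\bar u,\bar v\ge0$ up to $t_*$, a contradiction). Your proof skips this entirely; without it the cooperative linearization is not justified. A smaller point: you assert the negative infimum $m$ of $(\tilde U,\tilde V)$ is attained and ``say by $\tilde V$'', but $\tilde U$ lives on the unbounded set $[0,T^*]\times\overline{\mathbb{R}}^+$; the paper instead tracks the first time the infimum of $\tilde w$ over $x$ reaches zero and uses the uniform lower bound on $\tilde w_t$, which avoids any attainment issue.
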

\begin{proof}We first claim that $\bar{u}\ge0$ and $\bar{v}\ge0$ in $[0,T]\times\overline{\mathbb{R}}^+$. In fact, this is a direct consequence of maximum principle if $1+b\bar{v}>0$ in $[0,T]\times[0,\bar{h}(t)]$. Assume on the contrary that there is $(t_1,x_1)\in (0,T]\times[0,\bar{h}(t_1)]$ such that $1+b\bar{v}(t_1,x_1)\le0$. By virtue of $1+b\bar{v}(0,x)\ge 1$ and the continuity, there exists $(t_*,x_*)\in (0,t_1]\times[0,\bar{h}(t_*)]$ such that $1+b\bar{v}(t_*,x_*)=0$ and $1+b\bar{v}>0$ in $[0,t_*)\times[0,\bar{h}(t)]$. Apply the maximum principle to the equation of $\bar u$ to deduce that $\bar{u}\ge0$ in $[0,s]\times\overline{\mathbb{R}}^+$ for any $s\in(0,t_*)$. Hence, $\bar{u}\ge0$ in $[0,t_*]\times\overline{\mathbb{R}}^+$. Applying the maximum principle to $\bar v$ we have $\bar{v}\ge0$ in $[0,t_*]\times[0,\bar{h}(t)]$. This contradiction implies our claim.

For $0<\ep\ll 1$, we define $u^{\ep}_0(x)=(1-\ep)u_0(x)$, $h^{\ep}_0=(1-\ep)h_0$ and $\mu^{\ep}=(1-\ep)\mu$. Choose $v^{\ep}_0\in C([0,h^{\ep}_0])$ satisfying  $v^{\ep}_0(h^{\ep}_0)=0<v^{\ep}_0(x)$ in $[0,h^{\ep}_0)$ and $v^{\ep}_0\to v_0$ in $C([0,h^{\ep}_0])$ as $\ep\to0$. Let $(u^{\ep},v^{\ep},h^{\ep})$ be the unique solution of \eqref{1.1} with $(u^{\ep}_0,v^{\ep}_0,h^{\ep}_0,\mu^{\ep})$ in place of $(u_0,v_0,h_0,\mu)$.

We claim $\bar{h}{\rrr >}h^{\ep}$ in $[0,T]$. Clearly, it holds true when $t$ is small. If it is not true, there is the smallest $t_0\in(0,T]$ such that $\bar{h}(t_0)=h^{\ep}(t_0)$ and $\bar{h}>h^{\ep}$ in $[0,t_0)$. Obviously, $\bar{h}'(t_0)\le (h^{\ep})'(t_0)$. We now compare $(\bar u,\bar{v})$ and $(u^{\ep},v^{\ep})$ in $[0,t_0]\times\overline{\mathbb{R}}^+$. Define $w=\bar{u}-u^{\ep}$ and $z=\bar{v}-v^{\ep}$. Then
 \bess\left\{\begin{aligned}
&w_t\ge d_1\int_{0}^{\yy}J_1(x-y)w(t,y)\dy-d_1j_1(x)w+c_{11}w+c_{12}z, & &t\in(0,t_0],~x\ge0,\\
&z_t\ge d_2\int_{0}^{h^{\ep}(t)}\!\!J_2(x-y) z(t,y)\dy-d_2j_2(x) z+c_{21}w+c_{22}z, & &t\in(0,t_0],~0\le x\le h^{\ep}(t),\\
&z(t,x)\ge0, ~ 0<t\le t_0, ~ x\ge h^{\ep}(t); ~ ~ ~ w(0,x)>0, ~ x\ge0; ~ ~ ~ z(0,x)>0,& & 0\le x\le h^{\ep}_0,
 \end{aligned}\right.
 \eess
where $c_{ij}\in L^{\yy}([0,t_0]\times\overline{\mathbb{R}}^+)$ for $i,j=1,2$ and $c_{12},c_{21}\ge0$ in $[0,t_0]\times\overline{\mathbb{R}}^+$. Define $\tilde{w}=w+\delta e^{kt}$ and $\tilde{z}=z+\delta e^{kt}$ with small $\delta>0$ and $k>2\big(d_1+d_2+\sum_{i,j=1}^{2}\|c_{ij}\|_{\yy}+1\big)$. Then we have
\bess\left\{\begin{aligned}
&\tilde w_t\ge d_1\int_{0}^{\yy}\!\!J_1(x-y)\tilde w(t,y)\dy-d_1j_1(x)\tilde w+c_{11}\tilde w+c_{12}\tilde z+A_1, \!\!&&t\in(0,t_0],~x\ge0,\\
& \tilde z_t\ge d_2\int_{0}^{h^{\ep}(t)}\!\!\!J_2(x-y) \tilde z(t,y)\dy-d_2j_2(x) \tilde z+c_{21}\tilde w+c_{22}\tilde z+A_2, \!\!& &t\in(0,t_0],~0\le x\le h^{\ep}(t),\\
&\tilde z(t,x)>0,~ 0<t\le t_0, ~ x\ge h^{\ep}(t); ~ ~ ~\tilde w(0,x)\ge\delta,~ x\ge0; ~ ~ ~ \tilde z(0,x)\ge\delta, & &x\in[0,h^{\ep}_0],
 \end{aligned}\right.
 \eess
where $A_i(t,x)=\delta e^{kt}(k-c_{i1}-c_{i2})$, $i=1,2$.
We now prove that $\tilde{w}, \tilde{z}\ge0$ in $[0,t_0]\times\overline{\mathbb{R}}^+$. Argue indirectly, and suppose that there exists $(t,x)\in[0,t_0]\times\overline{\mathbb{R}}^+$ such that $\tilde{w}(t,x)<0$ or $\tilde{z}(t,x)<0$. Since $\tilde{w}(0,x)\ge \delta$ and $\tilde{w}_t$ has a uniform lower bound, there exists the smallest $\tau\in(0,t_0]$ such that $\inf_{x\in\overline{\mathbb{R}}^+}\tilde{w}(\tau,x)=0$, and $\tilde{w}>0$,  $\tilde{z}\ge0$ in  $[0,\tau)\times\overline{\mathbb{R}}^+$; or $\tilde{z}(\tau,x_0)=0$ for some $x_0\in[0,h^{\ep}(\tau))$, and $\tilde{z}>0$, $\tilde{w}\ge0$ in $[0,\tau)\times\overline{\mathbb{R}}^+$. For the first case, we define $\hat{w}=\tilde{w}-\delta$, and then
\bess\left\{\begin{aligned}
&\hat w_t\ge d_1\int_{0}^{\yy}J_1(x-y)\hat w(t,y)\dy-d_1j_1(x)\hat w+c_{11}\hat w+A_1+\delta c_{11}, & &t\in(0,\tau],~x\in\overline{\mathbb{R}}^+,\\
&\hat w(0,x)\ge0, & & x\in\overline{\mathbb{R}}^+.
 \end{aligned}\right.
 \eess
As $A_1+\delta c_{11}=\delta e^{kt}(k-c_{11}-c_{12}+c_{11}e^{-kt})\ge0$, the maximum principle gives $\hat{w}\ge0$, and so $\tilde{w}\ge \delta$ in $[0,\tau]\times\overline{\mathbb{R}}^+$. This contradiction indicates that the first case cannot happen. Similarly, we can show that the second case also cannot happen. Hence $\tilde{w}, \tilde{z}\ge0$ in $[0,t_0]\times\overline{\mathbb{R}}^+$.

By the arbitrariness of $\delta$ and maximum principle,  $w>0$ in $[0,t_0]\times\overline{\mathbb{R}}^+$ and $z>0$ in $[0,t_0]\times[0,h^{\ep}(t))$. Thus we have
 \bess
 0&\ge& \bar{h}'(t_0)-h'^{\ep}(t_0)\ge\mu\int_{0}^{\bar h(t_0)}\int_{\bar h(t_0)}^{\infty}
J_2(x-y)\bar v(t,x)\dy\dx-\mu^{\ep}\int_{0}^{\bar h(t_0)}\int_{\bar h(t_0)}^{\infty}
J_2(x-y)v^{\ep}(t,x)\dy\dx\\
&>&\mu^{\ep}\int_{0}^{\bar h(t_0)}\int_{\bar h(t_0)}^{\infty}
J_2(x-y)\left(\bar v(t,x)-v^{\ep}(t,x)\right)\dy\dx>0.
 \eess
This contradiction implies $\bar{h}>h^{\ep}$ in $[0,T]$. It then follows from the above analysis that $\bar{u}\ge u^{\ep}$, $\bar{v}\ge v^{\ep}$ in $[0,T]\times\overline{\mathbb{R}}^+$.
By the continuous dependence of solution of \eqref{1.1} on $\ep$, the proof is ended.
\end{proof}
From the above comparison principle, we can see that the solution $(u,v,h)$ of \eqref{1.1} is monotonically increasing in $\mu>0$. If $r_2<d_2/2$, from \cite[Lemma 2.3]{LW21} there is a unique $\ell^*>0$ such that $\lambda_p(\mathcal{L}_{(0,\ell^*)}+r_2)=0$ and $\lambda_p(\mathcal{L}_{(0,l)}+r_2)(l-\ell^*)>0$ for any $l\neq\ell^*$.
\begin{theorem}\label{t3.1} Then we have the following conditions governing spreading and vanishing of \eqref{1.1}.

 \sk{\rm(1)}\, If $r_2\ge d_2/2$, then spreading happens;

  \sk{\rm(2)}\, If $r_2<d_2/2$, then spreading happens if $h_0\ge \ell^*$. Moreover, if $h_{\yy}<\yy$, then $h_{\yy}\le \ell^*$;

   \sk{\rm(3)}\,If $r_2<d_2/2$ and $h_0<\ell^*$, then there exists a unique $\mu^*>0$ such that spreading happens if $\mu>\mu^*$, and vanishing occurs if $0<\mu\le\mu^*$.
\end{theorem}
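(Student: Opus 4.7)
The plan is to organize the argument around the fact that, by Theorem~\ref{t2.2}, vanishing forces $\lambda_p(\mathcal{L}_{(0,h_\yy)}+r_2)\le 0$, while the monotonicity and threshold properties of $\lambda_p(\mathcal{L}_{(0,l)}+r_2)$ in $l$ recorded in \cite[Lemma~2.3]{LW21} tell us precisely when such a non-positive value is possible. Then (1) is immediate: when $r_2\ge d_2/2$, the cited lemma guarantees $\lambda_p(\mathcal{L}_{(0,l)}+r_2)>0$ for \emph{every} $l>0$, ruling out $h_\yy<\yy$. For (2), I would first observe that $v(t,\cdot)>0$ on $[0,h(t))$ for $t>0$ (by the positivity argument used inside the proof of Lemma~\ref{l3.1}) together with $J_2(0)>0$, so the Stefan condition gives $h'(t)>0$ for every $t>0$ and hence $h_\yy>h_0\ge\ell^*$. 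If we had $h_\yy<\yy$, the characterization of $\ell^*$ would yield $\lambda_p(\mathcal{L}_{(0,h_\yy)}+r_2)>0$, contradicting Theorem~\ref{t2.2}. The ``moreover'' part falls out from the same characterization applied to $\lambda_p(\mathcal{L}_{(0,h_\yy)}+r_2)\le 0$.

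For (3), I denote the solution with parameter $\mu$ by $(u_\mu,v_\mu,h_\mu)$ and set $\Sigma:=\{\mu>0:h_\mu(\yy)=\yy\}$. Lemma~\ref{l3.1} gives the monotonicity $h_{\mu_1}(t)\le h_{\mu_2}(t)$ whenever $\mu_1<\mu_2$, so $\Sigma$ is an upper interval, and the goal becomes $\Sigma=(\mu^*,\yy)$ for some $\mu^*\in(0,\yy)$. To obtain $\mu^*>0$, I would pick $\ell_1\in(h_0,\ell^*)$ and let $\psi>0$ be the principal eigenfunction associated with $\lambda_1:=\lambda_p(\mathcal{L}_{(0,\ell_1)}+r_2)<0$, then construct an upper solution $(\bar u,\bar v,\bar h)$ with $\bar u\equiv K_1$, $\bar h(t)=\ell_1-(\ell_1-h_0)e^{-\beta t}$, and $\bar v(t,x)=Me^{-\beta t}\Psi(t,x)$, where $\Psi$ is a suitable rescaling of $\psi$ supported on $[0,\bar h(t)]$ and $\beta\in(0,-\lambda_1)$ is small. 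Using $f_2(u,v)\le r_2 v$, the $\bar v$-inequality reduces to a condition on $\beta$ independent of $\mu$, while the Stefan inequality $\bar h'(t)\ge\mu\int_0^{\bar h(t)}\!\int_{\bar h(t)}^{\yy}J_2(x-y)\bar v(t,x)\,\dy\dx$ is satisfied as soon as $\mu$ is sufficiently small (both sides are $O(e^{-\beta t})$ but the right-hand side carries an extra factor $\mu$). Lemma~\ref{l3.1} then gives $h_\mu\le\bar h<\ell^*$, so $\mu\notin\Sigma$. To get $\mu^*<\yy$, I would use a $\mu$-independent positive lower bound for $v$ on a short time cylinder, together with the resulting inequality $h'_\mu(t)\ge c_0\mu$ on that cylinder, to force $h_\mu(T)>\ell^*$ at some fixed $T$ when $\mu$ is large; part~(2) applied to the time-shifted data starting at $T$ then yields $h_\mu(\yy)=\yy$.

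It remains to check that $\mu^*:=\inf\Sigma$ lies in the vanishing regime. I would argue by contradiction: if $h_{\mu^*}(\yy)=\yy$, choose $T_0$ with $h_{\mu^*}(T_0)>\ell^*$; the continuous dependence of $h_\mu(T_0)$ on $\mu$ (which follows from the constructive uniqueness proof underlying Theorem~\ref{t2.1}) keeps $h_\mu(T_0)>\ell^*$ for all $\mu$ slightly below $\mu^*$, and part~(2) applied to the time-shifted data then forces $h_\mu(\yy)=\yy$ for such $\mu$, contradicting the definition of $\mu^*$. Hence $h_{\mu^*}(\yy)<\yy$, and the monotonicity yields $\Sigma=(\mu^*,\yy)$. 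The hardest step will be the super-solution construction in the proof of $\mu^*>0$: one must satisfy simultaneously a nonlocal differential inequality for $\bar v$ on a time-dependent domain and the Stefan inequality for $\bar h$, using only the freedom to take $\mu$ small and $M,\beta$ appropriate; I would expect to mirror the analogous construction carried out in \cite{LW21} for the single-species counterpart, replacing $\psi$ by an appropriate rescaling or truncation to account for the shrinking effective domain.
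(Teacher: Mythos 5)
Your treatment of (1), (2) and the threshold structure in (3) (the monotone set $\Sigma$, $\mu^*=\inf\Sigma$, and the continuous-dependence contradiction argument to place $\mu^*$ in the vanishing regime) is exactly the paper's argument. Where you diverge is in establishing $0<\mu^*<\infty$. The paper does not build explicit super/sub-solutions for the two-species system at all: it observes $r_2v(1-2v)\le f_2(u,v)\le r_2v(1-v)$ and compares $(v,h)$ with the scalar free boundary problem \eqref{1.4} for $(d_2,J_2,r_2 w(1-w))$ from above and $(d_2,J_2,r_2 w(1-2w))$ from below, then invokes \cite[Theorem~4.3]{LW21} to conclude that the dominating scalar problem vanishes for small $\mu$ and the dominated one spreads for large $\mu$. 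This is considerably shorter and outsources all delicate estimates to the already-proved scalar result. Your direct super-solution for small $\mu$ is sound (and can even be simplified: no time-dependent rescaling of $\psi$ is needed, since $\bar h(t)\le\ell_1$ allows one to take $\bar v=Me^{-\beta t}\psi$ with $\psi$ the fixed eigenfunction on $[0,\ell_1]$ and use $\int_0^{\bar h(t)}\le\int_0^{\ell_1}$), so this direction is fine, just more work than the paper expends.

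The genuine gap is in your large-$\mu$ argument. You propose a $\mu$-independent lower bound on $v$ over a short time cylinder and deduce $h'_\mu(t)\ge c_0\mu$, but the Stefan integral $\mu\int_0^{h_\mu(t)}\int_{h_\mu(t)}^\infty J_2(x-y)v_\mu(t,x)\,\dy\dx$ only sees $v_\mu$ within a kernel-width of the moving front $x=h_\mu(t)$, whereas the $\mu$-independent lower bound (obtained by comparison with a fixed-domain problem on $[0,h_0]$) lives on the fixed region $[0,h_0]$. Once $h_\mu(t)$ moves more than a kernel-width past $h_0$ — which happens quickly precisely because $\mu$ is large — the bound decouples from the Stefan condition, and $h'_\mu(t)\ge c_0\mu$ no longer follows; there is no nonlocal Hopf lemma to rescue the step. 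One would need an iteration across a chain of overlapping windows, or a mass-type functional argument, to push $h_\mu$ past $\ell^*$, and you have not supplied that. The paper avoids the issue entirely by comparing with the scalar problem \eqref{1.4}, for which the existence of $\overline\mu$ is already part of \cite[Theorem~4.3]{LW21}. I'd recommend adopting that reduction for this direction, or explicitly carrying out the iterative window argument.
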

\begin{proof}Conclusions (1) and (2) are easily proved by \cite[Lemma 2.3]{LW21} and Theorem \ref{t2.2},  and we omit the details. Next we focus on the proof of conclusion (3). We first claim that there is a $\underline{\mu}>0$ such that vanishing happens if $\mu<\overline{\mu}$. Let $(\overline{v},\overline{h})$ be the unique solution of \eqref{1.4} with $(d_2,J_2,r_2u(1-u),v_0(x))$ in place of $(d,J,f(u),u_0(x))$.
 By a comparison consideration, we have $h(t)\le \overline{h}(t)$ for $t\ge0$. By virtue of \cite[Theorem 4.3]{LW21}, there is a $\underline{\mu}>0$ such that $\lim_{t\to\yy}\overline{h}(t)<\yy$ if $\mu\le\underline{\mu}$. Hence our claim is proved.

 Then we assert that there is a $\overline{\mu}>0$ such that spreading happens if $\mu>\overline{\mu}$. Let $(\underline{v},\underline{h})$ be the unique solution to \eqref{1.4} with $(d,J,f(u),u_0(x))$ replaced by $(d_2,J_2,r_2u(1-2u),v_0(x))$.
 By a comparison argument again, we see $h(t)\ge\underline{h}(t)$ for $t\ge0$. Moreover, it follows from \cite[Theorem 4.3]{LW21} that there is a $\overline{\mu}>0$ such that $\lim_{t\to\yy}\underline{h}(t)=\yy$ if $\mu>\overline{\mu}$. Thus our assertion follows.

 Based on the above analysis, we can define $\mu^*=\inf\{\Lambda>0: {\rm spreading ~ happens ~ if ~ } \mu>\Lambda\}$.
 Clearly, $\mu^*\in(0,\yy)$. Moreover, spreading occurs if $\mu>\mu^*$, and vanishing happens if $\mu<\mu^*$. To our purpose, it remains to show that vanishing happens if $\mu=\mu^*$. To stress the dependence on $\mu$, we denote the solution of \eqref{1.1} by $(u_{\mu},v_{\mu},h_{\mu})$. Arguing indirectly, we assume that $\lim_{t\to\yy}h_{\mu^*}(t)=\yy$. Then there exists a $T>0$ such that $h_{\mu^*}(T)>\ell^*$. By continuity, there is a small $\ep>0$ such that $h_{\mu}(T)>\ell^*$ when $\mu^*-\ep<\mu<\mu^*$, which contradicts the definition of $\mu^*$. So the proof is end.
\end{proof}

\section{Double free boundaries model}

In this section, we are going to check the dynamics of \eqref{1.1} but with double free boundaries, and to prove that there are analogous conclusions holding true for this problem.
\bes\label{4.1}\left\{\begin{aligned}
&u_t=d_1\int_{-\yy}^{\yy}J_1(x-y)u(t,y)\dy-d_1u+f_1(u,v), & &t>0,~x\in\mathbb{R},\\
&v_t= d_2\int_{g(t)}^{h(t)}J_2(x-y)v(t,y)\dy-d_2v+f_2(u,v), & &t>0,~x\in(g(t),h(t)),\\
&v(t,x)=0,& &t>0, ~ x\notin(g(t),h(t)),\\
&g'(t)=-\mu\int_{g(t)}^{ h(t)}\int_{-\yy}^{g(t)}
J_2(x-y) v(t,x)\dy\dx,& &t>0,\\
& h'(t)=\mu\int_{g(t)}^{ h(t)}\int_{ h(t)}^{\infty}
J_2(x-y)v(t,x)\dy\dx,& &t>0,\\
&u(0,x)= u_0(x)>0, ~ x\in\mathbb{R}; ~ ~ -g(0)=h(0)=h_0; ~ ~ v(0,x)= v_0(x),& &x\in[-h_0,h_0],
 \end{aligned}\right.
 \ees
 where $u_0(x)\in C(\mathbb{R})\cup L^{\yy}(\mathbb{R})$, $v_0(x)\in C([-h_0,h_0])$ and $v_0(\pm h_0)=0<v_0(x)$ in $(-h_0,h_0)$.
Below is firstly the well-posedness result of it.
\begin{theorem}\label{t4.1}Problem \eqref{4.1} has a unique solution $(u,v,g,h)$ defined for $t\ge0$. Moreover, $u,u_t\in C(\overline{\mathbb{R}}^+\times\mathbb{R})$, $v,v_t\in C(\overline{\mathbb{R}}^+\times [g(t),h(t)])$ and $g(t),h(t)\in C^1([0,\yy))$. The following estimates hold
  \[0\le u\le K_1:=\max\{\|u_0\|_{L^{\yy}(\mathbb{R})}, \ a\}, \ \ \ 0\le v\le K_2:=\max\{\|v_0\|_{C([-h_0,h_0])}, \ 1\}.\]
 \end{theorem}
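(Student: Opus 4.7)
The plan is to mirror the strategy indicated for Theorem \ref{t2.1} (which follows \cite[Theorem 2.1]{CDLL} and \cite[Theorem 2.2]{CLWZ}), suitably adapted to the two moving boundaries $g(t),h(t)$ and to the fact that $u$ lives on all of $\mathbb{R}$ while $v$ lives only on $(g(t),h(t))$. The four pieces to establish are local-in-time existence and uniqueness, nonnegativity, the a priori bounds $K_1,K_2$, and global extension.

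For local existence I would fix a small $T>0$ (to be chosen) and work in the complete metric space
\[\mathcal{X}_T=\bigl\{(g,h)\in C([0,T])^2:\ g(0)=-h_0,\ h(0)=h_0,\ g\text{ nonincreasing},\ h\text{ nondecreasing},\ \|g-g(0)\|_\infty+\|h-h(0)\|_\infty\le 1\bigr\},\]
equipped with sup distance. Given $(g,h)\in\mathcal{X}_T$, I would solve the coupled system for $(u,v)$ via the integrated form
\[u(t,x)=e^{-d_1t}u_0(x)+\int_0^t e^{-d_1(t-s)}\Bigl[d_1\!\int_{\R}J_1(x-y)u(s,y)\dy+f_1(u,v)\Bigr]\ds,\]
and an analogous identity for $v$ (with the $x$-integration over $(g(s),h(s))$ and the $v$-equation extended by zero outside), using a Banach contraction argument on the space $C([0,T]\times\R)\times C(\overline{D_T})$ where $D_T=\{(t,x):0\le t\le T,\ g(t)<x<h(t)\}$; here the Lipschitz constants of $f_1,f_2$ on bounded sets are finite because the denominators $1+bv,1+cu$ are bounded away from zero for small $T$. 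I would then define $\Gamma:(g,h)\mapsto(\tilde g,\tilde h)$ by plugging the resulting $v$ into the integral expressions for $g',h'$ and integrating in time, and check $\Gamma$ maps $\mathcal{X}_T$ to itself and is a contraction for $T$ small, which yields the unique local solution together with the stated $C^1$ regularity of $g,h$.

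For the a priori estimates, nonnegativity of $u,v$ follows from a nonlocal maximum principle exactly as in the proof of Lemma \ref{l3.1}: the initial data give $1+bv_0,1+cu_0\ge 1$, and one propagates this together with $u,v\ge 0$ on the maximal interval where the denominators stay positive, ruling out a first touching time by contradiction. Once $v\ge 0$, the sign of the integrals giving $g',h'$ shows $g$ is nonincreasing and $h$ nondecreasing, so $g(t)<h(t)$ is preserved. The bounds $v\le K_2$ and $u\le K_1$ follow from comparison with the ODEs $V'=r_2V(1-V)$, $V(0)=K_2$ and $U'=r_1U(a-U)$, $U(0)=K_1$, since $f_2(u,v)\le r_2v(1-v)$ and $f_1(u,v)\le r_1u(a-u)$ whenever $u,v\ge 0$. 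These uniform $L^\infty$ bounds in turn yield a uniform bound on $|g'|+|h'|$ (via the integral formulas and $\|J_2\|_\infty<\infty$), so the local solution extends to all of $[0,\infty)$ by the standard continuation argument: restart the local theory from any time $t_0$ using the current $(u(t_0,\cdot),v(t_0,\cdot),g(t_0),h(t_0))$ as data, and observe that the existence window $T$ produced by the contraction depends only on $K_1,K_2$ and on lower bounds for $1+bv,1+cu$ that are preserved globally.

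The main obstacle I anticipate is the bookkeeping in the contraction argument, specifically ensuring that the Lipschitz dependence of $v$ on $(g,h)$ through the moving domain (and of $(\tilde g,\tilde h)$ on $v$) can be absorbed into a single contraction factor uniform in the two free boundaries; this is handled by introducing a fixed reference interval containing $[g(t),h(t)]$ for $t\le T$ (possible because of the uniform bound on $|g'|+|h'|$ coming from $\|v\|_\infty\le K_2$ and $\|J_2\|_\infty<\infty$) and extending $v$ by zero outside. Since all of these steps are routine modifications of arguments already written out in \cite[Theorem 2.1]{CDLL} and \cite[Theorem 2.2]{CLWZ}, the proof would be stated and then the verification omitted as in Theorem \ref{t2.1}.
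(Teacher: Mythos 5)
Your proposal is correct and matches the approach the paper has in mind: the paper omits the proof of both Theorem \ref{t2.1} and Theorem \ref{t4.1}, deferring to the well-posedness arguments of \cite[Theorem 2.1]{CDLL} and \cite[Theorem 2.2]{CLWZ}, and your outline (Banach fixed point on the moving-boundary map with the $(u,v)$ system solved by Duhamel/contraction, maximum principle for nonnegativity, the ODE comparisons $f_1\le r_1u(a-u)$ and $f_2\le r_2v(1-v)$ for the uniform bounds $K_1,K_2$, and continuation using the uniform bound on $|g'|+|h'|$) is precisely that argument adapted to \eqref{4.1}. The only small remark is that once $u,v\ge0$ is known, the denominators $1+bv,1+cu$ are automatically $\ge 1$, so no separate lower bound needs to be propagated.
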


 \subsection{Longtime behaviors of solution}

It is easy to see that $-g(t)$ and $h(t)$ are increasing in $t\ge0$. So setting  $g_\yy=\lim_{t\to\yy}g(t)$ and $h_{\yy}=\lim_{t\to\yy}h(t)$, we can define $h_{\yy}-g_\yy<\yy$ as vanishing case, and $h_{\yy}-g_\yy=\yy$ as spreading case.
For any $a_0>0$, it is well-known that problem
 \bess
(\mathcal{\tilde L}_{(l_1,\,l_2)}+a_0) \phi:=d_2\int_{l_1}^{l_2}J_2(x-y)\phi(y)\dy-d_2\phi+a_0\phi=\lambda\phi ~ ~ \mbox{in}\;\;[l_1,l_2]
\eess
has a unique principal eigenvalue with a positive eigenfunction. Please see \cite[Proposition 3.4]{CDLL} for more details about this eigenvalue problem. Denote $(u,v,g,h)$ a solution of \eqref{4.1} in this section.

\subsubsection{Vanishing case: $h_{\yy}-g_\yy<\yy$}
 \begin{theorem}\label{t4.2} If $h_{\yy}-g_\yy<\yy$, then $\lambda_p(\mathcal{\tilde L}_{(g_\yy,\,h_{\yy})}+r_2)\le 0$, $\lim_{t\to\yy}\|v(t,\cdot)\|_{C([g(t),h(t)])}=0$ and
 \bess\left\{\begin{aligned}
&\lim_{t\to\yy}\max_{x\in[-ct,\,ct]}|u(t,x)-a/2|=0 ~ {\rm for ~ any ~ } c\in(0,c_*), ~ {\rm  ~ if ~ J_1 ~ satisfies ~ {\bf(J2)}},\\
&\lim_{t\to\yy}\max_{x\in[-ct,\,ct]}|u(t,x)-a/2|=0 ~ {\rm for ~ any ~ } c>0, ~ {\rm  ~ if ~ J_1 ~ violates ~ {\bf(J2)}}.
\end{aligned}\right.\eess
 \end{theorem}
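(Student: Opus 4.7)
The plan is to mirror the three-step proof of Theorem \ref{t2.2}, with straightforward symmetric modifications to handle two moving boundaries and the $u$-equation posed on all of $\mathbb{R}$. Since $g_\yy$ and $h_\yy$ are both finite in the vanishing case, each ingredient from Section 2 has a natural two-sided analog; in particular, one uses \cite[Proposition 3.4]{CDLL} in place of \cite[Lemma 2.3]{LW21} for the principal eigenvalue of $\tilde{\mathcal{L}}_{(l_1,l_2)}+a_0$.

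For the first assertion I would argue by contradiction. If $\lambda_p(\tilde{\mathcal{L}}_{(g_\yy,h_\yy)}+r_2)>0$, pick $\ep_0,\sigma_0>0$ from \textbf{(J)} with $J_2(x)>\sigma_0$ on $|x|\le\ep_0$, and by continuity of $\lambda_p$ in the endpoints choose $\ep\in(0,\ep_0/2)$ and $T>0$ such that $\lambda_p(\tilde{\mathcal{L}}_{(g_\yy+\ep,\,h_\yy-\ep)}+r_2)>0$, $g(t)<g_\yy+\ep$ and $h(t)>h_\yy-\ep$ for $t\ge T$. Using $f_2(u,v)\ge r_2v(1-2v)$, dominate $v$ from below by the solution $w$ of the two-sided auxiliary problem on $[g_\yy+\ep,h_\yy-\ep]$ with reaction $r_2w(1-2w)$ and initial data $v(T,\cdot)$; the two-sided version of \cite[Lemma 2.6]{LW21} gives $w\to W>0$ uniformly, whence $v(t,x)\ge W(x)/2$ on that interval for large $t$. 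Inserting this into the $h'$-formula via the slab $[h_\yy-\ep_0/2,h_\yy-\ep]\times[h_\yy,h_\yy+\ep_0/2]$ yields $h'(t)\ge c>0$ for all large $t$, contradicting $h_\yy<\yy$.

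The second assertion then follows from the bound $f_2(u,v)\le r_2v(1-v)$: $v$ is majorized by the solution $V$ of the nonlocal logistic problem on the fixed domain $[g_\yy,h_\yy]$ with initial datum $K_2$, and since $\lambda_p\le 0$, $V\to 0$ uniformly by the two-sided analog of \cite[Lemma 2.6]{LW21}. For the third, fix $\ep>0$ and $T$ with $v\le\ep$ on $[T,\yy)\times\mathbb{R}$; because $1/(1+bv)$ is decreasing in $v$, one has $f_1(u,v)\le f_1(u,\ep)$, and comparison with the spatially constant ODE gives $\limsup_{t\to\yy}u\le a(1+b\ep)/(2+b\ep)$ uniformly in $\mathbb{R}$, so sending $\ep\to 0$ yields $\limsup u\le a/2$. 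For the matching lower bound, compare $u$ from below with the solution $(\underline u,g_1,h_1)$ of the double-free-boundary problem \eqref{1.2} with $(d,J,f,\mu)$ replaced by $(d_1,J_1,r_1\underline u(a-2\underline u),\kappa)$ and with initial data $\tilde u_0\le u_0$ supported in some $[-l_0,l_0]$. The sharp spreading results from \cite{DLZ,LW211}, together with $\lim_{\kappa\to\yy}c^\kappa_0=c_*$ when \textbf{(J2)} holds and $\lim_{\kappa\to\yy}c^\kappa_0=\yy$ when \textbf{(J2)} fails, let one choose $\kappa$ large enough to conclude $\liminf u\ge a/2$ uniformly on $[-ct,ct]$ for the stated ranges of $c$.

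The main obstacle is establishing the two-sided variants of the eigenvalue-continuity and steady-state convergence lemmas originally stated for $\mathcal{L}_{(0,l)}$ in \cite{LW21}. Since those one-sided arguments rely only on the interior structure of the nonlocal operator and on strong positivity of the kernel, they should lift verbatim to $\tilde{\mathcal{L}}_{(l_1,l_2)}$ via the framework of \cite[Proposition 3.4]{CDLL}. Once these are in hand, Steps 2 and 3 are direct transcriptions of the arguments from Theorem \ref{t2.2}, with the symmetric spreading statement for \eqref{1.2} replacing the one-sided version used there.
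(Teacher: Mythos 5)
Your proposal follows exactly the route the paper intends: the paper omits the proof of Theorem \ref{t4.2}, stating only that it can be proved by arguing as in Theorem \ref{t2.2}, and your three-step adaptation (with \cite[Proposition 3.4]{CDLL} replacing \cite[Lemma 2.3]{LW21} for the two-sided eigenvalue and the double-free-boundary problem \eqref{1.2} replacing \eqref{1.4} as the lower barrier for $u$) is precisely that argument. The only minor imprecision is citing \cite{LW211} for the sharp two-sided spreading estimate — that reference treats the one-sided problem; the two-sided analog is in \cite{DN21}, which the paper itself points to in the remark following Theorem \ref{t4.2}.
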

 \begin{proof}
 Since it can be proved by arguing as in the proof of Theorem \ref{t2.2}, we omit the details.
 \end{proof}
 \begin{remark}We believe that in vanishing case, the followings hold true for problem \eqref{4.1}.
 \bess\left\{\begin{aligned}
&\lim_{t\to\yy}\max_{x\in[-s(t),\,s(t)]}|u(t,x)-a/2|=0 ~ {\rm for ~ any } ~ 0\le s(t)=t^{\frac{1}{\gamma-1}}o(1) ~ ~ {\rm  ~ if ~}J_1\approx|x|^{-\gamma} ~ {\rm for ~ } \gamma\in(1,2),\\
&\lim_{t\to\yy}\max_{x\in[-s(t),\,s(t)]}|u(t,x)-a/2|=0 ~ {\rm for ~ any } ~ 0\le s(t)=(t\ln t)o(1) ~ ~ {\rm  ~ if ~}J_1\approx|x|^{-2}.
\end{aligned}\right.\eess
In fact, by taking advantage of the arguments in the proof of \cite[Lemmas 6.4 and 6.6]{DN21} and \cite[Theorem 4.1]{LW211} one can prove these results.
 \end{remark}

 \subsubsection{Spreading case: $h_{\yy}-g_\yy=\yy$}

\begin{lemma}\label{l4.1} Assume that $-s_1(t), s_2(t)$ are continuous in $[0,\yy)$ and strictly increasing to $\yy$, $-s_1(0)=s_2(0)=s_0>0$ and $P$ satisfies {\bf(J)}. Let $\underline{s}=\min\{\liminf_{t\to\yy}\frac{-s_1(t)}t,\,\liminf_{t\to\yy}\frac{s_2(t)}t \}$, and constants $d,\alpha,\beta$ be positive. Let $w$ be the unique solution of
\bes\label{4.2}
\left\{\begin{aligned}
&w_t=d\dd\int_{s_1(t)}^{s_2(t)}P(x-y)w(t,y)\dy-d w+w(\alpha-\beta w), && t>0,~s_1(t)<x<s_2(t),\\
&w(t,s_i(t))=0, && t>0,~i=1,2,\\
&w(0,x)=w_0(x), &&-s_0\le x\le s_0,
\end{aligned}\right.
 \ees
 where $w_0(x)\in C([-s_0,s_0])$, $w_0(x)>0=w_0(\pm s_0)$ in $(-s_0,s_0)$. Then the followings hold true:

  \sk{\rm(1)}\, $\lim_{t\to\yy}w(t,x)=\alpha/\beta$ locally uniformly in $\mathbb{R}$.

\sk{\rm(2)}\, Suppose that $P$ satisfies {\bf(J1)} and $\underline{s}\in(0,\yy]$.  Then
\bess\left\{\begin{aligned}
&\lim_{t\to\yy}\max_{x\in[-ct,\,ct]}|w(t,x)-{\alpha}/{\beta}|=0 ~ {\rm for ~ any ~ } c\in(0,\min\{\underline{s},C_*\}), {\rm  ~ if ~} P ~ {\rm  satisfies ~ {\bf(J2)} },\\
&\lim_{t\to\yy}\max_{x\in[-ct,\,ct]}|w(t,x)-{\alpha}/{\beta}|=0 ~ {\rm for ~ any ~ } c\in(0,\underline{s}), ~ {\rm  ~ if ~} P ~ {\rm  violates ~ {\bf(J2)} },\end{aligned}\right.
\eess
where $C_*$ is defined as in Lemma \ref{l2.1}.

  \sk{\rm(3)}\, Suppose that $P$ does not satisfies {\bf(J1)} and $\underline{s}\in(0,\yy]$. Then
    \[\lim_{t\to\yy}\max_{x\in[-ct,\,ct]}|w(t,x)-{\alpha}/{\beta}|=0 ~ {\rm for ~ any ~ } c\in(0,\underline{s}).\]

 \sk{\rm(4)}\, Suppose that there exist $C_1,C_2>0$ such that $P(x)\ge C_1|x|^{-\gamma}$ for $|x|\gg1$ and $\gamma\in(1,2)$, and $\min\{-s_1(t),s_2(t)\}\ge C_2t^{\lambda}$ for $t\gg1$ and some $\lambda\in(1,1/(\gamma-1)]$. Then
 \[\lim_{t\to\yy}\max_{x\in[-t^{\theta_1},\,t^{\theta_2}]}|w(t,x)-{\alpha}/{\beta}|=0 ~ {\rm for ~ any ~ } \theta_1,\theta_2\in(1,\lambda).\]

  \sk{\rm(5)}\, Suppose that there exist $C_1,C_2>0$ such that $P(x)\ge C_1|x|^{-2}$ for $|x|\gg1$, and $\min\{-s_1(t),\\ \nonumber s_2(t)\}\ge C_2t(\ln t)^{\eta}$ for $t\gg1$ and some $0<\eta\le1$.  Then
 \[\lim_{t\to\yy}\max_{x\in[-t[\ln (t+1)]^{\omega_1},\,t[\ln (t+1)]^{\omega_2}]}|w(t,x)-{\alpha}/{\beta}|=0 ~ {\rm for ~ any ~ } \omega_1,\omega_2\in(0,\eta).\]
 \end{lemma}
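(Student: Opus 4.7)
The plan is to reduce Lemma \ref{l4.1} to Lemma \ref{l2.1} by exploiting the symmetry of \eqref{4.2} in $x$, patching a ``right-moving'' subsolution built from the spreading front $s_2(t)$ together with a ``left-moving'' subsolution built from $s_1(t)$.

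First I would prove (1) by the same argument as in \cite[Theorem 3.9]{CDLL}: since $w\le \alpha/\beta$ follows from comparison with the ODE $W'=W(\alpha-\beta W)$ with $W(0)=\|w_0\|_\infty$, it suffices to bound $w$ from below on compact sets. For any $L>0$ there is $T_L$ with $s_1(t)<-L-1$ and $s_2(t)>L+1$ for $t\ge T_L$, and on $[-L,L]$ the integral term dominates, so a standard eigenfunction/steady-state argument gives $\liminf w\ge\alpha/\beta$ locally uniformly.

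For (2) and (3) the idea is: given $c<\min\{\underline s,C_*\}$ (resp.\ $c<\underline s$), pick $c_1\in(c,\min\{\underline s,C_*\})$ and $T$ so large that $s_2(t+T)>c_1 t+2K$ and $-s_1(t+T)>c_1 t+2K$ (with $K$ the support radius of a truncation $P_n$ of $P$, exactly as in Step 1 of the proof of Lemma \ref{l2.1}(2)). Define $\tilde w(t,x)=w(t+T,x)$. Using the semi-wave $\phi_{c_0^\zeta}$ associated with $(d,P_n,\phi(\alpha-\beta\phi))$, I would build the lower solution
\[
\underline w(t,x)=(1-\ep)\min\Bigl\{\phi_{c_0^\zeta}(x-c_1 t-2K),\;\phi_{c_0^\zeta}(-x-c_1 t-2K),\;\alpha/\beta-\sqrt{\ep}\Bigr\},
\]
which is just the single-boundary subsolution on the right, its mirror image on the left, capped by the plateau value. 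The verification of the differential inequality is identical to that in Lemma \ref{l2.1}(2) on each half (where one of the two wave profiles is the active one), and trivial on the plateau; the boundary conditions at $\pm(c_1 t+2K)$ are satisfied because $\phi_{c_0^\zeta}(0)=0$. Sending $n\to\yy$, then $\delta\to 0$, $\zeta\to\yy$ and finally $\ep\to 0$ exactly as in Lemma \ref{l2.1} yields the two-sided uniform convergence. Case (3) is identical, using $\lim_{\zeta\to\yy}c_{0,\delta}^\zeta=\yy$ in place of $C_*$ and the separate treatment of $\underline s<\yy$ vs.\ $\underline s=\yy$.

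For (4) and (5) I would mirror the algebraic subsolutions of Lemma \ref{l2.1}(4)--(5). For (4), with $\theta_i<\theta_i'<\lambda$, set
\[
l_\pm(t)=(K_1 t+K_2)^{\theta_i'},\qquad \underline w(t,x)=K_\ep\min\Bigl\{1,\;\frac{2(l_+(t)-x)}{l_+(t)},\;\frac{2(x+l_-(t))}{l_-(t)}\Bigr\},
\]
so that on $[0,l_+(t)/2]$ the right-moving profile is active, on $[-l_-(t)/2,0]$ the left-moving profile is active, and on the overlap $\underline w\equiv K_\ep$. Each of the three pointwise inequalities needed to verify the subsolution property is the one already established in the proof of Lemma \ref{l2.1}(4) (the lower bounds on $\int P\underline w$ for $x$ near the active front, the $(1-\ep^2)$ bound in the interior, and the bound $\underline w(\alpha-\beta\underline w)\ge \frac{\alpha}{2}\sqrt\ep$ on the plateau); the only new cross-term $\int P(x-y)\underline w(t,y)\dy$ coming from the other front is nonnegative and can only help. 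Case (5) is analogous with $l_\pm(t)=K_1(t+K_2)[\ln(t+K_2)]^{\omega_i'}$ and transition layer of width $(t+K_2)^{1/2}$.

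The main technical obstacle is the construction in (4)--(5): verifying that the two-sided ``tent'' subsolution is still a subsolution across the plateau, since the nonlocal operator now sees contributions from both fronts simultaneously. The saving point is that each of those contributions is nonnegative and the plateau estimate $\underline w(\alpha-\beta\underline w)\ge (\alpha/2)\sqrt\ep$ already absorbs the local linear dissipation; thus no new smallness condition on $\ep$, $K_1$, $K_2$ beyond those in Lemma \ref{l2.1} is needed, and the same choice of parameters works.
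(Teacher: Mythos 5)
Your approach to parts (1), (4) and (5) is essentially the same as the paper's (the paper also uses a tent-type subsolution in (4)--(5): $\underline w=K_\ep(\underline s(t)-|x|)/\underline s(t)$ in (4) and $\underline w=K_\ep\min\{1,(\underline s(t)-|x|)/(t+K_2)^{1/2}\}$ in (5), with the one-sided integral estimates and the estimate $\underline w(\alpha-\beta\underline w)\ge\frac{\alpha}{2}\sqrt\ep$ on the plateau playing the same role you describe). The departure is in part (2), and there I believe your construction has a real gap.

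In the single-front Lemma \ref{l2.1}(2) the subsolution is the pure translated wave $(1-\ep)\phi_{c_0^\zeta}(x-c_1t-2K)$ with \emph{no} plateau or kink: the differential inequality is only checked on $[K,c_1t+2K]$, while on $[0,K]$ the comparison requirement is the pointwise bound $\underline w\le\tilde w$, not the PDE inequality. Your two-sided lower solution $\underline w=(1-\ep)\min\{\phi_1,\phi_2,\alpha/\beta-\sqrt\ep\}$ introduces two new interfaces — the plateau boundaries (or, if you drop the cap, the kink at $x=0$) — and for $x$ within the kernel's range of such an interface the verification is \emph{not} identical to the single-front case. The difficulty is that the nonlocal term then averages your subsolution across the interface, where $\underline w$ drops strictly below the active profile $(1-\ep)\phi_1$. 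At the interface $\phi_1\approx(\alpha/\beta-\sqrt\ep)/(1-\ep)$, so $\alpha/\beta-\phi_1\sim\sqrt\ep$ and, by the exponential tail of the semi-wave, $|\phi'_1|\sim\sqrt\ep$. The resulting shortfall in $\int P(x-y)\underline w\,dy$ compared with the single-front computation is of order $\sqrt\ep$, while the available margins — $(1-\ep)\delta|\phi'_1|\sim\delta\sqrt\ep$ from $c_1\le c_0^\zeta-\delta$ (with $\delta$ fixed small, not tunable when $c_1$ is near $C_*$), and $F(\underline w)-(1-\ep)F(\phi_1)=\ep(1-\ep)\beta\phi_1^2\sim\ep$ — do not obviously dominate it. So the claim that "the only new cross-term $\dots$ is nonnegative and can only help" misses the point: the danger is not an extra positive piece from the far front but a \emph{loss} caused by the $\min$ itself.

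The paper avoids this entirely by using the classical additive two-sided subsolution
\[
\underline w(t,x)=(1-\ep)\bigl[\phi_{c_0^\zeta}(x-\underline s(t))+\phi_{c_0^\zeta}(-x-\underline s(t))-\alpha/\beta\bigr],\qquad \underline s(t)=c_1t+L,
\]
for which the linear nonlocal operator decomposes exactly (no loss term at all), and the whole difficulty is pushed into the inequality $A(t,x)\le F(\underline w)$ for the reaction term. That inequality is then verified by a decomposition into three regions: within distance $M$ of the right front, within distance $M$ of the left front, and the interior $[-\underline s(t)+M,\underline s(t)-M]$, using the auxiliary estimates \eqref{4.3} and the constants $M,\hat\ep,\ep_0,M_0,L$ chosen precisely so that the margin $(1-\ep)\delta\ep_0$ dominates the $2M_0(1-\ep)\hat\ep$ error near the fronts, and so that $(1-\ep)\cdot 2F(\frac{\alpha}{\beta}(1-\frac{\ep}{4}))<F(\frac{\alpha}{\beta}(1-\ep))$ closes the interior. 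Your sketch does not contain an analogue of either of these quantitative steps, and without them the $\min$-tent subsolution is not verified. Either switch to the additive construction, or supply an explicit estimate of the nonlocal loss at the interfaces and show it is absorbed.
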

 \begin{proof}(1) Since it is easy to prove this conclusion, the details are omitted here.

 (2) Clearly, by a simple comparison argument we easily see $\limsup_{t\to\yy}w(t,x)\le \alpha/\beta$ uniformly in $\mathbb{R}$. Thus it remains to study the lower limitation of $w$ as $t\to\yy$. Assume that $P$ satisfies {\bf(J2)}. For any $c<c_1\in (0,\min\{\underline{s},C_*\})$, there exist a large $\zeta>0$ and a small $\delta>0$ such that $c_1<\min\{\underline{s},c^{\zeta}_0-\delta\}$ with $c^{\zeta}_0$ defined as in Lemma \ref{l2.1}. For simplicity, let $F(w)=w(\alpha-\beta w)$. Then by letting $\ep>0$ sufficiently small, we have $F'(w)<0$ for $w\in[\alpha/\beta(1-\ep/2)(1-\ep),\alpha/\beta]$,
 \bes \label{4.3}1-\frac{\ep}{4}>(1-\frac{\ep}{2})(1-\ep) ~ {\rm and} ~ 2(1-\ep)F(\frac{\alpha}{\beta}(1-\frac{\ep}{4}))<F(\frac{\alpha}{\beta}(1-\ep)).\ees
 Moreover, we may choose $M>0$ large enough such that $\phi_{c^{\zeta}_0}(-M)>\alpha/\beta(1-\ep/4)$. Define $\ep_0=\min_{x\in[-M,0]}\{|\phi'_{c^{\zeta}_0}(x)|\}>0$ and $M_0=\max_{w\in[-\alpha/\beta,\alpha/\beta]}|F'(w)|$ and $\hat{\ep}=\delta\ep_0(1-\ep)/2M_0$. Also we can find a large $L>M$ such that $\phi_{c^{\zeta}_0}(-2L+M)\ge\alpha/\beta-\hat{\ep}$.

  Let $\underline{s}(t)=c_1t+L$ for $t\ge0$. By conclusion (1) and $c_1<\underline{s}$, there is $T>0$ such that $w(t,x)\ge(1-\ep)\alpha/\beta$ and $\min\{-s_1(t),s_2(t)\}>c_1t+L$ for $t\ge T$ and $-L\le x\le L$. Define $\tilde{w}(t,x)=w(t+T,x)$. Clearly, $\tilde{w}$ satisfies
 \bess
\left\{\begin{aligned}
&\tilde w_t\ge d\dd\int_{-\underline{s}(t)}^{\underline{s}(t)}P(x-y)\tilde w(t,y)\dy-d\tilde w+F(\tilde w), && t>0,~-\underline{s}(t) <x<\underline{s}(t),\\
&\tilde w(t,\pm \underline{s}(t))>0, && t>0,\\
&\tilde w(0,x)\ge(1-\ep){\alpha}/{\beta}, &&-L\le x\le L.
\end{aligned}\right.
 \eess
 Let \[\underline{w}(t,x)=(1-\ep)[\phi_{c^{\zeta}_0}(x-\underline{s}(t))
 +\phi_{c^{\zeta}_0}(-x-\underline{s}(t))-\alpha/\beta],\;\;\; t\ge0,\;\; x\in[-\underline{s}(t),\underline{s}(t)]. \]
We are going to show that the following inequalities hold true:
  \bes\label{4.4}
\left\{\begin{aligned}
&\underline w_t\le d\dd\int_{-\underline{s}(t)}^{\underline{s}(t)}P(x-y)\underline w(t,y)\dy-d\underline w+F(\underline w), && t>0,~-\underline{s}(t)< x<\underline{s}(t),\\
&\underline w(t,\pm\underline{s}(t))\le0, && t>0,\\
&\underline w(0,x)\le \tilde{w}(0,x), &&-L\le x\le L.
\end{aligned}\right.
 \ees
If \eqref{4.4} is proved, then $w(t+T,x)\ge \underline{w}(t,x)$ for $t>0$ and $-\underline{s}(t)< x<\underline{s}(t)$ by the comparison principle. Moreover,
 \[\max_{x\in[-ct,ct]}|\underline{w}(t,x)-(1-\ep){\alpha}/{\beta}|\le2(1-\ep)
 \big[{\alpha}/{\beta}-\phi_{c^{\zeta}_0}(ct-c_1t-L)\big]\to0 ~ {\rm as } ~t\to\yy,\]
 which, together with the arbitrariness of $\ep$, yields our desired result.

Since the second and third inequalities of \eqref{4.4} are obvious, we next focus on the first one. Direct calculations show
\bess
\underline{w}_t&=&-(1-\ep)c_1[\phi'_{c^{\zeta}_0}(x-\underline{s}(t))
+\phi'_{c^{\zeta}_0}(-x-\underline{s}(t))]\\
&\le&-(1-\ep)(c^{\zeta}_0-\delta)[\phi'_{c^{\zeta}_0}(x-\underline{s}(t))
+\phi'_{c^{\zeta}_0}(-x-\underline{s}(t))]\\
&=&(1-\ep)\delta[\phi'_{c^{\zeta}_0}(x-\underline{s}(t))
+\phi'_{c^{\zeta}_0}(-x-\underline{s}(t))]+(1-\ep)\bigg[d\int_{-\yy}^{\underline{s}(t)}\!\!
P(x-y)\phi_{c^{\zeta}_0}(y-\underline{s}(t))\dy\nonumber\\
&&-d\phi_{c^{\zeta}_0}(x-\underline{s}(t))+d\int_{-\underline{s}(t)}^{\yy}\!\!
P(x-y)\phi_{c^{\zeta}_0}(-y-\underline{s}(t))\dy-d\phi_{c^{\zeta}_0}(-x-\underline{s}(t))\bigg]\\
&&+(1-\ep)\big[F(\phi_{c^{\zeta}_0}(x-\underline{s}(t)))
+F(\phi_{c^{\zeta}_0}(-x-\underline{s}(t)))\big]\\
&=&(1-\ep)\delta[\phi'_{c^{\zeta}_0}(x-\underline{s}(t))+\phi'_{c^{\zeta}_0}(-x-\underline{s}(t))]+d\dd\int_{-\underline{s}(t)}^{\underline{s}(t)}P(x-y)\underline w(t,y)\dy-d\underline w\\
&&+(1-\ep)d\kk(\int_{-\yy}^{-\underline{s}(t)}\!\!P(x-y)[\phi_{c^{\zeta}_0}(y-\underline{s}(t))
-\frac{\alpha}{\beta}]\dy+
\int_{\underline{s}(t)}^{\yy}\!\!P(x-y)[\phi_{c^{\zeta}_0}(-y-\underline{s}(t))
-\frac{\alpha}{\beta}]\dy\rr)\\
&&+(1-\ep)\big[F(\phi_{c^{\zeta}_0}(x-\underline{s}(t)))
+F(\phi_{c^{\zeta}_0}(-x-\underline{s}(t)))\big]\\
&\le&d\dd\int_{-\underline{s}(t)}^{\underline{s}(t)}P(x-y)\underline w(t,y)\dy-d\underline w+A(t,x),
\eess
where
\bess A(t,x)=(1-\ep)\big\{\delta[\phi'_{c^{\zeta}_0}(x-\underline{s}(t))
+\phi'_{c^{\zeta}_0}(-x-\underline{s}(t))]
+F(\phi_{c^{\zeta}_0}(x-\underline{s}(t)))
+F(\phi_{c^{\zeta}_0}(-x-\underline{s}(t)))\big\}.
\eess
We next verify $A\le F(\underline{w})$ for $t\ge0$ and $x\in[-\underline{s}(t),\underline{s}(t)]$. Firstly, we deal with the case $x\in[\underline{s}(t)-M,\underline{s}(t)]$. For such $(t,x)$, from our choice of $L$ we have $0>\phi_{c^{\zeta}_0}(-x-\underline{s}(t))-\alpha/\beta\ge-\hat{\ep}$.
It then follows from the mean value theorem that
\bess
&F(\underline{w})\ge F((1-\ep)\phi_{c^{\zeta}_0}(x-\underline{s}(t)))-M_0(1-\ep)\hat{\ep},\\
&F(\phi_{c^{\zeta}_0}(-x-\underline{s}(t)))=F(\phi_{c^{\zeta}_0}(-x-\underline{s}(t)))-F(\frac{\alpha}{\beta})\le M_0\hat{\ep}.
\eess
Therefore,
\bess
A(t,x)-F(\underline{w})&\le&-(1-\ep)\delta \ep_0+(1-\ep)\left[F(\phi_{c^{\zeta}_0}(x-\underline{s}(t)))+M_0\hat{\ep}\right]\\
&&-F((1-\ep)\phi_{c^{\zeta}_0}(x-\underline{s}(t)))+M_0(1-\ep)\hat{\ep}\\
&\le&-(1-\ep)\delta \ep_0+2M_0(1-\ep)\hat{\ep}<0.
\eess
Moreover we have $A(t,x)\le F(\underline{w})$ for $t\ge0$ and $x\in[-\underline{s}(t),-\underline{s}(t)+M]$ since $A$ and $\underline{w}$ are both even in $x$. It then remains to check the case $x\in[-\underline{s}(t)+M,\underline{s}(t)-M]$. Clearly, in this case we have \[\underline{w}(t,x)\in[\frac{\alpha}{\beta}(1-\ep)(1-\frac{\ep}{2}),(1-\ep)\frac{\alpha}{\beta}].\]
So, by \eqref{4.3}, we have
\[A(t,x)-F(\underline{w})\le(1-\ep)2F(\frac{\alpha}{\beta}(1-\frac{\ep}{4}))-F(\frac{\alpha}{\beta}(1-\ep))<0.\]
Consequently, we prove \eqref{4.4}. The assertion with $P$ satisfying {\bf(J2)} follows.

As for the case $P$ violating {\bf(J2)}, we can argue as above to derive the conclusion. In fact, since $\lim_{\zeta\to\yy}c^{\zeta}_0=\yy$, for any $c\in(0,\underline{s})$ we can choose $\zeta>0$ large sufficiently such that $c<c^{\zeta}_0$. Then a similar lower solution can be constructed to verify this case, and the details are omitted.

(3) We first consider the case $\underline{s}\in(0,\yy)$. Define $P_n$ as in the proof of Lemma \ref{l2.1}. Let $w_n$ be a solution of \eqref{4.1} with $P$ replaced by $P_n$, and $C_n$ be the minimal speed of problem
\bess\left\{\begin{array}{lll}
 \hat d_n\dd\int_{-\yy}^{\yy}\hat P_n(x-y)\phi(y)\dy-\hat d_n\phi+c\phi'+\phi(\hat{d}_n-d+\alpha-\beta \phi)=0, \quad -\yy<x<\yy,\\[1mm]
\phi(-\yy)=(\hat{d}_n-d+\alpha)/{\beta},\ \ \phi(\yy)=0, \ \ \ \phi'(x)\le0,
 \end{array}\right.
 \eess
 where $n$ is large sufficiently such that $\hat{d}_n-d+\alpha>0$.
 Obviously, from a comparison consideration we have $w(t,x)\ge w_n(t,x)$ for $t\ge0$ and $s_1(t)\le x\le s_2(t)$. Since $P$ does not satisfy {\bf(J1)}, it is easy to see (\cite{DLZ}) that $\lim_{n\to\yy}C_n=\yy$. Thus for any $0<c<\underline{s}$, we can choose $N$ large enough such that $c<\min\{\underline{s},C_n\}$. Then by conclusion (2) we have for any large $n$, $\liminf_{t\to\yy}w_n(t,x)\ge (\hat{d}_n-d+\alpha)/\beta$ uniformly in $[-ct,ct]$, and so $\liminf_{t\to\yy}w(t,x)\ge (\hat{d}_n-d+\alpha)/\beta$ uniformly in $[-ct,ct]$. Sending $n\to\yy$, we obtain $\liminf_{t\to\yy}w(t,x)\ge\alpha/\beta$ uniformly in $[-ct,ct]$. Together with $\limsup_{t\to\yy}w(t,x)\le \alpha/\beta$ uniformly in $\mathbb{R}$, we finish the proof of this case.

 When $\underline{s}=\yy$, for any $0<c<c_1<\yy$ there is $T>0$ such that $\min\{-s_1(t),s_2(t)\}>c_1t+s_0:=\tilde{s}(t)$ for $t\ge T$. Define $\tilde{w}(t,x)=w(t+T,x)$ for $t\ge0$ and $x\in[-\tilde{s}(t),\tilde{s}(t)]$. Then $\tilde{w}(t,x)$ satisfies
  \bess
\left\{\begin{aligned}
&\tilde w_t\ge d\dd\int_{-\tilde{s}(t)}^{\tilde{s}(t)}P(x-y)\tilde w(t,y)\dy-d\tilde w+\tilde w(\alpha-\beta \tilde w), && t>0,~-\tilde{s}(t) <x<\tilde{s}(t),\\
&\tilde w(t,\pm \tilde{s}(t))>0, && t>0,\\
&\tilde w(0,x)>0, &&-s_0\le x\le s_0.
\end{aligned}\right.
 \eess
 Let $\underline{w}$ be a solution of
   \bess
\left\{\begin{aligned}
&\underline w_t=d\dd\int_{-\tilde{s}(t)}^{\tilde{s}(t)}P(x-y)\underline w(t,y)\dy-d\underline w+\underline w(\alpha-\beta \underline w), && t>0,~-\tilde{s}(t) <x<\tilde{s}(t),\\
&\underline w(t,\pm \tilde{s}(t))=0, && t>0,\\
&\underline w(0,x)=\underline{w}_0(x)\le \tilde{w}(0,x), &&-s_0\le x\le s_0.
\end{aligned}\right.
 \eess
 By the conclusion of the case $\underline{s}\in(0,\yy)$ and comparison principle, we have $\liminf_{t\to\yy}w(t,x)\ge \alpha/\beta$ uniformly in $[-ct,ct]$. So we get the conclusion (3).

 (4) For any $\theta_1,\theta_2\in(1,\lambda)$, we choose $\theta\in(\max\{\theta_1,\theta_2\},\lambda)$. For small $\ep>0$, define
 \[\underline{s}(t)=(K_1t+K_2)^{\theta}, ~ ~ \underline{w}(t,x)=K_{\ep}(\underline{s}(t)-|x|)/\underline{s}(t)\]
 with $K_{\ep}=\alpha/\beta-\ep$, $K_1$ and $K_2$ to be determined later.

  We next show that there exist adequately small $\ep, K_1>0$ and large $K_2,T_0>0$ such that
    \bes\label{4.5}
\left\{\begin{aligned}
&\underline w_t\le d\dd\int_{-\underline{s}(t)}^{\underline{s}(t)}\!\!P(x-y)\underline w(t,y)\dy-d\underline w+ \underline w(\alpha-\beta\underline{w}), && t>0,~x\in(-\underline{s}(t),\underline{s}(t)),\\
&\underline w(t,\pm \underline{s}(t))=0,&& t>0,\\
&\underline w(0,x)\le w(T,x),&& T\ge T_0, ~ x\in[-K_2^{\theta},K_2^{\theta}].
\end{aligned}\right.
\ees
If \eqref{4.5} is obtained, for such $K_1$ and $K_2$, by our assumptions there is $T_1>T_0$ such that $\min\{-s_1(t),\\ \nonumber s_2(t)\}>\underline{s}(t)$ for $t\ge T_1$. Define $\tilde{w}(t,x)=w(t+T_1,x)$ for $t\ge0$ and $x\in [-\underline{s}(t),\underline{s}(t)]$. Then
  \bess
\left\{\begin{aligned}
&\tilde w_t\ge d\dd\int_{-\underline{s}(t)}^{\underline{s}(t)}\!\!P(x-y)\tilde w(t,y)\dy-d\tilde w+\tilde w(\alpha-\beta \tilde w), && t>0,~-\underline{s}(t) <x<\underline{s}(t),\\
&\tilde w(t,\pm \underline{s}(t))>0, && t>0,\\
&\tilde w(0,x)=w(T_1,x)\ge\underline w(0,x), &&-K_2^{\theta}\le x\le K_2^{\theta}.
\end{aligned}\right.
 \eess
 Employing a comparison method, we have $w(t+T_1,x)=\tilde{w}(t,x)\ge\underline{w}(t,x)$ for $t\ge0$ and $x\in [-\underline{s}(t),\underline{s}(t)]$. In addition, it is easy to see that $\lim_{t\to\yy}\underline{w}(t,x)=K_{\ep}$ uniformly in $[-t^{\theta_1},t^{\theta_2}]$. Our assertion then follows from the arbitrariness of $\ep$.

 Now we are in the position to verify \eqref{4.5}.
 We first claim that there is a positive constant $\tilde{C}_1$ depending only on $P$ and $\gamma$ such that
 \[\int_{-\underline{s}(t)}^{\underline{s}(t)}P(x-y)\underline w(t,y)\dy\ge K_{\ep}\tilde{C}_1\underline{s}^{1-\gamma}(t).\]
 For $x\in[\underline{s}(t)/4,\underline{s}(t)]$, we have that if $K_2$ is large enough,
 \bess
 \dd\int_{-\underline{s}(t)}^{\underline{s}(t)}P(x-y)\underline w(t,y)\dy&=&\dd\int_{-\underline{s}(t)-x}^{\underline{s}(t)-x}P(y)\underline w(t,x+y)\dy\\
 &\ge&\dd\int_{-\underline{s}(t)/4}^{-\underline{s}(t)/8}P(y)\underline w(t,x+y)\dy\\
 &\ge&\frac{K_{\ep}C_1}{\underline{s}(t)}\dd\int_{-\underline{s}(t)/4}^{-\underline{s}(t)/8}
 (-y)^{1-\gamma}\dy\ge\tilde{C}_1K_{\ep}\underline{s}^{1-\gamma}(t).
 \eess
 For $x\in[0,\underline{s}(t)/4]$, by following the similar method as above we see
 \bess
 &&\dd\int_{-\underline{s}(t)}^{\underline{s}(t)}P(x-y)\underline w(t,y)\dy\ge K_{\ep}\dd\int_{\underline{s}(t)/8}^{\underline{s}(t)/4}P(y)\left(\frac{\underline{s}(t)-x-y}{\underline{s}(t)}\right)\dy\ge\tilde{C}_1K_{\ep}\underline{s}^{1-\gamma}(t).
 \eess
 Then noting that $P$ and $\underline{w}$ are both symmetric in $x$, our claim holds true.

 Moreover, from \cite[Lemma 6.3]{DN21} and \eqref{2.9} we have that for small $\ep>0$ and large $K_2>0$,
 \bess
 &&d\dd\int_{-\underline{s}(t)}^{\underline{s}(t)}P(x-y)\underline w(t,y)\dy-d\underline w+ \underline w(\alpha-\beta\underline{w})\\
 &&\quad\ge(d-\frac{\alpha\ep}{4})\dd\int_{-\underline{s}(t)}^{\underline{s}(t)}P(x-y)\underline w(t,y)\dy-d\underline w+\frac{\alpha\ep}{2}\underline{w}+\frac{\alpha\ep}{4}\dd\int_{-\underline{s}(t)}^{\underline{s}(t)}P(x-y)\underline w(t,y)\dy\\
 &&\quad \ge \left[(d-\frac{\alpha\ep}{4})(1-\ep^2)-d+\frac{\alpha\ep}{2}\right]\underline{w}+\frac{\alpha\ep}{4}\dd\int_{-\underline{s}(t)}^{\underline{s}(t)}P(x-y)\underline w(t,y)\dy\\
 &&\quad \ge\frac{\alpha\ep}{4}\dd\int_{-\underline{s}(t)}^{\underline{s}(t)}P(x-y)\underline w(t,y)\dy\ge\frac{\alpha\ep\tilde{C}_1K_{\ep}\underline{s}^{1-\gamma}(t)}{4}.
 \eess
 Since $\theta<1/(\gamma-1)$, we have that for $t>0$ and $x\in(-\underline{s}(t),\underline{s}(t))$,
 \[\underline{w}_t=\frac{K_{\ep}|x|\underline{s}'(t)}{\underline{s}^2(t)}\le \frac{K_{\ep}\theta K_1}{K_1t+K_2}\le\frac{K_{\ep}\theta K_1}{(K_1t+K_2)^{\theta(\gamma-1)}}=K_{\ep}\theta K_1\underline{s}^{1-\gamma}(t)\le\frac{\alpha\ep\tilde{C}_1K_{\ep}\underline{s}^{1-\gamma}(t)}{4}\]
 with $K_1<\frac{\alpha\ep\tilde{C}_1}{4\theta}$. So we have proved the first inequality of \eqref{4.5}. The second identity is obvious. For such $K_2$ and $\ep$ as chosen above, from conclusion (1) there is $T_0>0$ such that $w(T,x)\ge{\alpha}/{\beta}-\ep\ge\underline{w}(0,x)$ for $T\ge T_0$ and $x\in[-K_2^{\theta},K_2^{\theta}]$.
 So the proof of assertion (4) is complete.

 (5) For any $\omega_1,\omega_2\in(0,\eta)$, let $\omega\in(\max\{\omega_1,\omega_2\},\eta)$. For small $\ep>0$, define
 \[\underline{s}(t)=K_1(t+K_2)[\ln(t+K_2)]^{\omega}, ~ ~ \underline{w}(t,x)=K_{\ep}\min\{1,(\underline{s}(t)-|x|)/{(t+K_2)^{1/2}}\}\]
 with $K_{\ep}=\alpha/\beta-\ep$, $K_1$ and $K_2$ to be determined later.

 Similarly, we next prove that there are small $\ep,K_1>0$ and large $K_2,T_0>0$ such that
    \bes\label{4.6}
\left\{\begin{array}{ll}
\underline w_t\le d\dd\int_{-\underline{s}(t)}^{\underline{s}(t)}P(x-y)\underline w(t,y)\dy-d\underline w+ \underline w(\alpha-\beta\underline{w}),\\
 \hspace{35mm}t>0,~x\in(-\underline{s}(t),\underline{s}(t))\setminus\left\{\pm\left[\underline{s}(t)-(t+K_2)^{1/2}\right]\right\},\\
\underline w(t,\pm \underline{s}(t))=0,\hspace{8mm} t>0,\\
\underline w(0,x)\le w(T,x),\quad
 T\ge T_0, \quad x\in[-\underline{s}(0),\underline{s}(0)].
\end{array}\right.
\ees
If \eqref{4.6} is proved, for $K_1$ and $K_2$ as chosen above, there is $T_1>T_0$ such that $\min\{-s_1(t),s_2(t)\}\\ \nonumber>\underline{s}(t)$ for $t\ge T_1$. Define $\tilde{w}(t,x)=w(t+T_1,x)$ for $t\ge0$ and $x\in [-\underline{s}(t),\underline{s}(t)]$. Clearly, $\tilde{w}$ satisfies
  \bess
\left\{\begin{aligned}
&\tilde w_t\ge d\dd\int_{-\underline{s}(t)}^{\underline{s}(t)}P(x-y)\tilde w(t,y)\dy-d\tilde w+\tilde w(\alpha-\beta \tilde w), && t>0,~-\underline{s}(t) <x<\underline{s}(t),\\
&\tilde w(t,\pm \underline{s}(t))>0, && t>0,\\
&\tilde w(0,x)=w(T_1,x)\ge\underline w(0,x), &&-\underline{s}(0)\le x\le \underline{s}(0).
\end{aligned}\right.
 \eess
 It follows from a comparison consideration that $\tilde{w}(t,x)\ge\underline{w}(t,x)$ for $t\ge0$ and $x\in [-\underline{s}(t),\underline{s}(t)]$. Moreover, direct calculations show that $\lim_{t\to\yy}\underline{w}(t,x)=K_{\ep}$ uniformly in $[-t[\ln (t+1)]^{\omega_1},\,t[\ln (t+1)]^{\omega_2}]$. By the arbitrariness of $\ep$, we immediately obtain the desired result.

 Now we are ready to prove \eqref{4.6}.
 We first claim that when $K_2$ is large sufficiently, for $x\in[-\underline{s}(t),-\underline{s}(t)+(t+K_2)^{1/2}]\cup[\underline{s}(t)-(t+K_2)^{1/2},\underline{s}(t)]$,
 \[\int_{-\underline{s}(t)}^{\underline{s}(t)}P(x-y)\underline w(t,y)\dy\ge \frac{K_{\ep}C_1\ln(t+K_2)}{2(t+K_2)^{1/2}}.\]
 For $x\in[\underline{s}(t)-3(t+K_2)^{1/2}/4,\underline{s}(t)]$, we have that if $K_2$ is large enough,
 \bess
 \dd\int_{-\underline{s}(t)}^{\underline{s}(t)}P(x-y)\underline w(t,y)\dy&\ge& K_{\ep}\dd\int_{\underline{s}(t)-(t+K_2)^{1/2}}^{\underline{s}(t)}P(x-y)
 \frac{\underline{s}(t)-y}{(t+K_2)^{1/2}}\dy\\
 &\ge&\frac{K_{\ep}C_1}{(t+K_2)^{1/2}}\dd\int_{-(t+K_2)^{1/2}/4}^{-(t+K_2)^{1/4}/4}(-y)^{-1}\dy=\frac{K_{\ep}C_1\ln(t+K_2)}{4(t+K_2)^{1/2}}.
 \eess
For $x\in[\underline{s}(t)-(t+K_2)^{1/2},\underline{s}(t)-3(t+K_2)^{1/2}/4]$, we similarly get
 \bess
 \dd\int_{-\underline{s}(t)}^{\underline{s}(t)}P(x-y)\underline w(t,y)\dy&\ge& \frac{K_{\ep}}{(t+K_2)^{1/2}}\dd\int_{(t+K_2)^{1/4}/4}^{(t+K_2)^{1/2}/4}P(y)
 \left(\underline{s}(t)-x-y\right)\dy\\
 &\ge&\frac{K_{\ep}C_1}{(t+K_2)^{1/2}}\dd\int_{(t+K_2)^{1/4}/4}^{(t+K_2)^{1/2}/4}y^{-1}\dy=\frac{K_{\ep}C_1\ln(t+K_2)}{4(t+K_2)^{1/2}}.
 \eess
 By the symmetry of $P$ and $\underline{w}$ about $x$, our claim is proved.

In view of \cite[Lemma 6.5]{DN21} and \eqref{2.9}, by choosing $\ep$ small and $K_2$ large enough we have for $x\in[-\underline{s}(t),-\underline{s}(t)+(t+K_2)^{1/2}]\cup[\underline{s}(t)-(t+K_2)^{1/2},\underline{s}(t)]$,
 \bess
 &&d\dd\int_{-\underline{s}(t)}^{\underline{s}(t)}P(x-y)\underline w(t,y)\dy-d\underline w+ \underline w(\alpha-\beta\underline{w})\\
 &\ge&(d-\frac{\alpha\ep}{4})\dd\int_{-\underline{s}(t)}^{\underline{s}(t)}P(x-y)\underline w(t,y)\dy-d\underline w+\frac{\alpha\ep}{2}\underline{w}+\frac{\alpha\ep}{4}\dd\int_{-\underline{s}(t)}^{\underline{s}(t)}P(x-y)\underline w(t,y)\dy\\
 &\ge& \left[(d-\frac{\alpha\ep}{4})(1-\ep^2)-d+\frac{\alpha\ep}{2}\right]\underline{w}+\frac{\alpha\ep}{4}\dd\int_{-\underline{s}(t)}^{\underline{s}(t)}P(x-y)\underline w(t,y)\dy\\
 &\ge&\frac{\alpha\ep}{4}\dd\int_{-\underline{s}(t)}^{\underline{s}(t)}P(x-y)\underline w(t,y)\dy\ge\frac{K_{\ep}\alpha \ep C_1\ln(t+K_2)}{16(t+K_2)^{1/2}}.
 \eess
 On the other hand, $x\in(-\underline{s}(t),-\underline{s}(t)+(t+K_2)^{1/2})\cup(\underline{s}(t)-(t+K_2)^{1/2},\underline{s}(t))$,
 \[\underline{w}_t\le\frac{\ln(t+K_2)}{(t+K_2)^{1/2}}\left[K_1K_{\ep}+K_1\omega K_{\ep}\right]\le\frac{K_{\ep}\alpha \ep C_1\ln(t+K_2)}{16(t+K_2)^{1/2}}\]
 provided that $K_1\le\frac{C_1\alpha\ep}{16(1+\omega)}$.
 For $x\in(-\underline{s}(t)+(t+K_2)^{1/2},\underline{s}(t)-(t+K_2)^{1/2})$, we have
 \bess
 &&d\dd\int_{-\underline{s}(t)}^{\underline{s}(t)}P(x-y)\underline w(t,y)\dy-d\underline w+ \underline w(\alpha-\beta\underline{w}) \ge\frac{\alpha\ep}{4}\dd\int_{-\underline{s}(t)}^{\underline{s}(t)}P(x-y)\underline w(t,y)\dy\ge0=\underline{w}_t.
 \eess
 So we have proved the first inequality of \eqref{4.6}. Obviously, the second identity holds true. For such $K_1,K_2$ and $\ep$ as chosen above, by conclusion (1) there is $T_0>0$ such that $w(T,x)\ge{\alpha}/{\beta}-\ep\ge\underline{w}(0,x)$ for $T\ge T_0$ and $x\in[-K_1K_2(\ln K_2)^{\omega},K_1K_2(\ln K_2)^{\omega}]$.
 This completes the proof of conclusion (5). Consequently, we finish the proof of all assertions.
 \end{proof}
With the help of Lemma \ref{l4.1}, it is not hard to prove the following theorem by arguing as in the proof of Theorem \ref{t2.3}. The details are omitted here.
 \begin{theorem} \label{t4.3}If $h_{\yy}-g_\yy=\yy$, then we have the following conclusions:

 \sk{\rm(1)}\, $\lim_{t\to\yy}u(t,x)=u^*$ and $\lim_{t\to\yy}v(t,x)=v^*$ locally uniformly in $\mathbb{R}$.

 \sk{\rm(2)}\, If $J_1,J_2$ satisfy {\bf(J2)} and {\bf(J1)} respectively, for $c_*$ and $c_0$ defined as in Theorem \ref{t2.3} we get
 \[\lim_{t\to\yy}\max_{x\in[-ct,\,ct]}\left\{|u(t,x)-u^*|+|v(t,x)-v^*|\right\}=0 ~ {\rm for ~ any ~ } c\in(0,\min\{c_*,c_0\}).\]

 \sk{\rm(3)}\, If $J_1$ violates {\bf(J2)} and $J_2$ satisfies {\bf(J1)}, then
 \[\lim_{t\to\yy}\max_{x\in[-ct,\,ct]}\left\{|u(t,x)-u^*|+|v(t,x)-v^*|\right\}=0 ~ {\rm for ~ any ~ } c\in(0,c_0).\]

  \sk{\rm(4)}\, If {\bf(J2)} is true for $J_1$ and $J_2$ violates {\bf(J1)}, then
 \[\lim_{t\to\yy}\max_{x\in[-ct,\,ct]}\left\{|u(t,x)-u^*|+|v(t,x)-v^*|\right\}=0 ~ {\rm for ~ any ~ } c\in(0,c_*).\]

  \sk{\rm(5)}\, If $J_1$ violates {\bf(J2)} and $J_2$ does not satisfy {\bf(J1)}, then
 \[\lim_{t\to\yy}\max_{x\in[-ct,\,ct]}\left\{|u(t,x)-u^*|+|v(t,x)-v^*|\right\}=0 ~ {\rm for ~ any ~ } c>0.\]

 \sk{\rm(6)}\, If there is $C>0$ such that $\min\{J_1(x),J_2(x)\}\ge C|x|^{-\gamma}$ for $|x|\gg1$ and $\gamma\in(1,2)$, then
 \[\lim_{t\to\yy}\max_{x\in[-t^{\theta_1},\,t^{\theta_2}]}\left\{|u(t,x)-u^*|+|v(t,x)-v^*|\right\}=0 ~ {\rm for ~ any ~ } 1<\theta_1,\theta_2<1/(\gamma-1).\]

 \sk{\rm(7)}\, If there is $C>0$ such that $\min\{J_1(x),J_2(x)\}\ge C|x|^{-2}$ for $|x|\gg1$, then
 \[\lim_{t\to\yy}\max_{x\in[-t[\ln (t+1)]^{\omega_1},\,t[\ln (t+1)]^{\omega_2}]}\left\{|u(t,x)-u^*|+|v(t,x)-v^*|\right\}=0 ~ {\rm for ~ any ~ }0<\omega_1,\omega_2<1.\]
 \end{theorem}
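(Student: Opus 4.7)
The plan is to mimic the structure of the proof of Theorem \ref{t2.3}, using the two-sided Lemma \ref{l4.1} in place of Lemma \ref{l2.1}, and iterating between $u$ and $v$ to upgrade the local uniform convergence in (1) into the speed-type estimates in (2)--(7). First I would establish the upper bounds $\limsup_{t\to\yy} u\le u^*$ and $\limsup_{t\to\yy} v\le v^*$ uniformly in $\mathbb{R}$ by comparing $(u,v)$ with the spatially homogeneous kinetic system $U'=f_1(U,V)$, $V'=f_2(U,V)$ with initial data $(K_1,K_2)$; a phase-plane argument for this cooperative ODE gives $(U,V)\to(u^*,v^*)$. Conclusion (1) then follows by comparison with a single-species nonlocal free boundary problem on a bounded moving interval, along the lines of \cite[Theorem 4.11]{LSW}.

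Next I need growth estimates for $-g(t)$ and $h(t)$. Since $f_2(u,v)\ge r_2 v(1-2v)$, a comparison argument shows that $(v,g,h)$ dominates the solution of the Fisher--KPP nonlocal double free boundary problem of \cite{CDLL,DLZ,DN21,DN213}. When $J_2$ satisfies {\bf(J1)}, this yields $\liminf_{t\to\yy} h(t)/t\ge c_0$ and $\liminf_{t\to\yy} (-g(t))/t\ge c_0$; when {\bf(J1)} fails, both $\liminf$'s equal $+\yy$; in the power-law cases one obtains $\min\{-g(t),h(t)\}\ge C t^{1/(\gamma-1)}$ or $\ge Ct\ln t$ for $t\gg 1$, using the sharp lower bounds of \cite{DN21,DN213}.

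For part (2), I would pick a strictly decreasing sequence $c<c_1<c_2<\cdots$ in $(0,\min\{c_*,c_0\})$. The bound $f_1\ge r_1 u(a-2u)$ and Lemma \ref{l4.1}(2), applied with $s_1(t)=-c_1t-h_0$, $s_2(t)=c_1t+h_0$ and kernel $J_1$, give $\liminf_{t\to\yy} u\ge a/2=:\underline{u}_1$ uniformly on $[-c_1t,c_1t]$. Substituting $u\ge\underline{u}_1-\ep$ into $f_2$ and invoking Lemma \ref{l4.1}(2) on $[g(t),h(t)]\supset[-c_2t,c_2t]$ yields $\liminf_{t\to\yy} v\ge \underline{v}_1:=(1+c\underline{u}_1)/(2+c\underline{u}_1)$ uniformly on $[-c_3t,c_3t]$. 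Feeding this back into the $u$-equation produces $\underline{u}_2:=a(1+b\underline{v}_1)/(2+b\underline{v}_1)$, and iterating gives increasing sequences $\{\underline{u}_n\},\{\underline{v}_n\}$ bounded by $a,1$, with recursion $\underline{v}_n=(1+c\underline{u}_n)/(2+c\underline{u}_n)$, $\underline{u}_{n+1}=a(1+b\underline{v}_n)/(2+b\underline{v}_n)$; their limits satisfy $f_1=f_2=0$, so by the uniqueness of the positive root they equal $(u^*,v^*)$. Parts (3)--(5) are identical, using conclusions (2) or (3) of Lemma \ref{l4.1} according to which of {\bf(J1)},{\bf(J2)} hold. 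Parts (6)--(7) replace the linear speeds $c_k t$ by $t^{\theta_k}$ (resp.\ $t[\ln(t+1)]^{\omega_k}$) and invoke conclusions (4)--(5) of Lemma \ref{l4.1}, together with the power-law lower bounds on $\pm g(t), h(t)$ from step two.

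The main obstacle is verifying, at each step of the iteration, that the moving intervals on which the lower bounds hold nest correctly, so that the hypotheses of Lemma \ref{l4.1} are met on the restricted $v$-equation whose spatial domain is $[g(t),h(t)]$, not $\mathbb{R}$. This is precisely where the two-sided growth estimates for $\pm g(t),h(t)$ from step two are essential: they play the role that the one-sided bound $\liminf h(t)/t\ge c_0$ played in Theorem \ref{t2.3}. Once these ingredients are in place, the remaining argument is a direct transcription of the proof of Theorem \ref{t2.3} to the symmetric setting, which presumably explains why the authors omit the details.
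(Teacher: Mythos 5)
Your proposal is correct and follows essentially the same route the paper intends: it transcribes the iteration scheme of Theorem \ref{t2.3} to the symmetric setting, using the two-sided Lemma \ref{l4.1} in place of Lemma \ref{l2.1} and supplying the two-sided growth estimates for $\pm g(t),h(t)$ by comparison with a double free boundary Fisher--KPP subsolution obtained from $f_2(u,v)\ge r_2 v(1-2v)$. The paper omits its proof with exactly this pointer, so there is nothing substantive to compare.
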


 \subsection{Criteria for spreading and vanishing}

 Some criteria for spreading and vanishing of \eqref{4.1} will be given in this subsection. To this end, a comparison principle is first shown. We omit the proofs of them since they can be proved by employing the similar methods as in the previous arguments.
 \begin{lemma}[Comparison principle]\label{l4.2} Assume that $\bar{u},\bar{u}_t\in C([0,T]\times\mathbb{R})$, $\bar{u}\in L^{\yy}([0,T]\times\mathbb{R})$, $\bar{v},\bar{v}_t\in C([0,T]\times[\bar{g}(t),\bar{h}(t)])$ and $\bar{g}(t),\bar{h}(t)\in C^1([0,T])$. If $(\bar{u},\bar{v},\bar{g},\bar{h})$ satisfies
\bess\left\{\begin{aligned}
&\bar u_t\ge d_1\int_{-\yy}^{\yy}J_1(x-y)\bar u(t,y)\dy-d_1\bar u+f_1(\bar u,\bar v), & &t\in(0,T],~x\in\mathbb{R},\\
&\bar v_t\ge d_2\int_{\bar{g}(t)}^{\bar h(t)}J_2(x-y)\bar v(t,y)\dy-d_2\bar v+f_2(\bar u,\bar v), & &t\in(0,T],~x\in(\bar{g}(t),\bar h(t)),\\
&\bar v(t,x)\ge0,& &t\in(0,T], ~ x\notin(\bar{g}(t),\bar h(t)),\\
&\bar g'(t)\le-\mu\int_{\bar{g}(t)}^{\bar h(t)}\int_{-\yy}^{\bar{g}(t)}
J_2(x-y)\bar v(t,x)\dy\dx,& &t\in(0,T],\\
&\bar h'(t)\ge\mu\int_{\bar{g}(t)}^{\bar h(t)}\int_{\bar h(t)}^{\infty}
J_2(x-y)\bar v(t,x)\dy\dx,& &t\in(0,T],\\
&\bar u(0,x)\ge u_0(x), ~ x\in\mathbb{R}; ~ ~ \bar h(0)\ge h_0>0; ~ ~ \bar v(0,x)\ge v_0(x),& &x\in[-\bar h(0),\bar h(0)],
 \end{aligned}\right.
 \eess
 then
 \[\bar{h}(t)\ge h(t), ~ ~ \bar{g}(t)\le g(t), ~ ~ t\in[0,T]; ~ ~ ~ \bar{u}(t,x)\ge u(t,x) ~ ~ \bar{v}(t,x)\ge v(t,x), ~ (t,x)\in[0,T]\times\mathbb{R}.\]
\end{lemma}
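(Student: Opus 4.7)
The argument follows the strategy used for Lemma~\ref{l3.1} with symmetric modifications for the two-sided free boundary. First I would verify the nonnegativity $\bar{u},\bar{v}\ge0$ on the relevant domains. Since $f_1$ contains the denominator $1+b\bar{v}$, this is immediate by the maximum principle if $1+b\bar{v}>0$ on $[0,T]\times[\bar{g}(t),\bar{h}(t)]$; otherwise, using $1+b\bar{v}(0,x)\ge 1$ and continuity, pick the first time $t_\ast\in(0,T]$ at which $1+b\bar{v}$ vanishes. Applying the maximum principle to the $\bar{u}$-equation on $[0,s]\times\mathbb{R}$ for every $s<t_\ast$ produces $\bar{u}\ge0$ up to $t_\ast$, and then applying it to the $\bar{v}$-equation yields $\bar{v}\ge0$ on $[0,t_\ast]\times[\bar{g}(t),\bar{h}(t)]$, contradicting $1+b\bar{v}(t_\ast,x_\ast)=0$.

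Next, introduce a symmetric perturbation of \eqref{4.1} with $u_0^\ep=(1-\ep)u_0$, $h_0^\ep=(1-\ep)h_0$, $\mu^\ep=(1-\ep)\mu$, and a sequence $v_0^\ep\in C([-h_0^\ep,h_0^\ep])$ satisfying $v_0^\ep(\pm h_0^\ep)=0<v_0^\ep$ on $(-h_0^\ep,h_0^\ep)$ and $v_0^\ep\to v_0$ in $C([-h_0^\ep,h_0^\ep])$ as $\ep\to0$. Let $(u^\ep,v^\ep,g^\ep,h^\ep)$ be the corresponding unique solution of \eqref{4.1}. The goal is to prove the strict inequalities $\bar{h}>h^\ep$ and $\bar{g}<g^\ep$ on $[0,T]$ for every small $\ep>0$; these, together with the maximum principle on the fixed cylinder, imply $\bar{u}\ge u^\ep$ and $\bar{v}\ge v^\ep$, and sending $\ep\to0$ through the continuous dependence of solutions to \eqref{4.1} on the data finishes the proof.

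The core contradiction proceeds as follows. Both strict inequalities hold for small $t$ by continuity. If one fails, let $t_0\in(0,T]$ be the smallest time at which either $\bar{h}(t_0)=h^\ep(t_0)$ or $\bar{g}(t_0)=g^\ep(t_0)$; by symmetry it suffices to treat the first case, which gives $\bar{h}'(t_0)\le(h^\ep)'(t_0)$. Setting $w=\bar{u}-u^\ep$ and $z=\bar{v}-v^\ep$, one runs exactly the auxiliary-function argument of Lemma~\ref{l3.1}: introduce $\tilde w=w+\delta e^{kt}$ and $\tilde z=z+\delta e^{kt}$ with $\delta>0$ small and $k$ larger than twice the sum of $d_1,d_2$ and the $L^{\yy}$-norms of the linearization coefficients $c_{ij}$ coming from $f_1,f_2$; a first-touch argument shows $\tilde w,\tilde z\ge0$ on $[0,t_0]$, and sending $\delta\to0$ followed by the strong maximum principle gives $w>0$ on $[0,t_0]\times\mathbb{R}$ and $z>0$ on $[0,t_0]\times(g^\ep(t),h^\ep(t))$. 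Combining $\bar{v}(t_0,\cdot)>v^\ep(t_0,\cdot)$ on the interior with $\mu>\mu^\ep$ and $\bar h(t_0)=h^\ep(t_0)$ in the Stefan condition then forces $\bar{h}'(t_0)>(h^\ep)'(t_0)$, contradicting the opposite inequality.

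The step I expect to be the main obstacle is the first-touch argument for $(\tilde w,\tilde z)$, because $z$ lives on the moving domain $[0,t_0]\times(g^\ep(t),h^\ep(t))$ while $w$ lives on $[0,t_0]\times\mathbb{R}$, so the coupling through the Lipschitz coefficients $c_{ij}$ has to be handled carefully in order to rule out both cases of a first zero (one for $\tilde w$ in the exterior, one for $\tilde z$ in the interior) simultaneously. This is, however, entirely parallel to the corresponding step in Lemma~\ref{l3.1}; the only genuinely new features here are the symmetric perturbation of both boundaries and the need to track whichever of $\bar h$ or $\bar g$ first collides with its perturbed counterpart, both of which amount to straightforward bookkeeping.
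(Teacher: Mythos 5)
Your proposal is correct and follows essentially the same strategy as the paper, which omits the proof of Lemma~\ref{l4.2} and refers to the argument for Lemma~\ref{l3.1}: nonnegativity via a first-vanishing-time contradiction to control the denominator $1+b\bar v$, the $(1-\ep)$-perturbation of the data, the auxiliary functions $\tilde w=w+\delta e^{kt}$, $\tilde z=z+\delta e^{kt}$ with $k$ large, and the Stefan-condition contradiction at the first touching time, concluded by continuous dependence as $\ep\to0$. Your adaptation for the two-sided free boundary — taking $t_0$ to be the first time at which \emph{either} $\bar h$ meets $h^\ep$ or $\bar g$ meets $g^\ep$, and treating the two cases symmetrically — is exactly the ``obvious modification'' the paper intends.
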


The above comparison principle implies that solution $(u,v,g,h)$ of \eqref{4.1} is increasing in $\mu$. By \cite[Proposition 3.4]{CDLL}, there is a unique $\tilde \ell>0$ such that $\lambda_p(\mathcal{\tilde L}_{(l_1,l_2)}+r_2)=0$ if $l_2-l_1=\tilde{\ell}$ and $\lambda_p(\mathcal{\tilde L}_{(l_1,l_2)}+r_2)(l_2-l_1-\tilde{\ell})>0$ if $l_2-l_1\neq\tilde{\ell}$.

\begin{theorem}\label{t4.4}Let $(u,v,g,h)$ be a solution of \eqref{4.1}. Then the following conclusions hold true:

\sk{\rm(1)}\, If $r_2\ge d_2$, then spreading happens.

\sk{\rm(2)}\, If $r_2<d_2$ and $2h_0\ge \tilde \ell$, then spreading happens. Moreover, if $h_{\yy}-g_\yy<\yy$, $h_{\yy}-g_\yy\le \tilde \ell$.

   \sk{\rm(3)}\,If $r_2<d_2$ and $2h_0<\tilde \ell$, then there exists a unique $\tilde \mu>0$ such that spreading happens when $\mu>\tilde \mu$, and vanishing occurs when $0<\mu\le\tilde \mu$.
\end{theorem}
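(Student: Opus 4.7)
The plan is to mirror the proof of Theorem \ref{t3.1}, with the two-sided operator $\tilde{\mathcal{L}}_{(l_1,l_2)}+r_2$ and its threshold length $\tilde{\ell}$ (furnished by \cite[Proposition 3.4]{CDLL}) playing the roles of $\mathcal{L}_{(0,l)}+r_2$ and $\ell^*$, and with the scalar two-sided nonlocal free boundary problem of \cite{CDLL} replacing the single-boundary model \eqref{1.4}. Throughout, the key input is Theorem \ref{t4.2}: vanishing forces $\lambda_p(\tilde{\mathcal{L}}_{(g_\yy,h_\yy)}+r_2)\le 0$.

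For conclusions (1) and (2) I would argue by contradiction, assuming $h_\yy-g_\yy<\yy$. If $r_2\ge d_2$, then \cite[Proposition 3.4]{CDLL} rules out any finite threshold length and yields $\lambda_p(\tilde{\mathcal{L}}_{(g_\yy,h_\yy)}+r_2)>0$, contradicting Theorem \ref{t4.2}. If instead $r_2<d_2$ and $2h_0\ge\tilde{\ell}$, monotonicity of $-g$ and $h$ gives $h_\yy-g_\yy\ge 2h_0\ge\tilde{\ell}$; together with the characterization of $\tilde{\ell}$ this forces $h_\yy-g_\yy=2h_0=\tilde{\ell}$, so both free boundaries are stationary. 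This contradicts $-g'(0),h'(0)>0$, which is evident from the positivity of $v_0$ on $(-h_0,h_0)$ and the positivity of $J_2$ near the origin. The ``moreover'' clause of (2) is immediate from the same characterization.

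For conclusion (3) I would follow the blueprint of Theorem \ref{t3.1}(3). Using $r_2v(1-2v)\le f_2(u,v)\le r_2v(1-v)$ and Lemma \ref{l4.2}, I sandwich $(v,g,h)$ between the unique solutions $(\bar V,\bar g,\bar h)$ and $(\underline V,\underline g,\underline h)$ of the two-sided scalar nonlocal free boundary problem with reaction $r_2v(1-v)$ and $r_2v(1-2v)$, respectively (both with the same $\mu$). The sharp spreading-vanishing dichotomy for such scalar problems from \cite{CDLL} provides $\underline{\mu}>0$ such that $(\bar V,\bar g,\bar h)$ vanishes when $\mu<\underline{\mu}$, hence $(v,g,h)$ vanishes, and $\overline{\mu}>0$ such that $(\underline V,\underline g,\underline h)$ spreads when $\mu>\overline{\mu}$, hence $(v,g,h)$ spreads. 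Setting $\tilde{\mu}=\inf\{\mu>0:\text{spreading occurs}\}\in(0,\yy)$ and applying the monotonicity of $(v_\mu,g_\mu,h_\mu)$ in $\mu$ from Lemma \ref{l4.2} gives the dichotomy for $\mu\ne\tilde{\mu}$.

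The critical case $\mu=\tilde{\mu}$ is the main technical point. I would argue by contradiction: if spreading held at $\tilde{\mu}$, pick $T>0$ with $h_{\tilde{\mu}}(T)-g_{\tilde{\mu}}(T)>\tilde{\ell}$; by continuous dependence of the solution of \eqref{4.1} on $\mu$ (a by-product of the well-posedness Theorem \ref{t4.1}), one can find $\mu<\tilde{\mu}$ arbitrarily close with $h_\mu(T)-g_\mu(T)>\tilde{\ell}$, and then conclusion (2) applied at time $T$ (with $2h_0$ replaced by $h_\mu(T)-g_\mu(T)$) forces spreading at that $\mu$, contradicting the definition of $\tilde{\mu}$. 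Hence vanishing must occur at $\mu=\tilde{\mu}$.
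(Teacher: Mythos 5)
Your proposal is correct and matches the paper's intended route: the paper omits the proof of Theorem \ref{t4.4}, stating only that it follows ``by employing the similar methods as in the previous arguments,'' i.e.\ by mirroring Theorem \ref{t3.1} with the two-sided operator $\mathcal{\tilde L}_{(l_1,l_2)}+r_2$ and its threshold $\tilde\ell$ (from \cite[Proposition 3.4]{CDLL}) replacing $\mathcal{L}_{(0,l)}+r_2$ and $\ell^*$, with the two-sided scalar problem of \cite{CDLL} playing the role of \eqref{1.4}, and with Theorem \ref{t4.2} providing the necessary condition $\lambda_p(\mathcal{\tilde L}_{(g_\yy,h_\yy)}+r_2)\le 0$ under vanishing. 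Your treatment of parts (1)--(2) by contradiction via the eigenvalue characterization, the sandwiching $r_2v(1-2v)\le f_2(u,v)\le r_2v(1-v)$, the definition of $\tilde\mu$, and the continuous-dependence argument for the critical case $\mu=\tilde\mu$ are precisely the steps the paper's own proof of Theorem \ref{t3.1}(3) uses.

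Two small presentational points. In (2) the chain $h_\yy-g_\yy\ge 2h_0\ge\tilde\ell$ together with $h_\yy-g_\yy\le\tilde\ell$ already gives a direct contradiction when $2h_0>\tilde\ell$; the stationarity argument ($-g'(0),h'(0)>0$ via $v_0>0$ and $J_2(0)>0$) is only needed for the borderline $2h_0=\tilde\ell$. And when sandwiching via Lemma \ref{l4.2} you should either supply a companion $\bar u$ (e.g.\ $\bar u\equiv K_1$) to form a full supersolution quadruple, or, as the paper does, simply invoke the scalar comparison principle for the \cite{CDLL} problem, noting that $v$ is a sub/supersolution of the scalar problem by the sign of $f_2(u,v)-r_2v(1-v)$ and $f_2(u,v)-r_2v(1-2v)$. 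Neither affects the substance.
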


\end{document}